\def\la{\lambda}
\def\al{\alpha}
\def\b{\beta}
\def\N{\mathbb{N}}
\def\Q{\mathbb{Q}}
\numberwithin{equation}{section}
\newtheorem{theo}{Theorem}[section]
\newtheorem{coro}[theo]{Corollary}
\newtheorem{lemm}[theo]{Lemma}
\newtheorem{prop}[theo]{Proposition}
\theoremstyle{definition}
\newtheorem{defi}[theo]{Definition}
\newtheorem{remark}[theo]{Remark}
\numberwithin{equation}{theo}
\def\al{\alpha}
\def\b{\beta}
\def\l{\lambda}
\def\Z{\mathbb{Z}}
\def\j{{\jmath}}
\def\ZZ{\mathcal{Z}}
\def\bfU{{\mathbf U}}
\def\up{{\upsilon}}
\def\wK{{\widetilde K}}
\def\fS{{\mathfrak S}}
\def\GL{{\text{\rm GL}}}
\def\Sp{{\text{\rm Sp}}}
\def\O{{\text{\rm O}}}
\def\End{{\text{\rm End}}}
\def\sH{{\mathcal H}}
\def\bsH{{\boldsymbol{\mathcal H}}}
\def\sZ{{\mathcal Z}}
\def\sA{{\mathcal A}}
\def\sD{{\mathcal D}}
\def\sS{{\mathcal S}}
\def\sT{{\mathcal T}}
\def\fkm{{\mathfrak m}}
\def\ro{{\text{\rm ro}}}
\def\row{{\text{\rm row}}}
\def\co{{\text{\rm co}}}
\def\diag{{\text{\rm diag}}}
\def\wla{{\widetilde\la}}
\def\wmu{{\widetilde \mu}}
\def\La{{\Lambda}}
\def\bfj{{\mathbf j}}
\def\bsS{{\boldsymbol{\sS}}}
\def\SnrZ{{\sS_\sZ^\jmath(n,r)}}
\def\SnrA{{\sS_\sA^\jmath(n,r)}}
\def\SerrZ{{\sS^\imath(r,r)}}
\def\Xinr{{\Xi_{2n+1,2r+1}}}
\def\Xin{{\Xi_{2n+1}}}
\def\Xinl{{\Xi_{2n+1}^{0\diag}}}
\def\Xinri{{\Xi_{2n,2r}^\dagger}}
\def\Xini{{\Xi_{2n}^\dagger}}
\def\Xinli{{\Xi_{2n}^{\dagger\,0\diag}}}
\def\fkm{{\mathfrak m}}
\def\fkmAl{{\mathfrak m^{A,\mathbf 0}}}
\def\scmAl{{\textsc{m}^{A,\mathbf 0}}}
\def\bfl{{\mathbf 0}}
\def\bfe{{\mathbf e}}
\def\bft{{\mathbf t}}
\def\bfUjn{{\mathbf U^\jmath(n)}}
\def\bfUin{{\mathbf U^\imath(n)}}
\def\UinZ{{U_\sZ^\imath(n)}}
\def\UinR{{U_{\!R}^\imath(n)}}
\def\Om{{\Omega}}
\def\om{{\omega}}
\def\bsi{{\boldsymbol i}}
\def\bsq{{\boldsymbol q}}
\def\im{{\text{\rm im}}}
\def\lra{{\,\longrightarrow\,}}
\def\lmt{{\,\longmapsto\,}}
\def\SnriZ{{\sS^{\jmath\imath}(n,r)}}
\def\Xienr{{\Xi_{2n,2r}}}
\def\Xien{{\Xi_{2n}}}
\def\Xienrl{{\Xi_{2n,2r}^{0\diag}}}
\def\Xienl{{\Xi_{2n}^{0\diag}}}
\def\SenrQ{{\bsS^\imath(n,r)}}
\def\SenQ{{\bsS^\imath(n)}}
\def\bsSen{{\boldsymbol{\sS}^\imath(n)}}
\def\SenrZ{{\sS_\sZ^\imath(n,r)}}
\def\SenrA{{\sS_{\!\sA}^\imath(n,r)}}
\def\SenrR{{\sS_{\!R}^\imath(n,r)}}
\def\SnrA{{\sS_\sA^\jmath(n,r)}}
\def\SerrZ{{\sS_\sZ^\imath(r,r)}}
\def\hla{{\widehat\la}}
\def\hLa{{\widehat\La}}
\def\hmu{{\widehat \mu}}
\def\hnu{{\widehat \nu}}
\def\fkf{{\mathfrak f}}
\def\Ain{{\boldsymbol{\mathfrak{A}}^\imath(n)}}
\def\wt{\widetilde}
\def\wtt{{\text{\rm wt}}}
\def\im{{\text{\rm im}}}
\def\vep{{\varepsilon}}
\def\ev{{\text{\rm ev}}}
\def\sfm{{\mathsf m}}
\begin{document}

\title{The $i$-quantum group $\bfUin$}
\date{\today}

\author{Jie Du and Yadi Wu}

\address{J. D., School of Mathematics and Statistics,
University of New South Wales, Sydney NSW 2052, Australia}
\email{j.du@unsw.edu.au}
\address{Y. W. Institute of Mathematics, Academy of Mathematics and Systems Science, Chinese Academy of Sciences,  100190, Beijing, China}
\email{yadiwu@amss.ac.cn}

\keywords{quantum linear group, $\imath$-quantum group, $q$-Schur algebra, finite symplectic group, quantum Schur--Weyl duality.}

\date{\today}

\subjclass[2010]{16T20, 17B37, 20C08, 20C33, 20G43}
\thanks{Part of the paper was written while the second author was visiting the University of New South Wales as a practicum student. She would like to thank UNSW for the hospitality and the China Scholarship Council for the financial support.}

\begin{abstract}This paper reveals some new structural property for the $i$-quantum group $\bfUin$ and constructs a certain hyperalgebra from the new structure which has connections to finite symplectic groups at the modular representation level. This work is built on certain finite dimensional $\mathbb Q(\up)$-algebras $\SenrQ$ whose integral form $\SenrZ$ is investigated as a convolution algebra arising from the geometry of type $C$  in \cite{BKLW}. We investigate $\SenrZ$ as an endomorphism algebra of a certain $q$-permutation module over the Hecke algebra of type $C_r$ and interpret the convolution product as a composition of module homomorphisms. We then prove that the action of $\bfUin$ on the $r$-fold tensor space of the natural representation of $\bfU(\frak{gl}_{2n})$ (via an embedding $\bfUin\hookrightarrow\bfU(\frak{gl}_{2n})$) coincides with an action given by multiplications in $\SenrQ$. In this way, we re-establish the surjective homomorphism from $\bfUin$ to $\SenrQ$ due to Bao--Wang \cite{BW}. We then embed $\bfUin$ into the direct product of $\SenrQ$ and completely determine its image. This gives a new realisation for $\bfUin$ and, as an application, the aforementioned hyperalgebra is an easy consequence of this new construction.


\end{abstract}

\maketitle

\tableofcontents

\section{Introduction}
Building on two fundamental results---some ``short'' multiplication formulas in \cite[Lem.~3.2]{BKLW} and a triangular relation between two bases in \cite[Thm.~3.10]{BKLW}---for the $q$-Schur algebras of type $B$, the authors successfully established certain ``long'' multiplication formulas which are used to develop a new presentation for the $i$-quantum group $\bfUjn$. This new realisation has an important application to a partial integral version of the Bao--Wang's Schur duality \cite[Thm.~6.27]{BW}. Thus,  this type of $q$-Schur algebras can now play a bridging role between the modular representation theory of the $i$-quantum groups $\bfUjn$ and that of finite orthogonal groups, a well-known connection in the type $A$ case established almost thirty years ago; see \cite{DJ89} and \cite{Du95}.

It is natural to expect that the $q$-Schur algebra $\SenrZ$ of type $C$ should play a similar bridging role between the $i$-quantum groups $\bfUin$ and finite symplectic groups. Since $\SenrZ$ is isomorphic to a centralizer subalgebra $e\SnrZ e$ for a certain idempotent $e$,
some of the structure of $\SenrZ$ 
such as the  aforementioned two fundamental results should be transferred from that of $e\SnrZ e$.
However, the ``twin products'' used in a triangular relation described in \cite[\S5.4]{BKLW} cannot be transferred. 
So, we must modify the triangular relation in $e\SnrZ e$ so that it can be transferred to $\SenrZ$. Also, in order to develop the link between representations of $\bfUin$, regarded as a coideal subalgebra of $\bfU(\mathfrak{gl}_{2n})$, and the finite symplectic groups, no transferring can apply; they all need to be developed independently.

This paper also follows an approach different from that used in \cite{DW}. After the definition of $\bfUin$, we identify $\bfUin$ as a coideal subalgebra of the quantum linear group $\bfU(\frak{gl}_{2n})$ and compute its actions on the tensor space $\Omega^{\otimes r}$, where $\Omega$ is the natural representation of $\bfU(\frak{gl}_{2n})$ (see Proposition \ref{U action}). Once we introduce the $q$-Schur algebra $\SenrZ$ of type $C$, we introduce an action on $\Omega^{\otimes r}$ by the  Hecke algebra $\bsH(r)$ via the vector space isomorphism from $\Omega^{\otimes r}$ to a $\SenrQ$-$\bsH(r)$-bimodule. We then identify the $\bfUin$-action on $\Omega^{\otimes r}$ with the $\SenrQ$-action (Theorem \ref{rH action}). In this way, we prove that the $\bfUin$-action commutes with the $\bsH(r)$-action and establish the partial Schur--Weyl duality in Corollary \ref{rho4}. This approach is different from the original proof given in \cite[Thm.~5.4]{BW} and is convenient to lift the partial duality to the integral level.

We organise the paper as follows. In \S2, we review the definition of $i$-quantum groups $\bfUin$ and their realisation as a coideal sublagebra of the quantum linear group $\bfU(\mathfrak{gl}_{2n})$. We then compute the action of $\bfUin$ on the tensor product $\Om^{\otimes r}$ via the coideal subalgebra embedding into $\bfU(\mathfrak{gl}_{2n})$. 
 In \S3, we introduce the $q$-Schur algebra $\SenrZ$ of type $C$ through the (Iwahori--)Hecke algebras of type $C_r$ and identify  $\Om^{\otimes r}$ as an $\SenrQ$-$\bsH(r)$-bimodule in \eqref{eta}. This allows us to transfer the $\bsH(r)$-action on the bimodule to an action on $\Om^{\otimes r}$. In \S4, short multiplication formulas in $\SenrZ$ (Lemma \ref{keyMF}) are introduced, following  \cite[Thm~3.7, Lem. A.13]{BKLW}, and are used to prove that the $\bfUin$-action on $\Om^{\otimes r}$ given in Proposition \ref{U action} coincides with the action of certain elements in $\SenrQ$ via multiplication formulas. This results in a map $\rho^\imath_{r}$ sending $\bfUin$ into $\End_{\bsH(r)}(\Om^{\otimes r})$ (Theorem \ref{rH action}). 
 In \S5, we develop a triangular relation between two bases of $\SenrZ$. This improves a similar relation via twin products in \cite[\S5.4]{BKLW}. Long multiplication formulas in $\SenrQ$ are derived in \S6 (Theorem \ref{longMF}). We further use them together with the triangular relation to prove that the map $\rho^\imath_{r}$ is surjective. As an application, we immediately get in \S7 a monomorphism from $\bfUin$ to the direct product $\SenQ$ of $\SenrQ$. The triangular relation further allows us to determine the image of the embedding. This gives us a new realisation of $\bfUin$ (Theorem \ref{summary}). Finally, in the last section, we introduce a $\sZ$-subalgebra $\UinZ$ so that $\rho_r^\imath$ induces an epimorphism from $\UinZ$ onto $\SenrZ$ and, at the same time, use the interpretation of $\SenrZ$ as the endomorphism algebra of a permutation module over the finite symplectic group $G=\Sp_{2r}(q)$ to establish a link between representations of $\UinR$ and $RG$ via $\SenrR$ for any field $R$.

\smallskip
\noindent
{\bf Some notations.} For a positive integer $a$, let
$$[1,a]=\{1,2,\ldots,a\},\quad [1,a)=\{1,2,\ldots,a-1\}.$$
Let $\sZ=\mathbb Z[\up,\up^{-1}]$ be the integral Laurent polynomial ring and let $\sA:=\mathbb Z[\bsq]$ ($\bsq=\up^{2}$). For any integers $n,m$ with $m>0$, we set
$$\left[\!\!\left[ n\atop m\right]\!\!\right]=\frac{\prod_{i=0}^{m-1}(\bsq^{(n-i)}-1)}{\prod_{i=1}^{m}(\bsq^{i}-1)}\in\sA,\text{ where }n\geq0,$$
$$[n]=\frac{\up^n-\up^{-n}}{\up-\up^{-1}}\quad \mbox{and} \quad \left[ n\atop m\right]=\frac{[n][n-1]\ldots[n-m+1]}{[1][2]\cdots[m]}=\prod_{i=1}^m\frac{\up^{n-i+1}-\up^{-(n-i+1)}}{\up^i-\up^{-i}}.$$
Denote $\left[\!\!\left[ n\atop 1\right]\!\!\right]$ as $[\![n]\!]$ and set $\left[\!\!\left[ n\atop 0\right]\!\!\right]=1=\left[ n\atop 0\right]$. Note that
\begin{equation}\label{0.n-m}\left[ n\atop m\right]=\up^{m(m-n)}\left[\!\!\left[ n\atop m\right]\!\!\right].
\end{equation}

 We also define, for $s,t\in\Z$ with $t>0$ and an element $K$ in a $\Q(\up)$-algebra,
\begin{equation}\label{Kbinom}
\left[\begin{matrix}K;s\\t\end{matrix}\right]=\prod_{i=1}^t\frac{K\up^{s-i+1}-K^{-1}\up^{-(s-i+1)}}{\up^i-\up^{-i}}.
\end{equation}

\section{The $i$-quantum group $\bfUin$ and its associated tensor modules}
In Bao-Wang's study \cite{BW} of canonical bases for the quantum symmetric pairs introduced in \cite{Le99, Le03}, they investigated two classes of such quantum symmetric pairs whose associated coideal subalgebras or $i$-quantum groups\footnote{Roughly speaking, an $i$-quantum group is a quantum analogue of the universal enveloping algebras of a fixed-point Lie subalgebra $\mathfrak g^\theta$ of a semisimple Lie algebra $\mathfrak g$ with an involution $\theta$. Here $i$ stands for {\it involution}.} are denoted by $\bfU^{\jmath}(n)$ and $\bfU^\imath(n)$, where $n$ indicates the rank of the $i$-quantum group. We now follow the definition of $\bfU^\imath(n)$ given in \cite[\S A.4]{BKLW} .
\begin{defi}\label{Uimath}
The algebra $\bfU^{\imath}(n)$ is defined to be the associative algebra over $\Q(\up)$ with generators  $e_i$, $f_i$, $d_a$, $d_{a}^{-1}$, $t$, for $i=1,2,\dots,n-1$, $a=1,2,\dots,n$ and the following relations\footnote{The relation $d_atd_a^{-1}=t$ is missing in \cite[(A.11)]{BKLW}.}: for $i,j=1,2,\dots,n-1$, $a,b=1,2,\dots,n$:
\begin{itemize}
\item[($\imath$QG1)] $d_ad_a^{-1}=d_a^{-1}d_a=1, d_ad_b=d_bd_a$;
\item[($\imath$QG2)] $d_ae_jd_a^{-1}=\up^{\delta_{a,j}-\delta_{a,j+1}}e_j,$
                     $d_af_jd_a^{-1}=\up^{-\delta_{a,j}+\delta_{a,j+1}}f_j$, $d_atd_a^{-1}=t$; 
\item[($\imath$QG3)] $e_if_j-f_je_i=\delta_{i,j}\frac{d_id_{i+1}^{-1}-d_i^{-1}d_{i+1}}{\up-\up^{-1}}$;
\item[($\imath$QG4)] $e_ie_j=e_je_i, f_if_j=f_jf_i,$ if $|i-j|>1$;
\item[($\imath$QG5)] $ e_i^2e_j+e_je_i^2=[2]e_ie_je_i,$ 
 $ f_i^2f_j+f_jf_i^2=[2]f_if_jf_i,$, if $|i-j|=1$;
\item[($\imath$QG6)]  \begin{itemize}
\item[(a)] $e_i t=t e_i$ for $i\neq n-1$, 
\item[(b)]$t^2 e_{n-1}+e_{n-1}t^2=[2]t e_{n-1} t+e_{n-1}$, 
\item[(c)]$e^2_{n-1}t+t e^2_{n-1}=[2]e_{n-1}t e_{n-1}$;
\end{itemize}
\item[($\imath$QG7)]\begin{itemize}
\item[(a)] $f_j t=t f_j$ for $j\neq n-1$, 
\item[(b)]$t^2 f_{n-1}+f_{n-1}t^2=[2]t f_{n-1}t+f_{n-1}$, 
\item[(c)]$f^2_{n-1} t+t f^2_{n-1}=[2]f_{n-1}t f_{n-1}$.
\end{itemize}
\end{itemize}
\end{defi}
We use the following diagram to indicate the relations of the generators.
 \begin{center}
\begin{tikzpicture}[scale=1.5]
\fill (0,0) circle (1.5pt);
\fill (1,0) circle (1.5pt);
\fill (2,0) circle (1.5pt);
\fill (4,0) circle (1.5pt);
\fill (5,0) circle (1.5pt);
  \draw (0,0) node[below] {$_1$} --
        (1,0) node[below] {$_2$} -- (2,0)node[below] {$_3$}--(2.5,0);
\draw[style=dotted](2.5,0)--(3.5,0);
\draw (3.5,0)--(4,0) node[below] {$_{n-1}$};
\draw[dashed] (4,0) --
        (5,0) node[below]  {$_t$};
\end{tikzpicture}\\
Figure 1.
\end{center}
Here the dashed line represents some unusual relations between $t$ and $e_{n-1}$ (resp., $f_{n-1}$) in ($\imath$QG6) (resp., ($\imath$QG7)).


The algebra $\bfUin$ admits an involution (i.e., algebra automorphism of order 2)
\begin{equation}\label{omega}
\om:\bfUin\lra\bfUin,\;\;e_i\lmt f_i,\; f_i\lmt e_i,\;d_a\lmt d_a^{-1},\;t\lmt t,
\end{equation}
 and an anti-involution
\begin{equation}\label{tau}
\tau:\bfUin\lra\bfUin,\;\;e_i\lmt e_i,\; f_i\lmt f_i,\;d_a\lmt d_a^{-1},\;t\lmt t;
\end{equation}
see \cite[Lem.~2.1]{BW}.

Consider the quantum linear group $\bfU(\mathfrak{gl}_{2n})$, a (Hopf) $\Q(\up)$-algebra defined by generators 
$$E_h,F_h,K_j^\pm,\;h\in[1,2n), j\in[1,2n].$$
and relations similar to ($\imath$QG1)--($\imath$QG5) with $E_h,F_h,K_j^\pm$ replacing $e_h,f_h,d_j^\pm$, respectively. Its comultiplication is defined by
\begin{equation}\label{Delta}
\Delta:\bfU(\mathfrak{gl}_{N})\longrightarrow \bfU(\mathfrak{gl}_{N})\otimes \bfU(\mathfrak{gl}_{N}),\;\;
\aligned
E_h&\longmapsto 1\otimes E_h+E_h\otimes \wK_h,\\
F_h&\longmapsto F_h\otimes 1+\wK_h^{-1}\otimes F_h,\\
K_j&\longmapsto K_j\otimes K_j.
\endaligned
\end{equation}
(We omit the counit and antipode maps as they are not used.)

The algebra $\bfU(\mathfrak{gl}_{2n})$ admits involution $\wt\om$
 similar to \eqref{omega} with $t$ omitted:
$$\wt\om: \bfU(\mathfrak{gl}_{2n})\lra \bfU(\mathfrak{gl}_{2n}),\;\;E_h\lmt F_h,\;F_h\lmt E_h,\;K_j\lmt K_j^{-1},$$
and an anti-involution 
$$\wt\tau:\bfU(\mathfrak{gl}_{2n})\lra \bfU(\mathfrak{gl}_{2n}),\;\;E_i\lmt E_i,\; F_i\lmt F_i,\;K_j\lmt K_j^{-1}$$ 
similar to \eqref{tau} with $t$ omitted.
(See, e.g., \cite[Lem.~6.5]{DDPW08}.)
We also need the ``graph automorphism'':
$$\wt\gamma: \bfU(\mathfrak{gl}_{2n})\lra \bfU(\mathfrak{gl}_{2n}),\;\;E_h\lmt E_{2n-h},\;F_h\lmt F_{2n-h},\;K_j\lmt K_{2n+1-j}^{-1}.$$

Note that
    $$\wt\gamma(\wK_i)=\wK_{2n-i},\;\text{ and }\; \wt\tau(\wK_i)=\wK_i^{-1}=\wt\om(\wK_i),$$
where $\wK_i=K_iK_{i+1}^{-1}$. 

The following realisation of $\bfUin$ is modified from \cite[Prop.~2.2]{BW}. We intentionally make the embedding to agree with the one given in \cite[Prop.~4.5]{BKLW}, thus, the embedding was chosen as below. 
\begin{lemm}\label{iota}
There is an injective $\Q(\up)$-algebra homomorphism $\iota:\bfUin\to\bfU(\mathfrak{gl}_{2n})$ defined, for $i\in[1,n),j\in[1,n]$, by 
$$\aligned
d_j\longmapsto K_j^{-1}K_{2n+1-j}^{-1},\;
e_i &\longmapsto F_i+\wK_i^{-1}E_{2n-i}, \; f_i\longmapsto E_i\wK_{2n-i}^{-1}+F_{2n-i},\\
t\,&\lmt F_n+\up^{-1}E_n\wK_n^{-1}+\wK_n^{-1}.\endaligned$$
Moreover, relative to the coalgebra structure, $\iota(\bfUin)$ is a coideal of $\bfU(\mathfrak{gl}_{2n})$ so that the comultiplication $\Delta$ in \eqref{Delta} restricts to an algebra homomorphism
$$\Delta:\iota(\bfUin)\lra\iota(\bfUin)\otimes \bfU(\mathfrak{gl}_{2n}).$$
\end{lemm}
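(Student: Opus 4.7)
The plan has three parts: (i) verify that $\iota$ is a well-defined algebra homomorphism; (ii) prove injectivity; (iii) verify the coideal property.

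For (i), I would check that the images of $d_a,e_i,f_i,t$ in $\bfU(\mathfrak{gl}_{2n})$ satisfy the relations $(\imath\text{QG1})$--$(\imath\text{QG7})$ of Definition~\ref{Uimath}. The Cartan-type relations $(\imath\text{QG1})$--$(\imath\text{QG2})$ reduce to routine calculations with grouplike elements and the standard weight actions $K_aE_hK_a^{-1}=\up^{\delta_{a,h}-\delta_{a,h+1}}E_h$; the paired structure $\iota(d_a)=K_a^{-1}K_{2n+1-a}^{-1}$ is precisely arranged so that the weight contribution from $K_{2n+1-a}^{-1}$ vanishes on $F_i+\wK_i^{-1}E_{2n-i}$ for $i<n$, giving the exponent $\delta_{a,i}-\delta_{a,i+1}$ in $(\imath\text{QG2})$. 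For $(\imath\text{QG3})$, I would expand $[\iota(e_i),\iota(f_j)]$ into four brackets; using $[E_h,F_h]=(\wK_h-\wK_h^{-1})/(\up-\up^{-1})$ and the vanishing of off-diagonal $[E_h,F_k]$, the result vanishes for $i\ne j$, while for $i=j$ the two surviving brackets combine to $(\iota(d_id_{i+1}^{-1})-\iota(d_i^{-1}d_{i+1}))/(\up-\up^{-1})$. The Serre relations $(\imath\text{QG4})$--$(\imath\text{QG5})$ follow formally by applying the Serre and commutation identities in $\bfU(\mathfrak{gl}_{2n})$ separately on the two symmetric halves $\{E_i,F_i\}$ and $\{E_{2n-i},F_{2n-i}\}$ of the diagram.

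The most technical sub-step of (i) is the verification of $(\imath\text{QG6})$--$(\imath\text{QG7})$. Since $\iota(t)=F_n+\up^{-1}E_n\wK_n^{-1}+\wK_n^{-1}$ has three summands, expanding $\iota(t)^2\iota(e_{n-1})+\iota(e_{n-1})\iota(t)^2$ and similar cubic expressions yields a proliferation of cross terms that must collapse via the quantum Serre relations between $E_n,F_n$ and $E_{n\pm1},F_{n\pm1}$ together with the $[E_n,F_n]$ identity. The coefficient $\up^{-1}$ in front of $E_n\wK_n^{-1}$ and the explicit anchor term $\wK_n^{-1}$ are precisely tuned so that these cancellations produce the asymmetric relations (a)--(c); I expect this bookkeeping to be the main obstacle. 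Fortunately, much of this identity-level computation appears already in \cite[Prop.~2.2]{BW}, and the strategy is to transcribe each of (a)--(c) in turn after accounting for the convention shift in the embedding.

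For (ii), the cleanest route is to cite \cite[Prop.~2.2]{BW} directly, since our embedding differs from theirs only by a Hopf-algebra conventions modification. Alternatively, one may use a PBW-type basis of $\bfUin$ (from Letzter--Kolb quantum symmetric pair theory) and show its image is linearly independent in $\bfU(\mathfrak{gl}_{2n})$ by comparing leading terms under the root-lattice grading.

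For (iii), I would apply $\Delta$ to the image of each generator and regroup. For $\iota(d_j)=K_j^{-1}K_{2n+1-j}^{-1}$, grouplikeness gives $\Delta(\iota(d_j))=\iota(d_j)\otimes\iota(d_j)$, which is manifestly in the required subspace. For $\iota(e_i)$, applying $\Delta$ to each summand of $F_i+\wK_i^{-1}E_{2n-i}$ yields four terms; after regrouping, the crucial identity $\wK_i^{-1}\wK_{2n-i}=\iota(d_id_{i+1}^{-1})$ allows the resulting sum to be written so that every factor on the $\iota(\bfUin)$-side is visibly in $\iota(\bfUin)$. The case of $\iota(f_i)$ is symmetric. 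For $\iota(t)$, the three-term expansion again produces a manageable collection of summands whose coefficients collapse into $\iota$-images thanks to the $\up^{-1}$ scaling and the anchor term $\wK_n^{-1}$, which contributes exactly the correction needed for the coideal inclusion to close up.
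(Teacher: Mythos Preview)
Your plan is sound and would succeed, but it takes a different route from the paper. You propose a direct verification: check each relation $(\imath\text{QG1})$--$(\imath\text{QG7})$ on the images, then establish injectivity and the coideal property separately. The paper instead bypasses all of the relation-checking by exhibiting $\iota$ as the explicit composition
\[
\iota=\wt\om\circ\wt\tau\circ\wt\gamma\circ\imath\circ\gamma\circ\tau,
\]
where $\imath$ is the Bao--Wang embedding of \cite[Prop.~2.2]{BW} (after an index shift $[-n+1,n-1]\to[1,2n-1]$), $\gamma:\bfUin\to\bfUin'$ is a relabelling isomorphism, $\tau$ is the anti-involution \eqref{tau}, and $\wt\om,\wt\tau,\wt\gamma$ are the involution, anti-involution, and graph automorphism of $\bfU(\mathfrak{gl}_{2n})$. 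Since each factor is an injective (anti-)homomorphism and the two anti-involutions cancel, $\iota$ is automatically an injective algebra homomorphism; the paper then only has to spot-check the composition on generators, e.g.\ tracing $e_{n-i}$ through all six maps. This is exactly the ``conventions modification'' you allude to in part~(ii), made precise. Your direct approach buys self-containment and makes the delicate $(\imath\text{QG6})$--$(\imath\text{QG7})$ verifications explicit, at the cost of the bookkeeping you anticipate; the paper's approach buys brevity by importing everything from \cite{BW} wholesale, at the cost of not spelling out how the coideal property survives the twist by $\wt\om\circ\wt\tau\circ\wt\gamma$ (this is left implicit).
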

\begin{proof}Let $\bfUin'$ be the $i$-quantum group generated by $t, e_{n+i}, f_{n+i}, d_{n+j}^{\pm1}$ for all
$i\in[1,n),j\in[1,n]$ obtained by an index shift $[-n+1,n-1]\to[1,2n),i\mapsto n+i$ from the $\bfU^\imath$ in \cite[p.24]{BW} (which is extended similarly from $\bfU(\mathfrak{sl}_{2n})$ to $\bfU(\mathfrak{gl}_{2n})$ as in \cite{BKLW}). Note that the associated diagram of $\bfUin'$ has the form
 \begin{center}
\begin{tikzpicture}[scale=1.5]
\fill (0,0) circle (1.5pt);
\fill (1,0) circle (1.5pt);
\fill (2,0) circle (1.5pt);
\fill (4,0) circle (1.5pt);
\fill (-1,0) circle (1.5pt);
  \draw (0,0) node[below] {$_{n+1}$} --
        (1,0) node[below] {$_{n+2}$} -- (2,0)node[below] {$_{n+3}$}--(2.5,0);
\draw[style=dotted](2.5,0)--(3.5,0);
\draw (3.5,0)--(4,0) node[below] {$_{2n-1}$};
\draw[dashed] (0,0) --
        (-1,0) node[below]  {$_t$};
\end{tikzpicture}\\
Figure 2.
\end{center}
Thus, after index shifting,
 the injective $\Q(\up)$-algebra homomorphism $\imath:\bfU^\imath\to \bfU$ in
\cite[Prop. 2.2]{BW} takes the form
$$\aligned
\imath:\bfUin'\to \bfU(\mathfrak{gl}_{2n}),\quad e_{n+i}&\lmt E_{n+i}+\wK_{n+i}^{-1}F_{n-i},\;f_{n+i}\lmt F_{n+i}\wK_{n-i}^{-1}+E_{n-i},\\
d_{n+j}&\lmt K_{n+j}K_{n+1-j},\; t\,\lmt E_n+vF_n\wK_n^{-1}+\wK_n^{-1},
\endaligned$$
 On the other hand, relabelling gives an algebra isomorphism
$$\gamma:\bfUin\lra\bfUin',\; t\,\lmt t, e_{n-i}\lmt e_{n+i}, \;f_{n-i}\lmt f_{n+i}, d_{n-j}\lmt d_{n+1+j}^{-1}.$$
Now, one checks easily that $\iota=\wt\om\circ\wt\tau\circ\wt\gamma\circ\imath\circ\gamma\circ\tau$. For example, for $i\in[1,n)$,
$$\aligned
e_{n-i}&\overset\tau\lmt e_{n-i}\overset\gamma\lmt e_{n+i}\overset\imath\lmt E_{n+i}+\wK_{n+i}^{-1}F_{n-i}\\
&\overset{\wt\gamma}\lmt E_{n-i}+\wK_{n-i}^{-1}F_{n+i}\overset{\wt\tau}\lmt E_{n-i}+\wK_{n-i}F_{n+i}
\overset{\wt\om}\lmt F_{n-i}+\wK_{n-i}^{-1}E_{n+i},
\endaligned$$
and, for $j\in[1,n]$, $\iota(d_{n+1-j})=\wt\om\circ\wt\tau\circ\wt\gamma\circ\imath(d_{n+j})=\wt\om\circ\wt\tau\circ\wt\gamma(K_{n+j}K_{n+1-j})=K_{n+j}^{-1}K_{n+1-j}^{-1}$,
as desired.
\end{proof}

\begin{remark}Dropping $\wt\om$ or $\wt\om$ together with $\wt\gamma$ and $\gamma$ in $\iota$ results in other embeddings. However, as shown in \cite[Thm.~7.1]{DW}, the embedding $\iota:\bfUin\to\bfU(\mathfrak{gl}_{2n})$ with the resulting action on the tensor space $\Omega^{\otimes r}$ is compatible with the action induced from $q$-Schur algebra at level $r$; see Theorem \ref{rH action} below. 
\end{remark}

Let $\Om=\Om_{2n}$ be the natural representation of $\bfU(\mathfrak{gl}_{2n})$ with a $\Q(\up)$-basis $\{\om_1,\om_2,\dots,\om_{2n}\}$ via the following actions:
\begin{equation}\label{NR}
E_h. \om_{i}=\delta_{i,h+1}\om_h,\;F_{h}.\om_{i}=\delta_{i,h}\om_{h+1},\; K_j.\om_i=\up^{\delta{i,j}}\om_i.
\end{equation}
Let 
\begin{equation}\label{I2nr}
I(2n,r):=\{\bsi=(i_1,\ldots,i_r)\mid i_j\in[1,2n]\}.
\end{equation}
For $\bsi=(i_1,\ldots,i_r)\in I(2n,r)$, let
 \begin{equation}\label{hat i}
 \widehat\bsi=(i_1,\ldots,i_r,i_{r+1},\ldots,i_{2r})\in I(2n,2r)
 \end{equation}
  be defined by setting $i_{2r+1-j}=2n+1-i_j$ for all $j\in[1,r]$. Define
\begin{equation}\label{weight}
\aligned
\wtt(\widehat\bsi)&=(\la_1,\la_2,\ldots,\la_n\ldots, \la_{2n}),\text{ where }\la_j=\#\{a\in[1,2r]\mid i_a=j\},\\
\wtt(\bsi)&=(\la_1,\la_2,\ldots,\la_{n}).\endaligned
\end{equation}

The {\it tensor space} $\Om^{\otimes r}$ has a basis 
$$\{\om_\bsi:=\om_{i_1}\otimes\om_{i_2}\otimes\cdots\otimes\om_{i_r}\mid\bsi=(i_1,\ldots,i_r)\in I(2n,r)\}$$
and becomes a $\bfU(\mathfrak{gl}_{2n})$-module via the actions:
\begin{equation*}\label{NR1}
E_h .\om_\bsi=\Delta^{(r-1)}(E_h)\om_\bsi,\;F_{h}.\om_\bsi=\Delta^{(r-1)}(F_h)\om_\bsi,\; K_j.\om_\bsi=\Delta^{(r-1)}(K_j)\om_\bsi,
\end{equation*}
where 
$$\Delta^{(r-1)}=(\Delta\otimes \underbrace{1\otimes\cdots\otimes 1}_{r-2})\circ\cdots\circ(\Delta\otimes 1)\circ\Delta:\bfU(\mathfrak{gl}_{2n})\lra \bfU(\mathfrak{gl}_{2n})^{\otimes r}.$$
Thus, we obtain a $\mathbb Q(\up)$-algebra homomorphism 
\begin{equation}\label{rho}
\rho_{r}=\rho_{n,r}:\bfU(\mathfrak{gl}_{2n})\lra\End(\Om_{2n}^{\otimes r}).
\end{equation} 

For any $\bsi=(i_1,\ldots,i_r)\in I(2n,r)$, we use the abbreviation 
$\om_\bsi:=\om_{i_1}\om_{i_2}\cdots\om_{i_r}$ below for $\om_{\bsi}=\om_{i_1}\otimes\om_{i_2}\otimes\cdots\otimes\om_{i_r}$ and call $\wtt(\bsi)$ the {\it weight} of $\om_\bsi$.
\begin{prop}\label{U action}
The $\mathbb Q(\up)$-algebra homomorphism
\begin{equation}\label{rhoi}
\rho_{r}^\imath:=\rho_{r}\circ\iota:\bfUin\overset\iota\lra\bfU(\mathfrak{gl}_{2n})\overset{\rho_{r}}\lra\End(\Om_{2n}^{\otimes r}) 
\end{equation}
defines a $\bfUin$-module structure on $\Om_{2n}^{\otimes r}$ which is given by the following action formulas: for all $\bsi=(i_1,\ldots,i_r)\in I(2n,r)$,
\begin{itemize}
\item[(1)] $\rho_{r}^\imath(d_j).\om_\bsi= \up^{-\delta(j,\bsi)}\om_\bsi,$ where 
$$\delta(j,\bsi)=|\{k\mid 1\leq k\leq r,i_k=j\}|+|\{k\mid 1\leq k\leq r,i_k=2n+1-j\}|.$$
\item[(2)] $\rho_{r}^\imath(e_h).\om_\bsi=\displaystyle\sum_{1\leq l\leq r\atop i_l=h} \up^{\vep_1(l)}\om_{i_1}\dots \om_{i_{l-1}} \om_{h+1} \om_{i_{l+1}}\dots  \om_{i_r}\\
\text{\qquad\qquad}+ \sum_{1\leq l\leq r\atop i_l=2n-h+1}\up^{\vep_1(l)+\vep_2(l)}\om_{i_{1}}\dots \om_{i_{l-1}}\cdot\om_{2n-h}\cdot \om_{i_{l+1}}\dots \om_{i_r}$,\\ where 
$$\aligned
\vep_1(l)&=-|\{k\mid 1\leq k<l,i_k=h\}|+|\{k\mid 1\leq k<l,i_k=h+1\}|\\
\vep_2(l)&=-|\{k\mid l< k\leq r,i_k=h\}|+|\{k\mid l<k\leq r,i_k=h+1\}|\\
      &\quad+\ |\{k\mid l< k\leq r,i_k=2n-h\}|-|\{k\mid l< k\leq r,i_k=2n-h+1\}|.\endaligned$$
\item[(3)] $\rho_{r}^\imath(f_h).\om_\bsi=\displaystyle\sum_{1\leq l\leq r\atop i_l=h+1}\up^{\vep'_1(l)+\vep'_2(l)}\om_{i_{1}}\dots \om_{i_{l-1}} \cdot\om_{h}\cdot\om_{i_{l+1}}\dots \om_{i_r},\\
\text{\qquad \qquad}+ \sum_{1\leq l\leq r\atop i_l=2n-h}\up^{\vep'_1(l)}\om_{i_1}\dots \om_{i_{l-1}} \om_{2n-h+1} \om_{i_{l+1}}\dots  \om_{i_r}$,\\ where 
$$\aligned
\vep'_1(l)&=-|\{k\mid 1\leq k< l,i_k=2n-h\}|+|\{k\mid 1\leq  k< l,i_k=2n-h+1\}|\\
      \vep'_2(l)&=|\{k\mid l<k\leq r,i_k=h\}|-|\{k\mid l<k\leq r,i_k=h+1\}|\\
      &\quad-\ |\{k\mid l< k\leq r,i_k=2n-h\}|+|\{k\mid l< k\leq r,i_k=2n-h+1\}|.\endaligned$$
\item[(4)] $\rho_{r}^\imath(t).\om_\bsi=\up^{\tau_0}\om_{i_1}\cdots \om_{i_{l-1}}\om_{i_l}\om_{i_{l+1}}\cdots \om_{i_r}+\displaystyle\sum_{1\leq l\leq r\atop i_l=n} \up^{\tau_1(l)}\om_{i_1}\cdots \om_{i_{l-1}}\om_{n+1}\om_{i_{l+1}}\cdots \om_{i_r}\\
\text{\qquad \qquad\;\;\;}
+\ \sum_{\substack{l=1\\i_l=n+1}}\up^{\tau_1(l)}\om_{i_1}\cdots \om_{i_{l-1}}\om_{n}\om_{i_{l+1}}\cdots \om_{i_r}$,\\
where $$\aligned
\tau_0&=|\{k\mid 1\leq k\leq r,i_k=n+1\}|-|\{k\mid 1\leq k\leq r,i_k=n\}|\\
\tau_1(l)&=|\{k\mid 1\leq k<l,i_k=n+1\}|-|\{k\mid 1\leq k< l,i_k=n\}|.\endaligned$$
\end{itemize}
\end{prop}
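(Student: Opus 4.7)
The proof proceeds by directly unwinding the composition $\rho_r^\imath=\rho_r\circ\iota$: Lemma~\ref{iota} gives each $\iota(x)$ as an explicit polynomial in $E_h,F_h,K_j^{\pm1},\wK_h^{\pm1}$; each such monomial acts on $\Om^{\otimes r}$ through $\Delta^{(r-1)}$ and the natural-representation formulas \eqref{NR}. The whole proof is thus a systematic accounting of $\up$-exponents, and the four parts of the proposition correspond to the four types of generator.

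The starting point is to compute $\Delta^{(r-1)}$ on the generators of $\bfU(\mathfrak{gl}_{2n})$. Because $K_j$ and $\wK_h$ are grouplike, $\Delta^{(r-1)}(K_j)=K_j^{\otimes r}$ and $\Delta^{(r-1)}(\wK_h^{\pm1})=(\wK_h^{\pm1})^{\otimes r}$, so both act diagonally on $\om_\bsi$ via \eqref{NR}. An easy induction on $r$ using \eqref{Delta} yields
\begin{equation*}
\Delta^{(r-1)}(E_h)=\sum_{l=1}^r 1^{\otimes(l-1)}\otimes E_h\otimes\wK_h^{\otimes(r-l)},\qquad \Delta^{(r-1)}(F_h)=\sum_{l=1}^r(\wK_h^{-1})^{\otimes(l-1)}\otimes F_h\otimes 1^{\otimes(r-l)}.
\end{equation*}
Consequently $E_h.\om_\bsi$ is a sum over positions $l$ with $i_l=h+1$, in which $\om_{h+1}$ is replaced by $\om_h$ in slot $l$, weighted by the scalar by which $\wK_h$ acts on $\om_{i_{l+1}}\cdots\om_{i_r}$; analogously $F_h.\om_\bsi$ is a sum over $l$ with $i_l=h$, with $\wK_h^{-1}$ acting on the initial segment.

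With this in hand, each of (1)--(4) reduces to exponent bookkeeping. Part (1) is immediate: $\iota(d_j)=K_j^{-1}K_{2n+1-j}^{-1}$ acts diagonally, and combining the two $K$-exponents from \eqref{NR} gives $\up^{-\delta(j,\bsi)}$. For (2), split $\iota(e_i)=F_i+\wK_i^{-1}E_{2n-i}$; the $F_i$-summand contributes the first displayed sum with exponent $\vep_1(l)$ directly. For the $\wK_i^{-1}E_{2n-i}$-summand, one first applies $E_{2n-i}$ to obtain a tail contribution $|\{k>l:i_k=2n-i\}|-|\{k>l:i_k=2n-i+1\}|$, and then the diagonal $\wK_i^{-1}$ on the resulting tensor; since $i\in[1,n)$ forces $\{i,i+1\}\cap\{2n-i,2n-i+1\}=\emptyset$, the replacement $\om_{2n-i+1}\mapsto\om_{2n-i}$ in slot $l$ leaves the $\wK_i^{-1}$-count unchanged off slot $l$, and one checks that the combined exponent collapses exactly to $\vep_1(l)+\vep_2(l)$. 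Part (3) follows from a parallel computation on $\iota(f_i)=E_i\wK_{2n-i}^{-1}+F_{2n-i}$.

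Finally, for (4), $\iota(t)=F_n+\up^{-1}E_n\wK_n^{-1}+\wK_n^{-1}$: the summand $\wK_n^{-1}$ contributes the diagonal term $\up^{\tau_0}\om_\bsi$; the summand $F_n$ gives the second sum with exponent $\tau_1(l)$ exactly as in (2) with $h=n$; and the subtle summand $\up^{-1}E_n\wK_n^{-1}$ requires combining the global $\wK_n^{-1}$-factor with the tail $\wK_n$-factors from $\Delta^{(r-1)}(E_n)$, which together produce the exponent $\tau_1(l)+1$ on each term with $i_l=n+1$; the explicit scalar $\up^{-1}$ in $\iota(t)$ then cancels the $+1$ to leave $\tau_1(l)$. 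The main obstacle is precisely this three-way exponent accounting: at each step one must track the $\wK_h$'s introduced by the coproduct, the $\wK_h^{\pm1}$'s already present inside $\iota$, and the explicit scalar $\up^{-1}$ in $\iota(t)$, and verify that their sum, together with the index change made by $E_h$ or $F_h$, matches the claimed exponent. Once a single case has been worked out carefully, the remaining ones follow by the same template.
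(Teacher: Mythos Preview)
Your proposal is correct and follows essentially the same approach as the paper: compute $\Delta^{(r-1)}$ on the generators $E_h,F_h,K_j$, substitute the expressions $\iota(d_j),\iota(e_h),\iota(f_h),\iota(t)$ from Lemma~\ref{iota}, and reduce everything to exponent bookkeeping via \eqref{NR}. The paper refers to \cite[Thm.~7.1]{DW} for parts (1)--(3) and only spells out part (4) in detail, whereas you sketch all four cases; in particular your handling of the ``subtle summand'' $\up^{-1}E_n\wK_n^{-1}$ in (4)---combining the global $\wK_n^{-1}$ with the tail $\wK_n$'s from $\Delta^{(r-1)}(E_n)$ to get $\tau_1(l)+1$, then cancelling the $+1$ with the explicit $\up^{-1}$---matches the paper's computation exactly.
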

\begin{proof}We first compute  $\Delta^{(r-1)}(E_h)$, $\Delta^{(r-1)}(F_h)$, and $\Delta^{(r-1)}(K_j)$ as in \cite[(2.3.2)]{DW} and then apply the algebra homomorphism $\Delta^{(r-1)}$ to $\iota(d_j)=K_j^{-1}K_{2n+1-j}^{-1}$,
$\iota(e_i)= F_i+\wK_i^{-1}E_{2n-i}$, $\iota(f_i)= E_i\wK_{2n-i}^{-1}+F_{2n-i},$ and
$\iota(t)= F_n+\up^{-1}E_n\wK_n^{-1}+\wK_n^{-1}$. The remaining calculation via \eqref{NR} is straightforward. See the proof of \cite[Thm. 7.1]{DW} for action formulas (1)--(3), excluding the $h=n$ case there. For (4), we have \begin{eqnarray*}
\begin{aligned}
\rho_{r}^\imath(t).\om_{\bsi} &=\Delta^{(r-1)}(F_n+\up^{-1}E_n \wK^{-1}_{n}+\wK^{-1}_{n}).(\om_{i_1} \om_{i_2}\dots \om_{i_{r}})\\
&\ =\ \sum_{l=1}^{r} \wK_n^{-1}\om_{i_1}\cdots \wK_{n}^{-1}\om_{i_{l-1}} F_n\om_{i_{l}} \cdot\om_{i_{l+1}}\dots \om_{i_r} +\wK_{n}^{-1}\om_{i_1}\cdots \wK^{-1}_n \om_{i_r}\\
&\quad +\ \up^{-1}\sum_{l=1}^{r}\wK^{-1}_{n}\om_1\cdots \wK^{-1}_{n}\om_{i_{l-1}}.E_n\wK^{-1}_n \om_{i_{l}}. \om_{i_{l+1}}\cdots \om_{i_{r}}.
\end{aligned}
\end{eqnarray*} 
Now applying \eqref{NR} gives the desired formula.
\end{proof}
It is proved in \cite[Thm.~5.4]{BW} that, for the Hecke algebra $\bsH(r)$ of type $B_r$, there is a $\bfUin$-$\bsH(r)$-bimodule structure on $\Om_{2n}^{\otimes r}$ such that im$(\rho_{r}^\imath)=\End_{\bsH(r)}(\Om_{2n}^{\otimes r}) $. We will provide a different proof later in \S6 as a by-product of the multiplication formulas developed in \S4 (see Theorem \ref{rH action} and Corollary \ref{rho4}).

\section{The $q$-Schur algebra of type $C$}
The Weyl group of type $C_r$ is isomorphic to the Weyl group of type $B_r$ which is
 the Coxeter system $(W,S)$, where $S=\{s_1,\ldots,s_{r-1},s_r\}$ with the subgroup $W':=\langle s_1,\ldots,s_{r-1}\rangle\cong\fS_r$, the symmetric group on $r$ letters, and $s_{r-1}s_r$ has order 4.
In this case, $W$ is regarded as a fixed-point subgroup of $\fS_{2r}$ under the graph automorphism.
More precisely, there is a type $C$ embedding (cf. the type B embedding in \cite[\S3]{DW}): 
\begin{equation}\label{sigma}
\sigma:W\lra \fS_{2r},\;s_1\mapsto(1,2)(2r,2r-1),\ldots,s_{r-1}\mapsto(r-1,r)(r+2,r+1), s_r\mapsto(r,r+1).
\end{equation}
Then im$(\sigma)$ is the fixed-point subgroup $(\fS_{2r})^\theta$ of the involution $\theta$ on $\fS_{2r}$ sending $(i,j)$ to $(2r+1-i,2r+1-j)$. In other words, $W$ may be identified as the subgroup of $\fS_{2r}$ consisting of permutations
$$w=\begin{pmatrix}1&2&\cdots&r&r+1&\cdots&2r\\ i_1&i_2&\cdots&i_r&i_{r+1}&\cdots&i_{2r}\end{pmatrix}$$
satisfying $i_j+i_{2r+1-j}=2r+1$. 

If $W$ is regarded as a fixed-point subgroup of $\fS_{2r+1}$ (see Remark \ref{BvsC} below),  $W$ is called the Weyl group of type $B_r$. See  \cite{DW} for more details in this case and also \cite{LW} in general.
  
Let $\sH_\sA(r)=\sH_\sA(C_r)$ be the Hecke algebra over $\sA:=\mathbb Z[\bsq]$ ($\bsq=\up^{2}$) associated with$(W,S)$. 
 Then it is generated by $T_i=T_{s_i}$ for $1\leq i\leq r$ subject to the relations:
 $$\aligned
 &T_i^2=(\bsq-1)T_i+\bsq, \forall\ i; \;\; T_iT_j=T_jT_i, |i-j|\geq2,\\
 &T_jT_{j+1}T_j=T_{j+1}T_jT_{j+1}, 1\leq j<r-1;\\
 &T_{r-1}T_rT_{r-1}T_r=T_rT_{r-1}T_rT_{r-1}.
 \endaligned$$
 Let $\sZ=\Z[\up,\up^{-1}]$. We will mainly use the $\sZ$-algebra $\sH_\sZ(r)=\sH_\sA(r)\otimes \sZ$ in the sequel. Both $\sH_\sA(r)$ and $\sH_\sZ(r)$ have basis $\{T_w\}_{w\in W}$. The subalgebra generated by $T_1,\ldots,T_{r-1}$ is the Hecke algebra $\sH_\sA(\fS_r)$ over $\sA$ or $\sH_\sZ(\fS_r)$ over $\sZ$ associated with the symmetric group $\fS_r$.
 
 Let $\ell:W\to\mathbb N$ be the length function relative to $S$. Then, for $s\in S, w\in W$, we have
 $$T_sT_w=\begin{cases}T_{sw},&\text{ if }\ell(sw)=\ell(w)+1;\\
 (\bsq-1)T_w+\bsq T_{sw},&\text{ if }\ell(sw)=\ell(w)-1.\end{cases}$$

Let 
$$\Lambda(n,r)=\{\la=(\la_1,\la_2\ldots,\la_{n})\in \mathbb N^{n}\mid \la_1+\cdots+\la_{n}=r\}.$$ 
For $\la\in\Lambda(n,r)$, let 
$W_\la$ be the parabolic subgroup of $W$ generated by 
$$ S\backslash\{s_{\la_1+\cdots+\la_i}\mid i\in[1,n]\}.$$
Note that $W_\la$ is a subgroup of $W'$. Let
\begin{equation}\label{Tnr}
x_\la=\sum_{w\in W_\la}T_w,\quad \sT_\sZ(n,r)=\bigoplus_{\la\in\Lambda(n,r)}x_\la\sH_\sZ(r).
\end{equation}
 The endomorphism $\sZ$-algebra:
\begin{equation}\label{Snr}
\SenrZ=\End_{\sH_\sZ(r)}\big(\sT_\sZ(n,r)\big)
\end{equation}
is called the (generic) {\it $q$-Schur algebra of type $C$}. Note that $\SenrZ$ has also an $\sA$-form
$\SenrA$ defined by using $\sH_\sA(r)$ as above.

For $\la\in\La(n,r)$, let $\sD_\la$ be the set of shortest representatives of right cosets of $W_\la$ in $W$, and let $\sD_{\la\mu}=\sD_\la\cap\sD_\mu^{-1}$. Then $\sD_{\la\mu}$ is the set of shortest representatives of $W_\la$-$W_\mu$ double cosets.

Define
\begin{equation}\label{hla}
\widehat{\ }:\La(n,r)\lra\La(2n,2r),\;\;\la=(\la_1,\ldots,\la_n)\longmapsto\hla=(\la_1,\ldots,\la_n,\la_n,\ldots,\la_1).
\end{equation}
Note that, for any $\bsi\in I(2n,r)$ with $\widehat\bsi$ defined in \eqref{hat i}, $\wtt(\widehat\bsi)=\hla$ for some $\la\in\Lambda(n,r)$. In other words,
$\wtt(\widehat\bsi)=\widehat{\wtt(\bsi)}$.

For a positive integer $N$ and integer $r\geq0$, let 
\begin{equation}\label{Xi}
\aligned
\Xi_{N}&=\big\{A=(a_{i,j})\in \text{Mat}_N(\mathbb{N}) \mid a_{i,j}=a_{N+1-i,N+1-j}, \forall i,j\in [1,N]\},\\
\Xi_{N}^{0\diag}&=\big\{A-\diag(a_{1,1},a_{2,2},\ldots,a_{N,N})\mid A=(a_{i,j})\in\Xi_{N}\},\\
\Xi_{N,r}&=\{A=(a_{i,j})\in\Xi_{N}\mid |A|:=\sum_{i,j}a_{i,j}=r\},\;\text{ and }\;\Xi_{N,r}^{0\diag}=\Xi_{N}^{0\diag}\cap\Xi_{N,r}.
\endaligned
\end{equation}
Note that if we represent $w\in \fS_{2r}$ by a $2r\times 2r$ permutation matrix $P(w)=(p_{k,l})$, where $p_{k,l}=\delta_{k,i_l}$, then $w\in W=\fS_{2r}^\theta$ if and only if $P(w)\in\Xi_{2r,2r}$. 

For an $N\times N$ matrix $A=(a_{i,j})$, let
\begin{equation*}
\aligned
\ro(A)&:=\big(\sum_{j}a_{1,j},\,\sum_{j}a_{2,j},\dots,\sum_{j}a_{N,j}\big)\\
\co(A)&:=\big(\sum_{i}a_{i,1},\,\sum_{i}a_{i,2},\dots,\sum_{i}a_{i,N}\big).
\endaligned
\end{equation*}
Clearly, we have
$$\{\ro(A)\mid A\in\Xi_{N,2r}\}=\{\co(A)\mid A\in\Xi_{N,2r}\}=\widehat\La(n,r):=\{\hla\mid \la\in\La(n,r)\}.$$

\begin{lemm}\label{Snr basis}
(1) There is a bijection 
$$\fkm: \{(\la,d,\mu)\mid \la,\mu\in\Lambda(n,r), d\in\sD_{\la,\mu}\}\longrightarrow \Xi_{2n,2r}.$$

(2) The $\sZ$-algebra $\SenrZ$ is a free $\sZ$-module with basis 
$$\{\phi_A=\phi^d_{\la\mu}\mid \la,\mu\in\La(n,r),d\in\sD_{\la\mu}\},$$
where $\phi^d_{\la\mu}$ is defined by $\phi^d_{\la\mu}(x_\nu)=\delta_{\mu,\nu}T_{W_\la dW_\mu}$ and $A=\fkm(\la,d,\mu)$.
\end{lemm}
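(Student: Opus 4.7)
The plan is to transfer the classical type $A$ Dipper--James bijection between parabolic double cosets and matrices through the fixed-point embedding $\sigma:W\hookrightarrow\fS_{2r}$, then to exhibit the $\SenrZ$-basis via the standard Hecke-algebra double-coset construction. The key preliminary observation is that since $\hla$ is palindromic, the Young subgroup $W_{\hla}\le\fS_{2r}$ is $\theta$-stable, and $\sigma$ identifies the parabolic $W_\la$ (for $\la\in\La(n,r)$) with $(W_{\hla})^\theta=W_{\hla}\cap W$.

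For part (1), I start from the classical bijection
$$W_{\hla}\backslash\fS_{2r}/W_{\hmu}\;\longleftrightarrow\;\{A\in\text{Mat}_{2n}(\N):\ro(A)=\hla,\;\co(A)=\hmu\},$$
which sends the minimum-length rep $d$ to $A_d=\bigl(|R_i^{\hla}\cap d(R_j^{\hmu})|\bigr)_{i,j}$, where $R_k^{\hla}$ is the $k$-th block of $[1,2r]$ of size $\hla_k$. On the matrix side $\theta$ acts by $180^{\circ}$ rotation $(a_{ij})\mapsto(a_{2n+1-i,2n+1-j})$, so $\theta$-stable double cosets correspond precisely to matrices in $\Xienr$. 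Since the length on $\fS_{2r}$ is $\theta$-invariant and minimum-length double-coset representatives are unique, any $\theta$-stable double coset $W_{\hla}dW_{\hmu}$ must have $\theta(d)=d$, forcing $d\in W=\fS_{2r}^\theta$. Ranging over all $(\la,\mu)\in\La(n,r)\times\La(n,r)$ and identifying $\sD_{\la\mu}$ with the set of $\theta$-fixed minimum-length reps then yields the bijection $\fkm$.

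For part (2), with (1) supplying the index set, I invoke the standard Hecke-algebra fact valid for any Coxeter system: given $d\in\sD_{\la\mu}$, the double-coset sum $T_{W_\la dW_\mu}=\sum_{w\in W_\la dW_\mu}T_w$ lies in $x_\la\sH_\sZ(r)\cap\sH_\sZ(r)x_\mu$, and $\{T_{W_\la dW_\mu}\mid d\in\sD_{\la\mu}\}$ is a $\sZ$-basis of this intersection. Consequently, $\phi^d_{\la\mu}(x_\nu)=\delta_{\mu\nu}T_{W_\la dW_\mu}$ extends (by zero on the remaining summands of $\sT_\sZ(n,r)=\bigoplus_\nu x_\nu\sH_\sZ(r)$) to a well-defined right $\sH_\sZ(r)$-linear endomorphism. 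Since
$$\End_{\sH_\sZ(r)}(\sT_\sZ(n,r))=\bigoplus_{\la,\mu}\text{Hom}_{\sH_\sZ(r)}\bigl(x_\mu\sH_\sZ(r),\,x_\la\sH_\sZ(r)\bigr),$$
the family $\{\phi_A\mid A\in\Xienr\}$ assembles into a $\sZ$-basis of $\SenrZ$.

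The main technical obstacle is the length-function incompatibility of $\sigma$: the generator $s_i$ for $i<r$ maps to a product of two commuting transpositions in $\fS_{2r}$, so $\sigma$ is not length-preserving. One therefore has to verify separately that a $\theta$-fixed element $d$ is minimum-length in its $W_{\hla}$-$W_{\hmu}$ double coset in $\fS_{2r}$ if and only if it is minimum-length in the corresponding $W_\la$-$W_\mu$ double coset in $W$ (with respect to the intrinsic Coxeter length). This can be handled either by a direct root-system comparison between type $C_r$ and type $A_{2r-1}$, or --- more economically --- by invoking the centralizer realisation $\SenrZ\cong e\,\SnrZ\,e$ mentioned in the introduction and transferring the already-established type $B_r$ basis result from \cite{BKLW} (or \cite{DW}).
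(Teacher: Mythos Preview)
Your approach is essentially the same as the paper's: both transfer the classical type~$A$ double-coset/matrix bijection through the embedding $\sigma:W\hookrightarrow\fS_{2r}$ by identifying $W_\la$ with $(\fS_{\hla})^\theta$ and matching $\theta$-stable double cosets with $\Xi_{2n,2r}$, then invoke the standard Hecke double-coset basis for part~(2). The paper's proof is terser---it simply cites \cite[Lem.~2.2.1]{LL} for the bijection and \cite[1.4]{Du94} for the basis---whereas you spell out the mechanism and correctly flag the one genuine subtlety (that minimum-length representatives in $\fS_{2r}$ must be matched with those in $W$ despite $\sigma$ not preserving length), offering exactly the resolutions the literature uses.
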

\begin{proof} 
For assertion (1), note that the matrix $A=\fkm(\la,d,\mu)$ is the matrix associated with the double coset $\fS_{\hla}\widehat d\fS_{\hmu}$ in $\fS_{2r}$, where $\sigma(W_\nu)=(\fS_{\widehat \nu})^\theta$ for $\nu=\lambda$ or $\mu$, and $\widehat d=\sigma(d)$. For more details, see, e.g., \cite[Lem.~2.2.1]{LL}.
 Assertion (2) follows from (1) and \cite[1.4]{Du94}.
\end{proof}

For any $A=\fkm(\la, d,\mu)$, let
$$[A]=\up^{-\ell(d^+)+\ell(w_{\mu,0})}\phi_{\la,\mu}^d,\;\;[\la]=\phi_{\la,\la}^1$$
where $d^+$ (resp. $w_{\mu,0}$) is the longest element in the double coset $W_\la dW_\mu$ (resp. $W_\mu$). Here we follow the definition given in \cite[1.4]{Du94} or \cite[(3.22)]{LW}\footnote{The Hecke algebra there is the Hecke algebra here with $\up$ replaced by $\up^{-1}$.} (cf. also \cite[(9.3.1)]{DDPW08}).  

Note that, for $A,B\in\Xienr$,
\begin{equation}\label{weight idemp}
[A][B]\neq0\implies \co(A)=\ro(B)\;\; \text{ and }\;\;[\ro(A)][A]=[A]=[A][\co(A)].
\end{equation}

If $n\geq r$, then the basis element $e_\emptyset:=[\diag(\emptyset)]\in\SenrZ$ is an idempotent, where
$$\emptyset=(\underbrace{1,\ldots,1}_r,\underbrace{0,\ldots,0}_{n-r},\underbrace{0\ldots,0}_{n-r},\underbrace{1,\ldots,1}_r)\in\Xi_{2n,2r},$$
and $e_\emptyset\SenrZ e_\emptyset\cong \sH_\sZ(r)$ via the evaluation map $\phi_{\emptyset\emptyset}^w\mapsto \phi_{\emptyset\emptyset}^w(1)=T_w$ for all $w\in W$. Via this isomorphism, $\SenrZ e_{\emptyset}$ becomes  an $\SenrZ$-$\sH_\sZ(r)$-bimodule. 

For  $\bsi=(i_1,i_2,\dots,i_r)\in I(2n,r)$ and $A_\bsi=(a_{k,l})\in\Xi_{2n,2r}$ defined by
\begin{equation}\label{Ai}
a_{k,l}=
\begin{cases}
\delta_{k,i_l}, &\text{ if }l\in[1,r];\\
0, &\text{ if }l\in[r+1,2n-r];\\
\delta_{2n+1-k,i_{2n+1-l}} &\text{ if } l\in[2n+1-r, 2n].\\
\end{cases}
\end{equation}
Note that $\co(A_\bsi)=(1^r,0^{n-r},0^{n-r},1^r)=\emptyset$ and $\ro(A_\bsi)=\wtt(\widehat{\bsi^*})$, where
$\bsi^*=(\bsi,0^{n-r})$ and $\widehat{\bsi^*}$ is similarly defined as in \eqref{hat i} with $i_{2n+1-j}=2n+1-i_j$. If we write $\widehat{\bsi^*}=(i_1,\ldots,i_r,i_{r+1},\ldots.i_{2n})$, then the entry $a_{k,l}$ of $A_\bsi$ has the form $a_{k,l}=\delta_{k,i_l}$ for all $k,l\in[1,2n]$.

By Lemma \ref{Snr basis}(1), there exist $\la=\la_\bsi\in\La(n,r)$, $d=d_\bsi\in\sD_\la$ such that $A_\bsi=\fkm(\la,d,\emptyset)$. Thus, under the assumption $n\geq r$, the evaluation map
\begin{equation}\label{ev}
\ev:\SenrZ e_\emptyset\lra\sT_\sZ(n,r),\;\;[A_\bsi]\lmt \up^{-\ell(d_\bsi^+)}x_{\la_\bsi}T_{d_\bsi}(=\up^{-\ell(d_\bsi^+)}\phi_{\la_\bsi,\emptyset}^{d_\bsi}(1))
\end{equation}
defines an  $\SenrZ$-$\sH_\sZ(r)$-bimodule isomorphism.


If $n<r$, then we may identify $\Xi_{2n,2r}$ as a subset of $\Xi_{2r,2r}$ via the following embedding:
\begin{equation}\label{Acirc}
\Xi_{2n,2r}\lra \Xi_{2r,2r},\;\;A\longmapsto A^\circ=\begin{pmatrix}0&0&0\\0& A&0\\
0&0&0\\\end{pmatrix},
\end{equation}
where each 0 at a corner position of $A^\circ$ is a square zero matrix of size $r-n$ and other zeros  represent zero matrices of appropriate sizes. 
Thus, if $n<r$, we may regard $\SenrZ$ as a centraliser subalgebra of $\SerrZ$ via the induced embedding
$[A]\mapsto[A^\circ]$.

Note also that the embedding $A\mapsto A^\circ$ induces an embedding 
\begin{equation}\label{la^o}
\La(n,r)\lra\La(r,r),\la\longmapsto \la^\circ:=(0^{r-n},\la).
\end{equation}
In $\SerrZ$, the idempotent $f=\sum_{\l\in\La(n,r)}[\hla^\circ]$ induces an algebra isomorphism 
$$f\SerrZ f\cong\SenrZ.$$
Thus, for $n<r$, the evaluation map is in fact an $\SenrZ$-$\sH_\sZ(r)$-bimodule isomorphism
\begin{equation}\label{bimod}
\ev: f\SerrZ e_{\emptyset}\lra \sT_\sZ(n,r),\;\;[A_\bsi]\lmt [A_\bsi](1).
\end{equation}
We record the right $\sH_\sZ(r)$-action from \eqref{ev} and \eqref{bimod} as follows:
\begin{equation}\label{H action}
[A_\bsi] T_{s_j}=\begin{cases}\up [A_{\bsi s_j}],&\text{ if }\ell(d_\bsi s_j)>\ell(d_\bsi), d_\bsi s_j\in\sD_{\la_\bsi} \text{ or }i_j<i_{j+1};\\
\up^2[A_\bsi],&\text{ if }\ell(d_\bsi s_j)>\ell(d_\bsi),d_\bsi s_j\not\in\sD_{\la_\bsi} \text{ or }i_j=i_{j+1};\\
(\up^2-1)[A_\bsi]+\up[A_{\bsi s_j}],&\text{ if }\ell(d_\bsi s_j)<\ell(d_\bsi), \text{ or }i_j>i_{j+1}.
\end{cases}
\end{equation}
Here, for $j=r$, $i_{r+1}=2n+1-i_r$ (cf. \eqref{hat i}).\footnote{The place permutation $\bsi s_j$ is ``truncated'' from the place permutation of $\fS_{2r}$ on all $\widehat\bsi$ defined in
\eqref{hat i} 
via the embedding $\sigma$ in \eqref{sigma}.}

Let $\bsH(r)=\sH_\sZ(r)\otimes \Q(\up)$ and $\bsS^\imath(n,r)=\SenrZ\otimes \Q(\up)$.
In both cases, we obtain the vector space isomorphisms
\begin{equation}\label{eta}
\eta_{r}=\eta_{n,r}:\begin{cases}
\Om_{2n}^{\otimes r}\lra  \bsS^\imath(n,r)e_\emptyset,\;\;\om_\bsi\lmt[A_\bsi],&\text{ if }n\geq r;\\
\Om_{2n}^{\otimes r}\lra  f\bsS^\imath(r,r)e_\emptyset,\;\;\om_\bsi\lmt[A_\bsi^\circ],&\text{ if }n< r.
\end{cases}
\end{equation}

The $\bsH(r)$-action defined in \eqref{H action} is transferred to a right $\bsH(r)$-action on $\Om_{2n}^{\otimes r}$ so that $\eta_{r}$ is an $\bsH(r)$-module isomorphism. Note that the transferred action here coincides with those given in \cite[(6.3),(6.4)]{BKLW}.

\begin{remark}\label{Sjnr} 
(1) If $\la\in\La(n+1,r)$, then $ S\backslash\{s_{\la_1+\cdots+\la_i}\mid i\in[1,n]\}$ also generates a parabolic subgroup $W_\la$ of $W$ that is not necessarily a subgroup of $W'$. Using this $W_\la$, define $x_\la$ as in \eqref{Tnr}. The endomorphism algebra
$$\SnrZ:=\End_{\sH_\sZ(r)}\bigg(\bigoplus_{\la\in\Lambda(n+1,r)}x_\la\sH_\sZ(r)\bigg)$$
is called the $q$-Schur algebra of type $B$. This algebra has a basis $\{[A]\mid A\in\Xi_{2n+1,2r+1}\}$ indexed by the matrix set $\Xi_{2n+1,2r+1}$. 

As observed in \cite[\S5]{BKLW}, this algebra contains a centraliser subalgebra $e\SnrZ e$, where $e^2=e\in\SnrZ$, that is isomorphic to $\SenrZ$; see \S4 for more details.

(2) There is a quantum coordinate algebra approach to both $\SnrZ$ and $\SenrZ$ developed by Lai, Nakano and Xiang in \cite{LNX} where they realise such an algebra as the dual of the $r$-th homogeneous component of the ``coordinate algebra'' of the corresponding $i$-quantum group.
\end{remark}

\section{Short multiplication formulas in $\SenrZ$}
We first embed $\SenrZ$ into $\SnrZ$ as a centralizer subalgebra of the form $e\SnrZ e$ for an idempotent $e$ and then derive some ``short'' multiplication formulas via their counterpart in $e\SnrZ e$ extracted from \cite{BKLW}. As a byproduct, we prove that the image of the map $\rho_{r}^\imath$ in \eqref{rhoi} is isomorphic to a subalgebra of $\SenrQ:=\SenrZ\otimes {\mathbb Q(\up)}$.

We start with the canonical embedding of $\fS_{2r}$ into $\fS_{2r+1}$. Let
$\iota:[1,2r]\to[1,2r+1]$ be the embedding defined by
$\iota(x)=\begin{cases}x,&\text{ if }x\leq r;\\x+1,&\text{ if }x>r.\end{cases}$
Then, $\iota$ induces an injective group homomorphism
$$\iota:\fS_{2r}\lra\fS_{2r+1},\;w\longmapsto \iota(w),$$
where $\iota(w)$ is the permutation that fixes $r+1$ and equals $\iota \circ w\circ\iota^{-1}$ on $[1,2r+1]\backslash\{r+1\}$.
In other words, we may identify $\fS_{2r}$ as a subgroup $\iota(\fS_{2r})$ of $\fS_{2r+1}$ consisting of permutations that fix $r+1$. 

\begin{remark}\label{BvsC}
 If $w_0$ is the longest element of $\fS_{2r+1}$ sending $i$ to $2r+2-i$, then $w_0(r+1)=r+1$ and so $w_0\in\iota(\fS_{2r})$. Thus, $w_0$ induces an (inner) automorphism $\tilde\theta$ sending $w$ to $w_0ww_0$ on $\fS_{2r+1}$ which restricts to the automorphism $\theta=\iota^{-1}\circ\tilde\theta\circ\iota$ on $\fS_{2r}$. Thus, we have the type $B$ identification $W=(\fS_{2r+1})^{\tilde\theta}$. Compare the type $C$ identification $W=(\fS_{2r})^\theta$ given in \eqref{sigma}.
\end{remark}


Now consider the following embeddings
\begin{equation}\label{Adag}
\aligned
(\;\;)^\dagger&:\Xien\lra \Xin,\;\;A=\begin{pmatrix}X&Y\\ Y'&X'\end{pmatrix}
\longmapsto A^\dagger=\begin{pmatrix}X&|&Y\\ \text{---}&1&\text{---}\\
Y'&|&X'\end{pmatrix},\\
(\;\;)^\dagger&:\La(n,r)\lra\La(2n+1,2r+1),\;\la\longmapsto
\hla^\dagger:=(\l_1,\l_2,\ldots,\l_n,1,\l_n,\ldots,\l_2,\l_1).
\endaligned
\end{equation}
where --- and $|$ represent a zero row and column, respectively.

 Let $\Xini$ be the image of $\Xi_{2n}$ in $\Xin$. Then $\Xini$ consists of matrices such that all entries in the $n+1$st row or the $n+1$st column are 0 except the $(n+1,n+1)$ entry which is 1. Let
 $$\Xinri=\Xini\cap\Xinr,\;\;\Xinli=\Xini\cap\Xinl.$$
 Then, $\Xinri=\{A^\dagger\mid A\in\Xienr\}$.
 
We also set 
$$\hLa^\dagger(n,r)=\{\hla^\dagger\mid \la\in\La(n,r)\}\subset \La(2n+1,2r+1).$$
Clearly, $\hLa^\dagger=\{\ro(A)\mid A\in \Xinri\}=\{\wla\mid\la\in\La(n+1,r),\la_{n+1}=0\}$ under the notation of \cite[(3.0.1)]{DW} (or Remark \ref{Sjnr} above). 


Let $e=\sum_{\la\in\La(n,r)}[\diag(\hla^\dagger)]$. Then $e$ is an idempotent in $\SnrZ$. Define the centraliser subalgebra
\begin{equation}\label{Sji}
\SnriZ:=e\SnrZ e.
\end{equation}
\begin{lemm}
 There is an algebra embedding $\fkf:\SenrZ\lra\SnrZ$ sending $[A]$ to $[A^\dagger],$
which induces an algebra isomorphism 
\begin{equation}\label{fkf}
\fkf:\SenrZ\lra\SnriZ;\;\;[A]\longmapsto[A^\dagger].
\end{equation} 
\end{lemm}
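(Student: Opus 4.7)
The strategy is to realize both algebras as endomorphism algebras of Hecke modules and then identify $\SenrZ$ with the summand of $\SnrZ$ cut out by those weights whose last coordinate vanishes. First I would compare the parabolic subgroups. For $\la\in\La(n,r)$, the type $C$ parabolic $W_\la$ is generated by $S\setminus\{s_{\la_1+\cdots+\la_i}\mid i\in[1,n]\}$, and since $\la_1+\cdots+\la_n=r$, this set excludes $s_r$. If I regard $(\la,0)$ as an element of $\La(n+1,r)$ and apply the construction of Remark \ref{Sjnr}(1), the type $B$ parabolic $W_{(\la,0)}$ is generated by exactly the same subset of $S$. Hence $W_{(\la,0)}=W_\la$ and $x_{(\la,0)}=x_\la$, so $\bigoplus_{\la\in\La(n,r)}x_\la\sH_\sZ(r)$ is literally a direct summand of the type $B$ Hecke module $\bigoplus_{\mu\in\La(n+1,r)}x_\mu\sH_\sZ(r)$, corresponding to those $\mu$ with $\mu_{n+1}=0$.

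Next I would identify the projection onto this summand with the idempotent $e$. For $d=1$ and $\mu=(\la,0)$, the longest element $d^+$ of $W_\mu\cdot 1\cdot W_\mu=W_\mu$ equals $w_{\mu,0}$, so the normalization $\up^{-\ell(d^+)+\ell(w_{\mu,0})}$ is $1$ and $[\diag(\hla^\dagger)]=\phi^1_{(\la,0),(\la,0)}$. This is the orthogonal idempotent projecting $\bigoplus_\mu x_\mu\sH_\sZ(r)$ onto $x_{(\la,0)}\sH_\sZ(r)$, so $e$ projects onto the type $C$ summand. Consequently
$$\SnriZ=e\SnrZ e=\End_{\sH_\sZ(r)}\bigg(\bigoplus_{\la\in\La(n,r)}x_\la\sH_\sZ(r)\bigg)=\SenrZ,$$
which yields a canonical algebra isomorphism and hence an algebra embedding $\fkf:\SenrZ\hookrightarrow\SnrZ$. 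To match $\fkf$ with $[A]\mapsto[A^\dagger]$ on basis elements, I would note that for $A=\fkm(\la,d,\mu)\in\Xien$ the matrix $A^\dagger$ has row sum $\hla^\dagger$ and column sum $\hmu^\dagger$, so under the type $B$ bijection $A^\dagger$ corresponds to the triple $((\la,0),d,(\mu,0))$. Because $W_{(\la,0)}=W_\la$ and $W_{(\mu,0)}=W_\mu$, the double coset $W_{(\la,0)}dW_{(\mu,0)}$ equals $W_\la dW_\mu$, so the double-coset longest element $d^+$ and the parabolic longest element $w_{\mu,0}$ have the same lengths whether computed in type $B$ or type $C$. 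The normalization exponents therefore agree, showing $[A^\dagger]\in\SnrZ$ is exactly the image of $[A]\in\SenrZ$ under $\SenrZ\cong e\SnrZ e\hookrightarrow\SnrZ$; injectivity is then automatic from the linear independence of the $[B]$-basis of $\SnrZ$.

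To finish, I would verify the image of $\fkf$ is all of $\SnriZ$. A basis of $e\SnrZ e$ consists of those $[B]$ with $B\in\Xin$ and $\ro(B),\co(B)\in\hLa^\dagger(n,r)$, so it suffices to show such $B$ must lie in $\Xini$. The row- and column-sum conditions force the $(n+1)$-st row and column of $B$ to sum to $1$; combined with the centrosymmetry $b_{i,j}=b_{2n+2-i,2n+2-j}$ built into $\Xin$, one has $b_{n+1,j}=b_{n+1,2n+2-j}$, so any off-center nonzero entry in this row would push the sum to at least $2$. Hence the only nonzero entry in the $(n+1)$-st row is $b_{n+1,n+1}=1$, and symmetrically for the column, which is precisely the defining condition for $B\in\Xini$. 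The main delicate point in the whole argument is the basis normalization bookkeeping in the middle step, but this turns out to be tautological once the coincidence $W_{(\la,0)}=W_\la$ is established.
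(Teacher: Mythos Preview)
Your proof is correct. The paper states this lemma without proof, treating it as a direct consequence of the observation in \cite[\S5]{BKLW} that $\SenrZ$ sits inside $\SnrZ$ as the centraliser subalgebra $e\SnrZ e$; your argument supplies exactly the details behind that observation, namely the coincidence $W_{(\la,0)}=W_\la$ of parabolic subgroups, the resulting identification of the type $C$ Hecke module as a direct summand of the type $B$ one, and the centrosymmetry argument pinning down the basis of $e\SnrZ e$ as $\{[A^\dagger]\mid A\in\Xienr\}$.
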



For $i,j\in[1,2n]$, let $\bfe_i=(0,\ldots,0,\underset{(i)}1,0,\ldots0)\in \Z^{2n}$ and
$E_{i,j}\in \text{Mat}_{2n}(\mathbb{N})$ the standard matrix units. Define 
\begin{equation}\label{Eij}
\bfe^\theta_i=\bfe_i+\bfe_{2n+1-i},\quad E^{\theta}_{i,j}:=E_{i,j}+E_{2n+1-i,2n+1-j}\in\Xi_{2n}.
\end{equation} 
Then  $\bfe_{2n+1-i}^\theta=\bfe_i^\theta=\ro(E^\theta_{i,j})=\co(E^\theta_{j,i})$ for all $i,j\in[1,2n]$ and $E^{\theta}_{i,j}=E^\theta_{2n+1-i,2n+1-j}$. In particular, $E_{n,n+1}^\theta=E_{n+1,n}^\theta$.

Similarly, for the standard basis elements $\bfe'_i\in \Z^{2n+1}$ and
$E'_{i,j}\in \text{Mat}_{2n+1}(\mathbb{N})$, define
\begin{equation}\label{E'ij}
\bfe^{\prime\theta}_i=\bfe'_i+\bfe'_{2n+2-i},\quad E^{\prime\theta}_{i,j}:=E'_{i,j}+E'_{2n+2-i,2n+2-j}\in\Xi_{2n+1}
\end{equation}

For $A \in \Xi_{2n}$, $h\,\in\,[1,n]$ and $p\in[1,2n]$, let
\begin{equation}\label{beta}
\aligned
\b_p(A,h)&=\sum_{j\geq p}a_{h,j}-\sum_{j> p }a_{h+1,j},\\
\b'_p(A,h)&=\sum_{j\leq p}a_{h+1,j}-\sum_{j< p}a_{h,j}.
\endaligned
\end{equation}
Note that $\b_p(A,h)$ is slightly different from the one defined in \cite[(4.0.1)]{DW} for matrices in $\Xi_{2n+1}$. Moreover, we have the symmetry property\footnote{There is no such a symmetry property for the similarly named functions in \cite[(4.0.1)]{DW}.}   $\b'_p(A,n)=\b_{2n+1-p}(A,n)$.

As set in \cite[\S5.1]{BKLW}, the multiplication formulas given in \cite[Lem.~3.2]{BKLW} or \cite[Lem. 4.1]{DW} continue to hold in $\SnriZ$ whenever $h\neq n$. An extra formula related to the generator $t$, replacing $e_n,f_n$, is given in \cite[Lem.~ A.13]{BKLW}. We now write these formulas in $\SenrZ$.

\begin{lemm} \label{keyMF}
The $\sZ$-algebra $\SenrZ$ has a basis $\{[A]\mid A\in\Xienr\}$. If $A=(a_{i,j})\in\Xienr$, $\l\in\La(n,r-1)$ and $1\leq h<n$, the following multiplication formulas hold in $\SenrZ$:
\begin{enumerate}
\item[(1)] $[E^\theta_{h,h+1}+\hla]\cdot[A]=\delta_{\bfe_{h+1}^\theta+\hla,\ro(A)}\displaystyle\sum_{p\in[1,2n]\atop a_{h+1,p}\geq1}\up^{\beta_p(A,h)}\overline{[\![a_{h,p}+1]\!]}[A+E^{\theta}_{h,p}-E^{\theta}_{h+1,p}]$
;
\item[(2)] $[E^\theta_{h+1,h}+\hla]\cdot[A]=\delta_{\bfe_{h}^\theta+\hla,\ro(A)}\displaystyle\sum_{p\in[1,2n]\atop a_{h,p}\geq 1}\up^{\b'_p(A,h)}\overline{[\![a_{h+1,p}+1]\!]}[A-E^{\theta}_{h,p}+E^{\theta}_{h+1,p}]$;

\item[(3)] $
[E^{\theta}_{n+1,n}+\hla]\cdot[A]=\delta_{\bfe^\theta_n+\hla,\ro(A)}\Big(c_A[A]+\!\!\displaystyle\sum_{p\in[1, 2n]\atop a_{n,p}\geq1} \up^{\b'_p(A,n)-\epsilon}\overline{[\![a_{n+1,p}+1]\!]}[A-E^{\theta}_{n,p}+E^{\theta}_{n+1,p}]\Big)$,
where $c_A$ and $\epsilon=\delta^\leq_{n+1,p}$ are defined by
\begin{equation}\label{c_A}
c_A=\up^{-\sum_{j\leq n}a_{n,j}}\big(\up^{\sum_{j\leq n}a_{n+1,j}}-\up^{-\sum_{j\leq n}a_{n+1,j}}\big) 
\text{ and }\delta^{\leq}_{i,j}=\begin{cases}1,&\text{ if }i\leq j;\\0,&\text{ if }i>j.\end{cases}
\end{equation}
 \end{enumerate}
\end{lemm}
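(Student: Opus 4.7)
The strategy is to transfer the known multiplication formulas in $\SnriZ$ (from \cite[Lem.~3.2]{BKLW}/\cite[Lem.~4.1]{DW} for (1) and (2), and \cite[Lem.~A.13]{BKLW} for (3)) through the algebra isomorphism $\fkf:\SenrZ\lra\SnriZ$, $[A]\mapsto [A^\dagger]$, defined in \eqref{fkf}. In particular, since $\fkf$ is an algebra isomorphism, proving each identity reduces to verifying the corresponding identity in $\SnriZ$ among the images $[(E^\theta_{h,h+1}+\hla)^\dagger]$, $[(E^\theta_{h+1,h}+\hla)^\dagger]$, $[(E^{\theta}_{n+1,n}+\hla)^\dagger]$ and $[A^\dagger]$.

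First I would record the effect of the dagger operation on the combinatorial data: $(E^\theta_{h,h+1})^\dagger=E^{\prime\theta}_{h,h+1}$ and $(E^\theta_{h+1,h})^\dagger=E^{\prime\theta}_{h+1,h}$ for $h<n$, while $(E^\theta_{n+1,n})^\dagger=E^{\prime\theta}_{n+2,n}$ (note the shift caused by inserting the extra middle row and column), and $\widehat\la^\dagger$ agrees with the definition in \eqref{Adag}. Then the row/column identities $\ro(A^\dagger)=\ro(A)^\dagger$ and $\co(A^\dagger)=\co(A)^\dagger$ show that the Kronecker delta conditions $\bfe^\theta_{h+1}+\hla=\ro(A)$ etc.\ transfer verbatim. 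Crucially, since $A\in\Xienr$ has the middle row/column profile prescribed by the dagger (entry $1$ at position $(n+1,n+1)$, $0$ elsewhere in that row/column), adding $E^{\prime\theta}_{h,p}-E^{\prime\theta}_{h+1,p}$ (for $h<n$) or $E^{\prime\theta}_{n+2,p}-E^{\prime\theta}_{n+1,p}$ never disturbs this profile, so every term on the right-hand side of the BKLW formula still lies in $\Xini$; hence $\fkf^{-1}$ applies and produces matrices in $\Xienr$.

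Next I would match the exponents $\b_p$ and $\b'_p$. A direct bookkeeping check shows that for $A\in\Xi_{2n}$ and $h<n$, the statistic $\beta_p(A^\dagger,h)$ of \cite[(4.0.1)]{DW} equals $\beta_p(A,h)$ in \eqref{beta} (the middle row/column contributes nothing for $h<n$), and similarly for the transposed statistic $\b'_p$; the index $p$ on the right-hand side can be restricted to $[1,2n]$ since the central row/column of $A^\dagger$ forces $a_{n+1,p}=0$ for $p$ in the inserted slot. For (3), the formula \cite[Lem.~A.13]{BKLW} produces two families of summands plus a ``diagonal'' term carrying the scalar $c_A$; here the calculation of $c_A$ follows from collecting the contributions of the middle row/column of $A^\dagger$, giving exactly $\up^{-\sum_{j\leq n}a_{n,j}}(\up^{\sum_{j\leq n}a_{n+1,j}}-\up^{-\sum_{j\leq n}a_{n+1,j}})$, and the exponent correction $\epsilon=\delta^\leq_{n+1,p}$ records whether the jump across the inserted middle column has been crossed in the summation defining $\b'_p$.

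The main obstacle will be item (3): the $t$-generator formula. Unlike (1) and (2), here $\SenrZ$ genuinely differs from its type-$B$ ancestor, because the symmetry $\b'_p(A,n)=\b_{2n+1-p}(A,n)$ collapses two series of terms in the BKLW formula into one series plus a diagonal correction. I expect the bulk of the work will be a careful bookkeeping argument showing that (a) the two summations in \cite[Lem.~A.13]{BKLW} over $p\leq n$ and $p>n$ combine (after applying the symmetry of entries of $A$ under $(i,j)\mapsto(2n+1-i,2n+1-j)$) into the single sum in (3), and (b) the leftover diagonal coefficient is exactly $c_A$ as displayed. Once these index and exponent identifications are carried out, applying $\fkf^{-1}$ to both sides yields (1)--(3).
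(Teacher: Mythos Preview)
Your overall strategy---transfer the BKLW/DW formulas in $\SnriZ$ through the isomorphism $\fkf$---is exactly the paper's approach, and your treatment of (1) and (2) is right.

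For (3), however, your description of how the details unfold is not quite accurate. The formula in \cite[Lem.~A.13]{BKLW} does not contain two separate families of summands that collapse via the symmetry $\b'_p(A,n)=\b_{2n+1-p}(A,n)$; rather it is a single sum over an index $p'\in[1,2n+1]$. What actually happens is the following. First, the left-hand side of \cite[Lem.~A.13]{BKLW} is not literally $[(E^\theta_{n+1,n}+\hla)^\dagger]\cdot[A^\dagger]$: by \cite[(A.9)]{BKLW} it contains an additional summand $\up^{-\ro(A^\dagger)_n}[A^\dagger]$, which must be moved to the right-hand side. Second, on the right-hand side the single sum over $p'$ splits according to whether $p'=n+1$ or not. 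For $p'\neq n+1$ one has $[A^\dagger-E^{\prime\theta}_{n,p'}+E^{\prime\theta}_{n+2,p'}]=[(A-E^{\theta}_{n,p}+E^{\theta}_{n+1,p})^\dagger]$ under the index shift $p=p'$ (if $p'\leq n$) or $p=p'-1$ (if $p'\geq n+2$), and the coefficient works out to $\up^{\b'_p(A,n)-\delta^\leq_{n+1,p}}\overline{[\![a_{n+1,p}+1]\!]}$; this directly gives the sum in (3). For $p'=n+1$, the middle row/column structure of $A^\dagger$ forces the corresponding matrix to be $A^\dagger$ itself, with coefficient $\up^{\sum_{j\leq n}a_{n+1,j}-\sum_{j\leq n}a_{n,j}}$. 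Combining this with the $-\up^{-\ro(A^\dagger)_n}[A^\dagger]$ moved from the left-hand side, and using $\ro(A^\dagger)_n=\sum_{j\leq n}a_{n,j}+\sum_{j\leq n}a_{n+1,j}$, yields precisely $c_A[A^\dagger]$. So the diagonal correction $c_A$ arises from these two sources, not from merging two half-sums. Once you adjust this bookkeeping, the argument goes through.
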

\begin{proof}If we choose $A:=A^\dagger$, $\wla:=\hla^\dagger$ in \cite[Lem. 4.1]{DW} for some $A\in\Xienr,\la\in\La(n,r)$, then both formulas in  \cite[Lem. 4.1(1)\&(2)]{DW} are closed in $\SnriZ$ for $h\in[1,n)$.
So,  (1) and (2) are the $\fkf$-inverse images of the two. 

To see (3), we use $A^\dagger=(a_{i,j}^\dagger)$ to replace $A$ in the displayed formula in \cite[Lem.~ A.13]{BKLW}, which holds in $\SnriZ$.
Thus, by using the notation in \eqref{E'ij}, each term on the RHS of the formula has the form $[A^\dagger-E^{\prime\theta}_{n,p'}+E^{\prime\theta}_{n+2,p'}]=[(A-E^{\theta}_{n,p}+E^{\theta}_{n+1,p})^\dagger]$ if $p'\neq n+1$ ($p=p'$ for $p'\leq n$ and $p=p'-1$ for $p'\geq n+2$) or $[A^\dagger]$ if $p'=n+1$ which has  coefficient
$$\up^{\sum_{j\leq p'}a_{n+2,j}^\dagger-\sum_{j<p'}a_{n,j}^\dagger-\sum_{j>n+1}\delta_{p',j}}
\overline{[\![a_{n+2,p'}^\dagger+1]\!]}=\begin{cases}
\up^{\b'_p(A,n)-\delta^\leq_{n+1,p}}\overline{[\![a_{n+1,p}+1]\!]},&\text{ if }p'\neq n+1;\\
\up^{\sum_{j\leq n}a_{n+1,j}-\sum_{j\leq n}a_{n,j}},&\text{ if }p'=n+1.
\end{cases}$$
Here, in the $p'=n+1$ case, we used the fact that column $n+1$ or row $n+1$ of $A^\dagger$ has zero entries except the $(n+1,n+1)$ entry.

On the other hand, by \cite[(A.9)]{BKLW}, the left hand side of  the formula in \cite[Lem.~ A.13]{BKLW} contains a summand $\up^{-\ro(A^\dagger)_n}[A^\dagger]$ which is moved to the right hand side. Thus, the coefficient of $[A^\dagger]$ is
$\up^{\sum_{j\leq n}a_{n+1,j}-\sum_{j\leq n}a_{n,j}}-\up^{-\sum_{i\in[1,2n]}a_{n,i}},$
which equals $c_A$ since $\sum_{n+1\leq i}a_{n,i}=\sum_{j\leq n}a_{n+1,j}$. Hence, (3) follows.
\end{proof}
The following values will be used in the sequel: for $h\in[1,n]$, $a\in\N$,
\begin{equation}\label{cE}
c_{aE^{\theta}_{h,h+1}}=c_{aE^{\theta}_{h+1,h}}=\begin{cases}0,&\text{ if }h\neq n;\\\up^a-\up^{-a},&\text{ if }h=n.\end{cases}
\end{equation}

For $A=(a_{i,j})\in\Xienr$ and $\nu=(\nu_i)\in\La(2n,m)$ ($m>0$), we set
\begin{equation}\label{order yi}
\nu\leq \row_h(A)\iff \nu_i\leq a_{h,i}\;\,\forall i\in[1,2n], \text{ where }\row_{h}(A)=(a_{h,1},\ldots,a_{h,2n}).
\end{equation}
When $h<n$, multiplication formulas in \cite[Thm.~3.7(1)\&(2)]{BKLW} are closed in $\SnriZ$. For later use, we record their counterpart in $\SenrZ$ as follows.

\begin{prop} \label{keyMFm} If $A=(a_{i,j})\in\Xienr$, $m>0$, $\l\in\La(n,r-m)$, and $1\leq h<n$, the following multiplication formulas hold in $\SenrZ$:
\begin{enumerate}
\item[(1)] $[mE^\theta_{h,h+1}+\hla][A]=\varepsilon\displaystyle\sum_{\nu\in\La(2n,m)\atop \nu\leq \row_{h+1}(A)}\up^{\beta_\nu(A,h)}\prod_{u=1}^{2n}\overline{\left[\!\!\left[a_{h,u}+\nu_u\atop \nu_u\right]\!\!\right]}[A+\sum_{u=1}^{2n}\nu_u(E^{\theta}_{h,u}-E^{\theta}_{h+1,u})]$,
where $\varepsilon=\delta_{m\bfe_{h+1}^\theta+\hla,\ro(A)}$ and $\b_\nu(A,h)=\sum_{j\geq p}a_{h,j}\nu_p-\sum_{j> p }a_{h+1,j}\nu_p+\sum_{j<p}\nu_j\nu_p$.
\item[(2)] $[mE^\theta_{h+1,h}+\hla][A]=\varepsilon'\displaystyle\sum_{\nu\in\La(2n,m)\atop \nu\leq \row_{h}(A)}\up^{\beta_\nu'(A,h)}\prod_{u=1}^{2n}\overline{\left[\!\!\left[a_{h+1,u}+\nu_u\atop \nu_u\right]\!\!\right]}[A+\sum_{u=1}^{2n}\nu_u(E^{\theta}_{h+1,u}-E^{\theta}_{h,u})]$,
where $\varepsilon'=\delta_{m\bfe_{h}^\theta+\hla,\ro(A)}$ and $\b'_\nu(A,h)=\sum_{j\leq p}a_{h+1,j}\nu_p-\sum_{j< p}a_{h,j}\nu_p+\sum_{j>p}\nu_j\nu_p.$
 \end{enumerate}
\end{prop}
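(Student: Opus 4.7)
The plan is to deduce Proposition~\ref{keyMFm} directly from the short multiplication formulas in the type~$B$ $q$-Schur algebra $\SnrZ$ established in \cite[Thm.~3.7(1)\&(2)]{BKLW}, by transferring along the algebra embedding $\fkf:\SenrZ\lra\SnriZ\subset\SnrZ$ from \eqref{fkf}. The hypothesis $h<n$ is crucial, as it ensures that the rows $h$ and $h+1$ of a daggered matrix $A^\dagger$ avoid the ``special'' $(n+1)$-st row/column of $\Xinli$, so all computations happen far away from where the $\dagger$-structure is pinned down.

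First, I would apply $\fkf$ to rewrite the left-hand side: since $h\in[1,n)$, we have $\fkf([mE^\theta_{h,h+1}+\hla])=[mE^{\prime\theta}_{h,h+1}+\hla^\dagger]$ and $\fkf([A])=[A^\dagger]$, where the right-hand sides use the type~$B$ notation from \eqref{E'ij} and \eqref{Adag}. Then I invoke the BKLW formula for the product $[mE^{\prime\theta}_{h,h+1}+\hla^\dagger][A^\dagger]$ in $\SnrZ$: this produces a sum indexed by $\nu'\in\La(2n+1,m)$ with $\nu'\leq\row_{h+1}(A^\dagger)$, each summand carrying a power of $\up$ and a product of quantum binomials involving the entries of $A^\dagger$.

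The next step is the key reduction: because $h<n$, the entry $a^\dagger_{h+1,n+1}=0$, so the constraint $\nu'\leq\row_{h+1}(A^\dagger)$ forces $\nu'_{n+1}=0$. Hence each $\nu'$ effectively comes from a $\nu\in\La(2n,m)$ with $\nu\leq\row_{h+1}(A)$ (using the natural index shift that skips position $n+1$), and the quantum binomial $\left[\!\!\left[a^\dagger_{h,n+1}+\nu'_{n+1}\atop\nu'_{n+1}\right]\!\!\right]=1$ drops out. Similarly, the resulting matrices $A^\dagger+\sum_{u}\nu'_u(E^{\prime\theta}_{h,u}-E^{\prime\theta}_{h+1,u})$ agree on the $(n+1)$-st row and column with $A^\dagger$ itself, so they lie in $\Xinri$ and are precisely the images under $(\;)^\dagger$ of $A+\sum_{u=1}^{2n}\nu_u(E^\theta_{h,u}-E^\theta_{h+1,u})\in\Xienr$. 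Matching the exponent $\b_{\nu'}(A^\dagger,h)$ with $\b_\nu(A,h)$ is then a direct bookkeeping check, using that the $(n+1)$-st column contributes nothing to the sums defining $\b$ when $\nu'_{n+1}=0$ and $a^\dagger_{h,n+1}=a^\dagger_{h+1,n+1}=0$. Applying $\fkf^{-1}$ term by term yields formula~(1). Formula~(2) is established by the symmetric argument with $\row_h$ in place of $\row_{h+1}$ and $\b'_\nu$ in place of $\b_\nu$.

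The main obstacle I anticipate is purely notational: aligning the type~$B$ indexing set $[1,2n+1]$ with the type~$C$ set $[1,2n]$ across the gap at $n+1$, and verifying that the exponent of $\up$ and the product of quantum binomials in the BKLW formula specialize exactly to the expressions $\b_\nu(A,h)$ and $\prod_u\overline{\left[\!\!\left[a_{h,u}+\nu_u\atop\nu_u\right]\!\!\right]}$ stated here. Once this dictionary is fixed, no new representation-theoretic input is required—this proposition is genuinely a ``transfer'' result, and the restriction $h<n$ is precisely what keeps the extra relations involving the generator $t$ (cf.\ Lemma~\ref{keyMF}(3) and \cite[Lem.~A.13]{BKLW}) from entering the computation.
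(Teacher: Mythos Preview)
Your proposal is correct and follows essentially the same approach as the paper: the paper simply remarks that for $h<n$ the formulas in \cite[Thm.~3.7(1)\&(2)]{BKLW} are closed in $\SnriZ$ and then records their counterpart in $\SenrZ$ via the isomorphism $\fkf$, without writing out a proof. Your detailed verification that $\nu'_{n+1}=0$ and that the resulting matrices lie in $\Xinri$ is exactly the content behind the phrase ``closed in $\SnriZ$''.
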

It seems too complicated to write down a formula for the product $[mE^\theta_{n+1,n}+\hla][A]$.

We now give an application of the multiplication formulas given in Lemma \ref{keyMF}. 
The following result allows us to transfer the $\SenrZ$-$\sH_\sZ(r)$-bimodule structure on $\sT_\sZ(n,r)$, after base change, to a $\bfUin$-$\bsH(r)$-bimodule structure on $\Omega_{2n}^{\otimes r}$ via \eqref{ev}, \eqref{bimod}, and \eqref{eta}.

\begin{theo}\label{rH action} The right $\bsH_\sZ(r)$-module structure on $\Om_{2n}^{\otimes r}$ defined by \eqref{H action} and \eqref{eta} commutes with the action of $\bfUin$.
In other words, the map $\rho_{r}^\imath:\bfUin\lra\End(\Om_{2n}^{\otimes r})$ defined in \eqref{rhoi} can be 
refined to 
\begin{equation}\label{rho3}
\rho_{r}^\imath=\rho_{n,r}^\imath:\bfUin\lra\End_{\bsH(r)}(\Om_{2n}^{\otimes r}).
\end{equation}
\end{theo}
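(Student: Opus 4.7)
The strategy is to realize the $\bfUin$-action on $\Omega_{2n}^{\otimes r}$ as \emph{left} multiplication by explicit elements of $\SenrQ$ (or of $f\SerrQ f$ when $n<r$) on the bimodule $\SenrQ e_\emptyset$ (resp.\ $f\SerrQ e_\emptyset$), transported along the isomorphism $\eta_r$ from \eqref{eta}. Since $\bsH(r)\cong e_\emptyset\SenrQ e_\emptyset$ acts by right multiplication and any left multiplication automatically commutes with a right action, such a realization immediately refines $\rho_r^\imath$ to the target \eqref{rho3}, proving the theorem.

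Concretely, for each generator $g\in\{d_j^{\pm1},e_h,f_h,t\}$ of $\bfUin$ I would produce a weighted sum $\tilde g\in\SenrQ$ of the form $\sum_{\la\in\La(n,r)} c_{g,\la}[X_g+\hla]$, with $X_g$ being $0$, $E^\theta_{h,h+1}$, $E^\theta_{h+1,h}$, or $E^\theta_{n+1,n}$ respectively (replacing $\hla$ by $\hla^\circ$ as in \eqref{la^o} when $n<r$), and with $c_{g,\la}\in\Q(\up)$ chosen so as to absorb the weight-dependent scalars $\vep_1(l),\vep_1'(l),\tau_0,\tau_1(l)$ appearing in Proposition \ref{U action}. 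The idempotent relation \eqref{weight idemp} ensures that only the summand indexed by the unique $\la$ with $\hla=\ro(A_\bsi)$ contributes to $\tilde g\cdot[A_\bsi]$, so the action on a basis vector is governed by a single instance of a multiplication formula from Lemma \ref{keyMF}.

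The second step is the term-by-term comparison. Applying Lemma \ref{keyMF}(1)--(3) to $[X_g+\hla]\cdot[A_\bsi]$ using the explicit shape of $A_\bsi$ from \eqref{Ai}, the resulting sum over $p\in[1,2n]$ splits into contributions from $p\in[1,r]$ and from $p\in[2n+1-r,2n]$; the middle range contributes nothing because those columns of $A_\bsi$ vanish. Converting the sum over $p$ into a sum over tensor positions $l\in[1,r]$ (with $i_l$ matched to the unique nonzero entry of the chosen column), these two $p$-ranges produce, respectively, the two summands in each of Proposition \ref{U action}(2)--(4). What remains is to verify that $\beta_p(A_\bsi,h)$, $\beta_p'(A_\bsi,h)$, and the diagonal coefficient $c_{A_\bsi}$ translate into $\vep_1(l)+\vep_2(l)$, $\vep_1'(l)+\vep_2'(l)$, and $\up^{\tau_0}$ respectively; this is a direct unwinding of \eqref{beta} and \eqref{Ai}, taking account of the $\up^{-1}$ in the middle summand of $\iota(t)$.

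The main difficulty lies with the $t$-generator. There the coideal embedding $\iota(t)$ is a sum of three pieces; the iterated comultiplication $\Delta^{(r-1)}$ interleaves each $F_n$ or $E_n$ with $\wK_n^{-1}$-factors that produce the local exponents $\tau_1(l)$; and Lemma \ref{keyMF}(3) carries a genuine diagonal correction $c_A[A]$ that must conspire with the prefactor from $\wK_n^{-1}$ to reproduce the ``scalar on $\om_\bsi$'' term of Proposition \ref{U action}(4). Matching these against the global exponent $\tau_0$ requires the identity $\sum_{j\leq n}a_{n+1,j}=\sum_{i>n}a_{n,i}$ together with the $\theta$-symmetry encoded in $\bsi\mapsto\widehat\bsi$ from \eqref{hat i}. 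Once this match is settled (the other generators being handled analogously but with less combinatorial overhead), the $\bsH(r)$-linearity of $\rho_r^\imath$ is a formal consequence of the bimodule structure on $\SenrQ e_\emptyset$.
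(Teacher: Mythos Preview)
Your proposal is correct and follows essentially the same approach as the paper: both realize the $\bfUin$-action on $\Om_{2n}^{\otimes r}$ as left multiplication by explicit elements of $\SenrQ$ on the bimodule $\SenrQ e_\emptyset$ (via $\eta_r$), invoke Lemma~\ref{keyMF} to match the resulting expressions term by term with Proposition~\ref{U action}, and single out the $t$-generator as the delicate case requiring the diagonal correction $c_{A_\bsi}$ together with the symmetry $\sum_{j\le n}a_{n+1,j}=\sum_{j>n}a_{n,j}$. The only minor deviation is in the treatment of $n<r$: you work directly inside $f\SerrQ f$ via the embedding~\eqref{Acirc}, whereas the paper instead embeds $\bfUin\hookrightarrow\bfU^\imath(r)$ and $\Om_{2n}^{\otimes r}\hookrightarrow\Om_{2r}^{\otimes r}$ to reduce to the already-proved $n\ge r$ case; both routes are equivalent.
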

\begin{proof}Recall the $\bsH(r)$-action on $\Om_{2n}^{\otimes r}$ via \eqref{H action} and the $\bsH(r)$-module isomorphisms $\eta_{r}$ in  \eqref{eta}.
By restriction to the subalgebra $\bsH(\fS_r)$, $\Om_{2n}^{\otimes r}$ becomes a right $\bsH(\fS_r)$-module
(cf. \cite[(14.6.4)]{DDPW08}) and the map $\rho_{r}$ in \eqref{rho} induces an algebra homomorphism, denoted by $\rho_{r}$ again,
\begin{equation}\label{rho2}
\rho_{r}:\bfU(\mathfrak{gl}_{2n})\lra\End_{\bsH(\fS_r)}(\Om_{2n}^{\otimes r}).
\end{equation}
We now prove that the restriction map $\rho^\imath_{r}$ sends $\bfUin$ into $\End_{\bsH(r)}(\Om_{2n}^{\otimes r})$.

We first assume $n\geq r$. In this case, $\eta_{r}$ induces an algebra isomorphism 
\begin{equation}\label{teta}
\tilde\eta_{r}:=\tilde\eta_{n,r}:\End_{\bsH(r)}(\Om_{2n}^{\otimes r})\overset\sim\lra\bsS^\imath(n,r).
\end{equation}
We claim that, under the linear isomorphism $\eta_{r}$ in \eqref{eta}, the action formulas on the basis $\{\omega_\bsi\mid\bsi\in I(2n,r)\}$ given in Proposition \ref{U action}(1)--(4) coincide with the action formulas of certain elements in $\bsS^\imath(n,r)$ on the basis $\{[A_\bsi]\mid\bsi\in I(2n,r)\}$. More precisely, we claim that, for all $j\in[1,n],h\in[1,n)$, and $\bsi\in I(2n,r)$ with $\wtt(\bsi)=\la$ (so that $\hla=\ro(A_\bsi)$),
\begin{itemize}
\item[(1)] $\eta_{r}(\rho_{r}^\imath(d_j).\om_\bsi)=\up^{-\la_j}[\hla]\cdot[A_\bsi]$;
\item[(2)] $\eta_{r}(\rho_{r}^\imath(e_h).\om_\bsi)=[E_{h+1,h}^\theta+\hla-\bfe_h^\theta]\cdot[A_\bsi]$;
\item[(3)] $\eta_{r}(\rho_{r}^\imath(f_h).\om_\bsi)=[E_{h,h+1}^\theta+\hla-\bfe_{h+1}^\theta]\cdot[A_\bsi]$;
\item[(4)] $\eta_{r}(\rho_{r}^\imath(t).\om_\bsi)=([E_{n+1,n}^\theta+\hla-\bfe_n^\theta]+\up^{-\la_n}[\hla])\cdot[A_\bsi]$.
\end{itemize}
Note that, if $\la_h=\ro(A_\bsi)_h=0$ in (2), or $\la_{h+1}=\ro(A_\bsi)_{h+1}=0$ in (3), then both sides are zeros since there are no components in $\bsi$ equal $h$ or $h+1$ in these cases.

By the claim, we see from \eqref{eta} that the $\bfUin$ action on $\Omega_{2n}^{\otimes r}$ commutes with the $\bsH(r)$ action transferred above. Hence, we have $\im(\rho^\imath_{r})\subseteq\End_{\boldsymbol{\bsH}(r)}(\Om_{2n}^{\otimes r})$ in this case.


We now prove (1)--(4) in the claim. Recall the definition of $A_\bsi$ in  \eqref{Ai}.

For
Part (1), it suffices to prove $\ro(A_\bsi)_j=\delta(j,\bsi)$. This is clear since
$$\aligned
\ro(A_\bsi)_j&=|\{l\mid l\in[1,r],a_{j,l}=1=\delta_{j,i_l}\}\cup\{l\mid l\in[2n-r+1,2n],a_{j,l}=1\}|\\
&=|\{l\mid l\in[1,r],i_l=j\}\cup\{ l\mid l\in[2n-r+1,2n],i_{2n+1-l}=2n+1-j\}|\\
&=|\{k\mid 1\leq k\leq r,i_k=j\}|+|\{k\mid 1\leq k\leq r,i_{k}=2n+1-j\}|=\delta(j,\bsi).
\endaligned
$$
Parts (2) and (3) can be easily checked by the short multiplication formulas in Lemma \ref{keyMF}(1)\&(2) by mimicking part of the proof of \cite[Thm.~7.1]{DW}.

Finally, we prove Part (4). By Lemma \ref{keyMF}(3), we have
$$\aligned
[E_{n+1,n}^\theta+\ro(A_\bsi)-\bfe_n^\theta]\cdot[A_\bsi]&=c_{A_\bsi}[A_\bsi]+\sum_{l\in[1,2n],a_{n,l}\geq 1}\up^{\b'_l(A_\bsi,n)-\delta^{\leq}_{n+1,l}}[A_\bsi-E^{\theta}_{n,l}+E^{\theta}_{n+1,l}]\\
&=c_{A_\bsi}[A_\bsi]+\sum_{\substack{l\in[1,r]\\a_{n,l}=\delta_{n,i_l}=1}}\up^{\b'_l(A_\bsi,n)}[A_\bsi-E^{\theta}_{n,l}+E^{\theta}_{n+1,l}]\\
&\quad\ +\ \sum_{\substack{l\in[2n-r+1,2n]\\a_{n,l}=\delta_{n+1,i_{2n+1-l}}=1}}\up^{\b'_l(A_\bsi,n)-1}[A_\bsi-E^{\theta}_{n,l}+E^{\theta}_{n+1,l}]\\
&=c_{A_\bsi}[A_\bsi]+\sum_{\substack{l\in[1,r]\\i_l=n}}\up^{\b'_l(A_\bsi,n)}[A_\bsi-E^{\theta}_{n,l}+E^{\theta}_{n+1,l}]\\
&\quad\ +\ \sum_{\substack{l\in[1,r]\\ i_{l}=n+1}}\up^{\b'_{2n+1-l}(A_\bsi,n)-1}[A_\bsi-E^{\theta}_{n,2n+1-l}+E^{\theta}_{n+1,2n+1-l}].
\endaligned
$$
We now compare this action with the action formula in Proposition \ref{U action}(4). First, we have
$$\aligned
\eta_r(\omega_{i_1}\cdots\omega_{i_{l-1}}\omega_{n+1}\omega_{i_{l+1}}\cdots\omega_{i_r})&=[A_\bsi-E^{\theta}_{n,l}+E^{\theta}_{n+1,l}],\;\;\text{ and }\;\\
\eta_r(\omega_{i_1}\cdots\omega_{i_{l-1}}\omega_{n}\omega_{i_{l+1}}\cdots\omega_{i_r})&=[A_\bsi-E^{\theta}_{n+1,l}+E^{\theta}_{n,l}]=[A_\bsi-E^{\theta}_{n,2n+1-l}+E^{\theta}_{n+1,2n+1-l}].\endaligned$$
It remains to verify that the corresponding coefficients are equal, i.e., to prove that
\begin{eqnarray*}
\begin{cases}
({\rm a})\;\;c_{A_\bsi}+\up^{-ro(A_{\bsi})_n}=\up^{\tau_0};&\\
({\rm b})\;\;\b'_l(A_\bsi,n)=\tau_1(l), &\text{ if }i_l=n;\\
({\rm c})\;\;\b'_{2n+1-l}(A_\bsi,n)=\tau_1(l)+1 &\text{ if }i_l=n+1.
\end{cases}
\end{eqnarray*} 
Since $c_{A_\bsi}=\up^{-\sum_{j\leq n}a_{n,j}}\big(\up^{\sum_{j\leq n}a_{n+1,j}}-\up^{-\sum_{j\leq n}a_{n+1,j}}\big)=\up^{\sum_{j\leq n}a_{n+1,j}-\sum_{j\leq n}a_{n,j}}-\up^{-ro(A_{\bsi})_n}$, it follows that
$c_{A_\bsi}+\up^{-ro(A_{\bsi})_n}=\up^{\sum_{j\leq n}a_{n+1,j}-\sum_{j\leq n}a_{n,j}}=\up^{|\{k\mid 1\leq k\leq r,i_k=n+1\}|-|\{k\mid 1\leq k\leq r,i_k=n\}|}=\up^{\tau_0},$ proving (a). 

For (b), this is the $i_l=n$ case:
\begin{eqnarray*}
&&\b'_l(A_\bsi,n)=\sum_{k\leq l}a_{n+1,k}-\sum_{k<l}a_{n,k}\\
&=&|\{k\mid 1\leq k\leq l,a_{n+1,k}=\delta_{n+1,i_k}=1\}|-|\{k\mid 1\leq k< l,a_{n,k}=\delta_{n,i_k}=1\}|\\
&=&|\{k\mid 1\leq k< l,i_k=n+1\}|-|\{k\mid 1\leq k< l,i_k=n\}|=\tau_1(l).
\end{eqnarray*}
Finally, for the $i_l=n+1$ case, we have
\begin{eqnarray*}
&&\b'_{2n+1-l}(A_\bsi,n)=\sum_{k\leq 2n+1-l}a_{n+1,k}-\sum_{k<2n+1-l}a_{n,k}\\
&=&|\{k\mid 1\leq k\leq r,a_{n+1,k}=1\}|+|\{k\mid 2n+1-r\leq k\leq 2n+1-l,a_{n+1,k}=1=a_{n,2n+1-k}\}|\\
&&-\ |\{k\mid 1\leq k\leq r,a_{n,k}=1\}|-|\{k\mid 2n+1-r\leq k< 2n+1-l,a_{n,k}=1=a_{n+1,2n+1-k}\}|\\
&=&|\{k\mid 1\leq k\leq r,i_k=n+1\}|+|\{k\mid l\leq k\leq r,i_k=n\}|\\
&&-\ |\{k\mid 1\leq k\leq r,i_k=n\}|-|\{k\mid l<k\leq r, i_k=n+1\}|\\
&=&|\{k\mid 1\leq k\leq l,i_k=n+1\}|-\ |\{k\mid 1\leq k< l,i_k=n\}|\\
&=&|\{k\mid 1\leq k< l,i_k=n+1\}|-\ |\{k\mid 1\leq k< l,i_k=n\}|+1=\tau_1(l)+1,
\end{eqnarray*}
proving (c) and hence (4). This completes the proof for the $n\geq r$ case.

Assume now $n<r$. We have proved that $\rho_{r,r}^\imath(\bfU^\imath(r))\subseteq\End_{\bsH(r)}(\Om_{2r}^{\otimes r})$. Consider the algebra embedding 
$\iota_1:\bfU^\imath(n)\lra \bfU^\imath(r)$ (resp., the space embedding $\Om_{2n}\to \Om_{2r}$) induced by the index embedding 
$$[1,n]\lra[1,r]\;(\text{resp., } [1,2n]\lra[1,2r]),\;i\lmt r-n+i,\forall i.$$
The latter induces an $\bsH(r)$-module embedding $\Om_{2n}^{\otimes r}\to\Om_{2r}^{\otimes r}$ so that its image is a direct summand of  the $\bsH(r)$-module $\Om_{2r}^{\otimes r}$. Thus, we have a centraliser subalgebra embedding
$$\iota_2:  \End_{\bsH(r)}(\Om_{2n}^{\otimes r})\lra\End_{\bsH(r)}(\Om_{2r}^{\otimes r})$$
whose image $\im(\iota_2)=(\tilde\eta_{r,r})^{-1}(f\bsS^\imath(r,r)f)$ (see \eqref{teta}), where $f=\sum_{\la\in\La(n,r)}[\widehat{\la^\circ}]$.
Now, one sees easily the inclusion  $\rho_{r,r}^\imath \iota_1(\bfU^\imath(n))\subseteq \iota_2\big( \End_{\bsH(r)}(\Om_{2n}^{\otimes r})\big)$. Hence,
$\rho_{r}^\imath(\bfU^\imath(n))\subseteq\End_{\bsH(r)}(\Om_{2n}^{\otimes r})$ where $\rho_{r}^\imath=\iota_2^{-1}\rho_{r,r}^\imath \iota_1$.
\end{proof}

We will see in \S6 that the map $\rho^\imath_r$ given in \eqref{rho3} is surjective.

In the next result, we identify $\End_{\bsH(r)}(\Omega^{\otimes r}_{2n})$ with
 $\bsS^\imath(n,r)$ via \eqref{teta}, For $j\in[1,n]$ and $A\in\{E_{j,j+1}^\theta,E^\theta_{j+1,j}\}$, let
 \begin{equation}\label{FirstAjr}
 O(-\bfe_j,r)=\sum_{\la\in\Lambda(n,r)}\up^{-\la_j}[\hla], \quad A(\bfl,r)=\sum_{\mu\in\La(n,r-1)}[A+\hmu],
\end{equation}

\begin{coro}\label{motivation ex}
The $\mathbb Q(\up)$-algebra homomorphism $\tilde\eta_r\circ\rho_{r}^\imath:\bfUin\to\End_{\bsH(r)}(\Omega^{\otimes r}_{2n})\overset\sim\to\SenrQ$ has the following images on generators: for all $h\in[1,n),j\in[1,n]$, 
$$d_j\mapsto O(-\bfe_j,r),\;e_h\mapsto  E^\theta_{h+1,h}(\bfl,r),\;f_h\mapsto  E^\theta_{h,h+1}(\bfl,r), \; t\mapsto E^\theta_{n+1,n}(\bfl,r)+O(-\bfe_n,r).$$
\end{coro}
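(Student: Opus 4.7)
My plan is to observe that this corollary is essentially a direct repackaging of parts (1)--(4) of the claim established in the proof of Theorem \ref{rH action}. That claim describes, for each generator $x \in \bfUin$ and each basis element $\om_\bsi \in \Omega_{2n}^{\otimes r}$, the image $\eta_r(\rho_r^\imath(x).\om_\bsi)$ as the product of a specific weight-dependent element of $\SenrQ$ with $[A_\bsi]$. I would upgrade these pointwise identities to a global identification of $\tilde\eta_r \circ \rho_r^\imath(x)$ with a fixed element of $\SenrQ$ by summing over the possible weights and invoking the orthogonality relation \eqref{weight idemp}, which says $[A][B]=0$ unless $\co(A) = \ro(B)$.

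Concretely, fixing $\bsi \in I(2n,r)$ with $\wtt(\bsi) = \la$ (so $\ro(A_\bsi) = \hla$), I would verify the four assignments as follows. For $d_j$, the sum
\[
O(-\bfe_j,r)\cdot[A_\bsi] = \sum_{\mu\in\Lambda(n,r)} \up^{-\mu_j}[\hmu][A_\bsi]
\]
collapses by \eqref{weight idemp} to the single term $\hmu = \hla$, yielding $\up^{-\la_j}[\hla][A_\bsi]$, exactly as in claim (1). For $e_h$, a quick computation gives $\ro(E_{h+1,h}^\theta) = \bfe_{h+1}^\theta$ and $\co(E_{h+1,h}^\theta) = \bfe_h^\theta$, so in
\[
E_{h+1,h}^\theta(\bfl, r) \cdot [A_\bsi] = \sum_{\mu\in\Lambda(n,r-1)}[E_{h+1,h}^\theta + \hmu][A_\bsi],
\]
the only surviving summand is determined by $\bfe_h^\theta + \hmu = \hla$, i.e.\ $\mu = \la - \bfe_h$, which exists in $\Lambda(n,r-1)$ iff $\la_h \geq 1$ and then produces $[E_{h+1,h}^\theta + \hla - \bfe_h^\theta][A_\bsi]$, matching claim (2); when $\la_h = 0$ both sides vanish. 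The case of $f_h$ is symmetric via claim (3).

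For $t$, claim (4) decomposes into a diagonal piece $\up^{-\la_n}[\hla][A_\bsi]$, handled exactly as for $d_n$ and contributing $O(-\bfe_n,r)$, together with an off-diagonal piece $[E_{n+1,n}^\theta + \hla - \bfe_n^\theta][A_\bsi]$, handled exactly as for $e_h$ specialised to $h=n$ and contributing $E_{n+1,n}^\theta(\bfl, r)$; adding the two gives the prescribed image of $t$. The case $n<r$ reduces to the $n\geq r$ case exactly as in the last paragraph of the proof of Theorem \ref{rH action}: all the identities above take place inside $f\bsS^\imath(r,r)f$ with $[A_\bsi]$ replaced by $[A_\bsi^\circ]$ and $\hla$ by $\widehat{\la^\circ}$. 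I do not anticipate a real obstacle---the corollary is essentially a bookkeeping consequence of work already done---and the only point requiring care is the row/column sum computation ensuring that \eqref{weight idemp} selects the correct unique term from each sum.
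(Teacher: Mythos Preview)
Your proposal is correct and follows essentially the same approach as the paper. The paper's proof simply asserts that the right-hand sides of claims (1)--(4) in the proof of Theorem~\ref{rH action} equal $O(-\bfe_j,r)[A_\bsi]$, $E^\theta_{h+1,h}(\bfl,r)[A_\bsi]$, $E^\theta_{h,h+1}(\bfl,r)[A_\bsi]$, and $\big(E^\theta_{n+1,n}(\bfl,r)+O(-\bfe_n,r)\big)[A_\bsi]$ respectively; you supply the explicit verification of this assertion via the orthogonality relation~\eqref{weight idemp}, which is exactly the intended mechanism.
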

\begin{proof}This follows easily from the equations (1)--(4) in the proof of the theorem since the right hand sides of (1)--(4) are equal to $O(-\bfe_j,r)[A_\bsi]$, $E^\theta_{h+1,h}(\bfl,r)[A_\bsi]$, $E^\theta_{h,h+1}(\bfl,r)[A_\bsi]$, and $\big(E^\theta_{n+1,n}(\bfl,r)+O(-\bfe_n,r)\big)[A_\bsi]$, respectively.
\end{proof}

\section{A triangular relation in $\SenrZ$}
In this section, we develop a triangular relation for two bases of $\SenrZ$. This is built on the determination of leading terms in certain multiplication formulas with respect to a preorder.
Recall the preorder $\preccurlyeq$ in \cite[(3.22)]{BKLW} defined on $\Xi_N$ ($N=2n$ or $2n+1$): for $A=(a_{i,j}),B=(b_{i,j})\in\Xi_N$,
\begin{equation}\label{precN}
A\preccurlyeq B \Longleftrightarrow
\sum_{i\leq u;j\geq v}a_{i,j}\leq \sum_{i\leq u;j\geq v}b_{i,j}, \mbox{ for all $1\leq u<v\leq N$}.
\end{equation}
Equivalently, for $N=2n$, we have
\begin{equation}\label{prec}
A\preccurlyeq B\Longleftrightarrow\begin{cases}
\sum_{i\leq u;j\geq v}a_{i,j}\leq \sum_{i\leq u;j\geq v}b_{i,j}, &\mbox{ for all }u\in[1,n],v\in[1,2n],u<v;\\
\sum_{i\geq u;j\leq v}a_{i,j}\leq \sum_{i\geq u;j\leq v}b_{i,j},&\mbox{for all }u,v\in[1,n], u>v.
\end{cases}
\end{equation}
We write $A\prec B$ if $A\preccurlyeq B$ and $B\not\preccurlyeq A$. Note that, with \eqref{Adag}, we have $A\preccurlyeq B\iff A^\dagger\preccurlyeq B^\dagger$.

We first derive some multiplication formulas in $\SnrZ$ (or more precisely in its centraliser subalgebra $\SnriZ$).
With the notations $\bfe^{\prime\theta}_n, E^{\prime\theta}_{i,j}$ given in \eqref{E'ij} and by the display \cite[(5.4)]{BKLW}, the following formula holds in $\SnriZ$: for $m\geq0$, $\la\in\La(n,r)$, and $\beta(i):=i\l_{n}-{i+1\choose2}$, 
$$\aligned{}
[mE^{\prime\theta}_{n,n+1}+\hla^\dagger]&\cdot[m E^{\prime\theta}_{n+1,n}+\hla^\dagger]=\sum^m_{i=0}\up^{\b(i)}
\overline{\begin{bmatrix}\!\!\begin{bmatrix}\l_n+i\\i\end{bmatrix}\!\!\end{bmatrix}}[(m-i)E^{\prime\theta}_{n+2,n}+\hla^\dagger+i\bfe^{\prime\theta}_{n}]\\
&=[mE^{\prime\theta}_{n+2,n}+\hla^\dagger]+\sum^m_{i=1}\up^{-\frac{i(i+1)}{2}}
\begin{bmatrix}\l_n+i\\i\end{bmatrix}[(m-i)E^{\prime\theta}_{n+2,n}+\hla^\dagger+i\bfe^{\prime\theta}_{n}],
\endaligned$$
where we use (\ref{0.n-m}) to get the second equality. Thus, for $i\in[1,m]$, we have
$$\aligned{}
[(m&-i)E^{\prime\theta}_{n,n+1}+\hla^\dagger+i\bfe^{\prime\theta}_{n}]\cdot[(m-i)E^{\prime\theta}_{n+1,n}+\hla^\dagger+i\bfe^{\prime\theta}_{n}]=\\
&[(m-i)E^{\prime\theta}_{n+2,n}+\hla^\dagger+i\bfe^{\prime\theta}_{n}]+\sum^{m-i}_{j=1}\up^{-\frac{j(j+1)}2}
\begin{bmatrix}\l_n+i+j\\j\end{bmatrix}[(m-i-j)E^{\prime\theta}_{n+2,n}+\hla^\dagger+(i+j)\bfe^{\prime\theta}_{n}].
\endaligned$$
Hence, for $0\leq i\leq m$, there exist $p_i\in\sZ$ with $p_0=1$ such that
\begin{equation}\label{trump}
[mE^{\prime\theta}_{n+2,n}+\hla^\dagger]=\sum_{i=0}^mp_i\big([(m-i)E^{\prime\theta}_{n,n+1}+\hla^\dagger+i\bfe^{\theta}_{n}]\cdot[(m-i)E^{\prime\theta}_{n+1,n}+\hla^\dagger+i\bfe^{\theta}_{n}]\big).
\end{equation}
We first derive a transferable leading term formula in $\SnriZ$.
The ``lower terms'' in an expression of the form $[M]+(\text{lower terms})_\preccurlyeq$ represents a linear combination of $[B]$ with $B\prec M$.
\begin{lemm}\label{TLn}
Suppose that $A\in \Xinri$ satisfies that:  $a_{n,k}\geq m>0$ for some $k\in[1,n]$ and $a_{n,j}=0$, $\forall\ j<k$ and $j\geq 2n+2-k$.\footnote{The condition $a_{n,j}=0$, $\forall\ j\geq 2n+2-k$ is equivalent to $a_{n+2,j}=0$, $\forall j\leq k$.}
Then, for any $\wla=\ro(A)-m\bfe_n^{\prime\theta}$, we have in $\SnriZ$
\begin{equation}\label{trump1}[mE^{\prime\theta}_{n+2,n}+\wla]\cdot[A]=[A-mE^{\prime\theta}_{n,k}+mE^{\prime\theta}_{n+2,k}]+({\rm lower\ terms})_{\preccurlyeq}.
\end{equation}
\end{lemm}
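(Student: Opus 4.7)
The plan is to use equation \eqref{trump} (derived in the excerpt for $\hla^\dagger$, but whose derivation only uses the ``middle block'' structure and so extends verbatim to arbitrary compatible $\wla$) to express
$$[mE^{\prime\theta}_{n+2,n}+\wla] = \sum_{i=0}^m p_i\, [(m-i)E^{\prime\theta}_{n,n+1}+\mu_i]\cdot[(m-i)E^{\prime\theta}_{n+1,n}+\mu_i], \quad p_0 = 1,$$
where $\mu_i := \wla + i\bfe^{\prime\theta}_n$; the two factors live in $\SnrZ$, but the whole sum lands in $\SnriZ$. Right-multiplying by $[A]$, the problem reduces to evaluating
$$Z_i := [(m-i)E^{\prime\theta}_{n,n+1}+\mu_i]\cdot[(m-i)E^{\prime\theta}_{n+1,n}+\mu_i]\cdot[A]$$
and isolating the leading term of $\sum_i p_i Z_i$ with respect to $\preccurlyeq$.

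For each $i$, I would first compute $[(m-i)E^{\prime\theta}_{n+1,n}+\mu_i]\cdot[A]$ using the $m$-step short multiplication formula in $\SnrZ$ for $h=n$, available from \cite[Thm.~3.7(2)]{BKLW} (or by iterating the $m=1$ case). This expresses the product as a weighted sum over compositions $\nu\in\La(2n+1,m-i)$ with $\nu\leq\row_n(A)$ of basis elements $[A - \sum_u\nu_u E^{\prime\theta}_{n,u}+\sum_u\nu_u E^{\prime\theta}_{n+1,u}]$. The hypothesis that $a_{n,j}=0$ for $j<k$ and for $j\geq 2n+2-k$ confines the support of $\nu$ to $[k,2n+1-k]$. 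Applying $[(m-i)E^{\prime\theta}_{n,n+1}+\mu_i]$ next (via the analogous short formula) moves each placed unit from row $n+1$ either back to row $n$, via the $E'_{n,n+1}$ summand of $E^{\prime\theta}_{n,n+1}$, or onward to row $n+2$, via the $E'_{n+2,n+1}$ summand.

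The unique $\preccurlyeq$-maximal basis element is $M := A - mE^{\prime\theta}_{n,k}+mE^{\prime\theta}_{n+2,k}$, produced only by the extremal path: $i=0$, $\nu = m\bfe'_k$ at the first step, and placing all $m$ units into row $n+2$ column $k$ at the second step. Along this path, the relevant quantum binomials $\overline{[\![\cdot]\!]}$ all reduce to $\overline{[\![1]\!]}=1$ (because $a_{n+1,k}=0$ from $A\in\Xinri$ and $a_{n+2,k}=0$ by the footnote hypothesis), and the $\b$- and $\b'$-exponents from the two steps cancel against each other, so the coefficient of $[M]$ collapses to $p_0=1$. Every other admissible contribution yields a matrix strictly below $M$: terms with $i>0$ move fewer units to row $n+2$ and are therefore dominated; and within $i=0$, any column choice at either step other than $k$ produces a matrix that fails equality in the partial-sum inequality \eqref{prec} at one of the witnessing pairs $(u,v)=(n,2n+2-k)$ or $(u,v)=(n+2,k)$.

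The main obstacle is the combinatorial bookkeeping: verifying that the leading-coefficient cancellation really collapses to $1$ (which relies crucially on the ``wide column of zeros'' in rows $n$ and $n+2$ of $A$ to force trivial binomial factors along the extremal path), and verifying $\preccurlyeq$-strictness for every non-extremal column choice and every $i>0$ by checking \eqref{prec} at the appropriate witnessing $(u,v)$. Once these combinatorial checks are completed, all sub-leading basis elements collect into $(\text{lower terms})_{\preccurlyeq}$ and the identity \eqref{trump1} follows.
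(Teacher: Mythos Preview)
Your proposal is correct and follows essentially the same route as the paper: decompose $[mE^{\prime\theta}_{n+2,n}+\wla]$ via \eqref{trump}, then for each summand apply the two short multiplication formulas \cite[Thm.~3.7(a),(b)]{BKLW} in succession and track the $\preccurlyeq$-leading term. One small correction: the leading coefficient equals $1$ not because the $\beta$-exponents of the two steps cancel against each other, but because under the hypotheses ($a_{n+1,k}=0$, $a_{n+2,k}=0$, and $a_{n,j}=0$ for $j<k$) each step \emph{separately} produces its leading term with exponent $0$ and trivial binomial factor; this is exactly what the paper records in \eqref{eq-A12} and the display following it, and is what your bookkeeping will confirm.
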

\begin{proof}
Since $A-(m-i)E^{\prime\theta}_{n,k}+(m-i)E^{\prime\theta}_{n+2,k}\prec A-mE^{\prime\theta}_{n,k}+mE^{\prime\theta}_{n+2,k}$ for all $i\in[1,m]$, by \eqref{trump}, it suffices to prove that for every $m$ with $a_{n,k}\geq m\geq1$, the leading term in
the product $[mE^{\prime\theta}_{n,n+1}+\wla][mE^{\prime\theta}_{n+1,n}+\wla]\cdot[A]$ is $[A-mE^{\prime\theta}_{n,k}+mE^{\prime\theta}_{n+2,k}]$.

Let $N=2n+1$ and let $\bft^{(0)}=(t_i)=(t_1,t_2,\dots,t_{N})\in\La(N,m)$, where $t_k=m$ and $t_i=0$ for $i\neq k$. Let $\La(N,m)_0=\La(N,m)-\{ \bft^{(0)}\}$. By \cite[Thm.~ 3.7(b)]{BKLW}, we have
\begin{equation}\label{eq-A12}
\aligned{}
[m E^{\prime\theta}_{n+1,n}&+\wla]*[A]=[A-mE^{\prime\theta}_{n,k}+mE^{\prime\theta}_{n+1,k}]+\sum_{\substack{\bft\in\La(N,m)_0\\\bft\leq
\text{row}_n(A)}}f_\bft[A-\sum_{1\leq u\leq N}t_{u}(E^{\prime\theta}_{n,u}-E^{\prime\theta}_{n+1,u})],\endaligned
\end{equation}
where $f_\bft=\up^{\b''(\bft)}\prod_{j<n+1}\overline{\begin{bmatrix}\!\!\begin{bmatrix}t_j+t_{N+1-j}\\t_j\end{bmatrix}\!\!\end{bmatrix}}$ with $\b''(\bft)$ as defined in \cite[(3.21)]{BKLW}, and $\bft\leq
\text{row}_n(A)$ means $t_u\leq a_{n,u}$ for all $u$. 
Note that we have $t_{n+1}=0$, since $a_{n,n+1}=0$.
 (So, the missing factor $\prod_{i=0}^{t_{n+1}-1}\frac{\overline{[\![2i+2]\!]}}{\overline{[\![i+1]\!]}}$ in $f_t$ is 1.)
Note also that, since $a_{n,j}=0$, $\forall\ j<k$ and $j\geq N+1-k$, it follows that that $t_{u}=0$ for all $u<k$ and $u\geq N+1-k$.

Moreover, one checks easily by the definition that $A-mE^{\prime\theta}_{n,k}+mE^{\prime\theta}_{n+1,k}$ is the leading term on the right hand side of \eqref{eq-A12}.

Let $\bft^{(0')}=(t_i)=(t_1,t_2,\dots,t_{N})$, where $t_{N+1-k}=m$ and $t_i=0$ for $i\neq N+1-k$.
By \cite[Thm.~3.7(a)]{BKLW}, we have, for $A'=A-mE^{\prime\theta}_{n,k}+mE^{\prime\theta}_{n+1,k}=(a'_{ij})$, $\La(N,m)_{0'}:=\La(N,m)-\{ \bft^{(0')}\}$ and some $g_{\bft'}\in\sZ$,
\begin{equation*}\aligned
&[m E^{\prime\theta}_{n,n+1}+\wla]\cdot[A']=[mE^{\prime\theta}_{n,n+1}+\wla]\cdot[A-mE^{\prime\theta}_{n+2,N+1-k}+mE^{\prime\theta}_{n+1,N+1-k}]\\
&=[A-mE^{\prime\theta}_{n+2,N+1-k}+mE^{\prime\theta}_{n+1,N+1-k}+mE^{\prime\theta}_{n,N+1-k}-mE^{\prime\theta}_{n+1,N+1-k}]\\
&\qquad\qquad\qquad+
\sum_{\bft'\in\La(N,m)_{0'}\atop \bft'+(\bft')^\tau\leq \text{row}_{n+1}(A')}g_{\bft'}[A-mE^{\prime\theta}_{n,k}+mE^{\prime\theta}_{n+1,k}+\sum_u{t'_{u}}(E^{\prime\theta}_{n,\mu}-E^{\prime\theta}_{n+1,u})]\\
&=[A-mE^{\prime\theta}_{n,k}+mE^{\prime\theta}_{n+2,k}]+\!\!
\sum_{\bft'\in\La(N,m)_{0'}\atop \bft'+(\bft')^{\tau}\leq \text{row}_{n+1}(A')}g_{\bft'}[A-mE^{\prime\theta}_{n,k}+mE^{\prime\theta}_{n+1,k}+\sum{t'_{\mu}}(E^{\prime\theta}_{n,\mu}-E^{\prime\theta}_{n+1,\mu})],\\
\endaligned
\end{equation*}
where 
$\bft'+(\bft')^\tau=(t'_1+t'_N,t'_2+t'_{N-1},\ldots,t'_N+t'_1)$, and $\bft'+(\bft')^\tau\leq \text{row}_{n+1}(A')$ means $t'_{u}+t'_{N+1-u}\leq a'_{n+1,u}$ for all $u$.
 
Since $a'_{n+1,u}=0$ for all $u\neq k,N+1-k,n+1$, it follows that
$  t'_{u}=0\ {\rm for \ all\ }\mu\neq k,N+1-k.  $
Thus, $m=t'_{k}+t'_{N+1-k}\leq a'_{n+1,k}=m$ and
\begin{eqnarray*}\label{tkk}
&&A-mE^{\prime\theta}_{n,k}+mE^{\prime\theta}_{n+1,k}+\sum_u{t'_{u}}(E^{\prime\theta}_{n,u}-E^{\prime\theta}_{n+1,u})\\
&=&A-mE^{\prime\theta}_{n,k}+mE^{\prime\theta}_{n+1,k}+t'_{k}(E^{\prime\theta}_{n,k}-E^{\prime\theta}_{n+1,k})+{t'_{N+1-k}}(E^{\prime\theta}_{n,N+1-k}-E^{\prime\theta}_{n+1,N+1-k})\nonumber\\
&=&A-mE^{\prime\theta}_{n,k}+mE^{\prime\theta}_{n+1,k}+t'_{k}E^{\prime\theta}_{n,k}+{t'_{N+1-k}}E^{\prime\theta}_{n,N+1-k}-(t'_k+t'_{N+1-k})E^{\prime\theta}_{n+1,k}\\
&=&A-(m-t'_k)E^{\prime\theta}_{n,k}+{(m-t'_k)}E^{\prime\theta}_{n,N+1-k}\prec A-mE^{\prime\theta}_{n,k}+mE^{\prime\theta}_{n+2,k}.
\end{eqnarray*}

Now consider a lower term in (\ref{eq-A12}) of the form
$$A''=(a''_{ij}):= A-\sum_{u=1}^Nt_{u}(E^{\prime\theta}_{n,u}-E^{\prime\theta}_{n+1,u})= A-\sum_{u=k}^{N+1-k}t_{u}(E^{\prime\theta}_{n,u}-E^{\prime\theta}_{n+1,u}).$$ 
The product $[mE^{\prime\theta}_{n,n+1}+\wla]\cdot[A'']$
is a linear combination of basis elements of the form
\begin{eqnarray}\label{eq-lv}
&&[A-\sum_{u=1}^Nt_{ u}(E^{\prime\theta}_{n, u}-E^{\prime\theta}_{n+1, u})+\sum_{v=1}^N l_{v}(E^{\prime\theta}_{n,v}-E^{\prime\theta}_{n+1,v})],
\end{eqnarray}
where all $l_v\in\N$ satisfy that $\sum l_v=m$ and $l_v+l_{N+1-v}\leq a''_{n+1,v}=t_v+t_{N+1-v}$ {\it for all} $v$. These, together with the fact that $\sum_v t_v=m$, force $l_v+l_{N+1-v}=t_{v}+t_{N+1-v}$ (forcing $l_v=0$ for all $v<k$ or $v>N+1-k$). Thus, $t_{n+1}=0$ implies $l_{n+1}=0$, and $\sum_{u=1}^Nt_{ u}E^{\prime\theta}_{n+1, u}=\sum_{u=1}^n(t_u+t_{N+1-u})E^{\prime\theta}_{n+1,u}=\sum_{v=1}^Nl_vE^{\prime\theta}_{n+1, v}$. Hence, the matrix in
(\ref{eq-lv}) belongs to $\Xinri$ or \eqref{eq-lv} becomes
$$ [A-\sum_{u=k}^{N+1-k}t_{u}E^{\prime\theta}_{n,u}+\sum_{v=k}^{N+1-k} l_{v}E^{\prime\theta}_{n,v}]\in \SnriZ .$$
which is clearly a term lower than $[A-mE^{\prime\theta}_{n,k}+mE^{\prime\theta}_{n+2,k}]$ (as $\bft\neq \bft^{(0)}$ or $t_k<m$). 

This completes the proof of the lemma.\end{proof}

We now return to the setup for $\SenrZ$.
Recall the dominance order on $\La(N,m)$: for $\mu,\nu\in\La(N,m)$,
\begin{equation}\label{dom order}
\mu\unlhd\nu\iff \mu_1\leq\nu_1,\mu_1+\mu_2\leq\nu_1+\nu_2,\ldots, \mu_1+\cdots+\mu_{N}\leq\nu_1+\cdots+\nu_{N},
\end{equation}
Observe that, if $\mu\unlhd\nu$, then
$
\mu_N=m-(\mu_1+\cdots+\mu_{N-1})\geq m-(\nu_1+\cdots+\nu_{N-1})=\nu_N$, $\mu_N+\mu_{N-1}\geq 
\nu_N+\nu_{N-1},\ldots, \mu_N+\cdots+\mu_1\geq \nu_N+\cdots+\nu_1$, Thus, if we set $\la^\tau:=(\la_N,\ldots,\la_1)$ for every $\la=(\la_1,\ldots,\la_N)\in\La(N,m)$, then we have
$$\mu\unlhd\nu\iff \mu^\tau\unrhd\nu^\tau.$$

For $h\in[1,n]$, recall \eqref{order yi} and let
$$\La(2n,m)_{h}:=\{\nu\in\La(2n,m)\mid \nu\leq \row_{h}(A)\}\neq\emptyset,$$
Parts (1) and (2) of the following result is a generalisation of the leading term formulas in \cite[Lem.~3.9]{BKLW}.

\begin{prop}\label{LThn}
Let $A=(a_{i,j})\in\Xienr$, $h\in[1,n]$, $\la\in\La(n,r)$ and $m$ a positive integer. Then the following formulas with leading terms hold in $\SenrZ$:
\begin{itemize}
\item[(1)] 
If $h\neq n$ and $A$ satisfies $a_{h,j}=0$, $a_{h+1,k}>0$, and $a_{h+1,j'}=0$ for all $j\geq k$, $j'>k$, 
then, for $\hla=\ro(A)-m\bfe^\theta_{h+1}$,
\begin{equation}\label{LT1a}
[mE_{h,h+1}^\theta+\hla]\cdot[A]=[A+\sum_{j=1}^k\nu_j^{(0)}(E_{h,j}^\theta-E_{h+1,j}^\theta)]+\text{\rm(lower terms)}_\preccurlyeq,
\end{equation}
where $\nu^{(0)}$ is the least element of $(\La(2n,m)_{h+1},\unlhd)$. In particular, if $a_{h+1,k}\geq m$, then
$\nu^{(0)}=(0,\ldots,0,\underset{(k)}m,0,\ldots,0)$ and \eqref{LT1a} becomes
\begin{equation}\label{LT1b}
[mE_{h,h+1}^\theta+\hla]\cdot[A]=[A+m(E_{h,k}^\theta-E_{h+1,k}^\theta)]+\text{\rm(lower terms)}_\preccurlyeq.
\end{equation}
\item[(2)] If $h\neq n$ and $A$ satisfies $a_{h,k}>0$, $a_{h,j}=0=a_{h+1,j'}$ for all $j< k$, $j'\leq k$, 
then, for $\hla=\ro(A)-m\bfe^\theta_{h}$,
\begin{equation}\label{LT2a}
[mE_{h+1,h}^\theta+\hla]\cdot[A]=[A+\sum_{j=k}^{2n}\nu^{(0')}_j(E_{h+1,j}^\theta-E_{h,j}^\theta)]+\text{\rm(lower terms)}_\preccurlyeq.
\end{equation}
where $\nu^{(0')}$ is the largest element of $(\La(2n,m)_{h},\unlhd)$. In particular, if $a_{h,k}\geq m$, then
$\nu^{(0')}=(0,\ldots,0,\underset{(k)}m,0,\ldots,0)$ and \eqref{LT2a} becomes
\begin{equation}\label{LT2b}
[mE_{h+1,h}^\theta+\hla]\cdot[A]=[A+m(E_{h+1,k}^\theta-E_{h,k}^\theta)]+\text{\rm(lower terms)}_\preccurlyeq.
\end{equation}
\item[(3)] If $A$ satisfies $a_{n,k}\geq m>0$ for some $k\in[1,n]$ and $a_{n,j}=0$, $\forall\ j<k$ and $j\geq 2n+1-k$,
then, for any $\hla=\ro(A)-m\bfe_n^\theta$, we have in $\SenrZ$
$$[mE^{\theta}_{n+1,n}+\hla]\cdot[A]=[A+m(E^{\theta}_{n+1,k}-E^{\theta}_{n,k})]+({\rm lower\ terms})_{\preccurlyeq}.$$
\end{itemize}
\end{prop}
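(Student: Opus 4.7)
The statement splits into two qualitatively different cases. Parts~(1) and~(2) concern the Chevalley-like generators $E^\theta_{h,h+1}$ and $E^\theta_{h+1,h}$ for $h<n$, which admit the general product expansions already recorded in Proposition~\ref{keyMFm}; the remaining work is to single out the leading term among these summands. Part~(3), which involves the ``exceptional'' generator $E^\theta_{n+1,n}$, has no such internal expansion in $\SenrZ$ (Proposition~\ref{keyMFm} does not cover $h=n$), so the plan is to transfer the result from the parent algebra $\SnriZ$ via the isomorphism $\fkf$ of~\eqref{fkf}, invoking Lemma~\ref{TLn}.

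For part~(1), expand $[mE^\theta_{h,h+1}+\hla]\cdot[A]$ via Proposition~\ref{keyMFm}(1) as $\sum_\nu c_\nu[B_\nu]$, where $B_\nu:=A+\sum_{j=1}^{2n}\nu_j(E^\theta_{h,j}-E^\theta_{h+1,j})$ and $\nu$ ranges over $\La(2n,m)_{h+1}$; the hypothesis $a_{h+1,j'}=0$ for $j'>k$ forces every such $\nu$ to be supported on $[1,k]$. The goal is to establish $B_\nu\preccurlyeq B_{\nu^{(0)}}$ with strict inequality whenever $\nu\ne\nu^{(0)}$, which identifies $[B_{\nu^{(0)}}]$ as the unique leading term. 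To verify the preorder, inspect $D_\nu:=B_\nu-B_{\nu^{(0)}}$ entrywise: its nonzero entries live only in rows $h,h+1$ and their $\theta$-mirrors $2n+1-h,2n-h$. Running through both clauses of~\eqref{prec}, the row-$h$ and row-$(h+1)$ contributions cancel unless $u'$ equals $h$ (in Case~1) or $h+1$ (in Case~2), while the $\theta$-mirror contributions either lie outside the permitted range $u'\in[1,n]$ or cancel amongst themselves. The two surviving inequalities collapse, after using $\sum_j\nu_j=m$, to partial-sum comparisons of the form $\sum_{j<v'}\nu^{(0)}_j\le\sum_{j<v'}\nu_j$, which is exactly the definition of $\nu^{(0)}\unlhd\nu$; strictness for $\nu\ne\nu^{(0)}$ follows by choosing the smallest $v'$ at which the partial sums differ. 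Part~(2) is then proved by the mirror argument using Proposition~\ref{keyMFm}(2), interchanging the roles of rows $h$ and $h+1$ and letting the \emph{largest} element $\nu^{(0')}$ of $(\La(2n,m)_h,\unlhd)$ play the role of $\nu^{(0)}$.

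For part~(3), an index chase through~\eqref{Adag} yields $\fkf([mE^\theta_{n+1,n}+\hla])=[mE^{\prime\theta}_{n+2,n}+\hla^\dagger]$ and $\fkf([A])=[A^\dagger]$, and the conditions imposed on $A$ translate verbatim (using $a^\dagger_{n,j}=a_{n,j}$ for $j\le n$ and $a^\dagger_{n,j}=a_{n,j-1}$ for $j\ge n+2$) to the hypotheses of Lemma~\ref{TLn} applied to $A^\dagger$. Lemma~\ref{TLn} then produces a leading term $[A^\dagger-mE^{\prime\theta}_{n,k}+mE^{\prime\theta}_{n+2,k}]$ in $\SnriZ$, and the same index identification shows this equals $\fkf([A+m(E^\theta_{n+1,k}-E^\theta_{n,k})])$. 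Since $\preccurlyeq$ is preserved under $\dagger$ (as noted just after~\eqref{prec}), applying $\fkf^{-1}$ transports both the leading term and the lower-term estimate back into $\SenrZ$.

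The principal obstacle is the case analysis in parts~(1) and~(2): four moving rows interacting with two inequality clauses of~\eqref{prec} yield a combinatorially dense verification, and it is not a priori obvious that both clauses collapse to the \emph{same} dominance comparison on $\nu$. Once one recognises that only the boundary indices $u'=h,h+1$ survive, the rest is just the definition of $\unlhd$. Part~(3) itself carries no genuine new difficulty beyond what Lemma~\ref{TLn} already absorbs; its separate treatment is forced by the absence of an $h=n$ counterpart of the multiplication formulas.
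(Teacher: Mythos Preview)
Your proposal is correct and follows essentially the same approach as the paper. For parts~(1) and~(2) you expand via Proposition~\ref{keyMFm}, then reduce the preorder comparison to the dominance relation $\nu^{(0)}\unlhd\nu$; the paper does the same, though it works directly with the single-clause form~\eqref{precN} split into $u\in[1,n]$ and $u\in[n+1,2n]$ rather than the two clauses of~\eqref{prec} that you use. Your treatment of part~(3) as the $\fkf$-pullback of Lemma~\ref{TLn} is exactly what the paper does (in a single sentence). One minor point in your favour: you explicitly address the strict inequality $B_\nu\prec B_{\nu^{(0)}}$ for $\nu\neq\nu^{(0)}$, which the paper leaves implicit.
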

\begin{proof} Part (3) is the $\fkf$-pullback of \eqref{trump1}.
We now prove (1). The proof of (2) is symmetric.

By the hypothesis,  each $\nu\in\La(2n,m)_{h+1}$ has the form $\nu=(\nu_1,\ldots,\nu_k,0,\ldots,0)$. Thus, by Proposition \ref{keyMFm}(1),
the left hand side of \eqref{LT1a} is a linear combination of $[A_\nu]$, where 
$A_\nu=A+\sum_{j=1}^k\nu_j(E_{h,j}^\theta-E_{h+1,j}^\theta)$. Putting $A_\nu=(m_{i,j})$ and $A_{\nu^{(0)}}=(m_{i,j}^{(0)})$, we have for all  $i<j$,
$$m_{i,j}=\begin{cases}a_{i,j}+\nu_j(\delta_{h,i}-\delta_{h+1,i}),&\text{ if }i\in[1,n];\\
a_{i,j}+\nu_{2n+1-j}(\delta_{2n+1-h,i}-\delta_{2h-h,i}),&\text{ if }i\in[n+1,2n].\\
\end{cases}$$
We now check \eqref{precN}.  For all $u<v$ with $u\in[1,n]$, since $\sum_{i\leq u}(\delta_{h,i}-\delta_{h+1,i})=\begin{cases} 1,&\text{ if }u=h;\\0,&\text{ otherwise.}\end{cases}$, it follows that
$$
\sum_{i\leq u<v\leq j}(m_{i,j}^{(0)}-m_{i,j})=\sum_{j\geq v}(\nu^{(0)}_j-\nu_j)\sum_{i\leq u}(\delta_{h,i}-\delta_{h+1,i})
=\begin{cases}\sum_{j=v}^k(\nu^{(0)}_{j}-\nu_{j}),&\text{if }u=h< v\leq k;\\
0,&\text{otherwise,}\end{cases}$$
which is non-negative since $(\nu^{(0)})^\tau\unrhd\nu^\tau$.

For all $u<v$ with $u\in[n+1,2n]$, since $\sum_{n<i\leq u}(\delta_{2n+1-h,i}-\delta_{2n-h,i})=\begin{cases} -1,&\text{ if }u=2n-h;\\
0,&\text{ otherwise }\end{cases}$, we have
$$\aligned
\sum_{i\leq u<v\leq j}(m_{i,j}^{(0)}-m_{i,j})&=\sum_{n< i\leq u\atop j\geq v}(m_{i,j}^{(0)}-m_{i,j})
=\sum_{j\geq v}(\nu^{(0)}_{2n+1-j}-\nu_{2n+1-j})\\
&\cdot\sum_{n<i\leq u}(\delta_{2n+1-h,i}-\delta_{2n-h,i})=\begin{cases}-\sum_{j'\leq 2n+1-v}(\nu^{(0)}_{j'}-\nu_{j'}),&\text{ if }u=2n-h\\
0,&\text{ otherwise,}\end{cases}
\endaligned$$
which is non-negative as $\nu^{(0)}\unlhd\nu$. This completes the proof of (1).
\end{proof} 
Note that the two special cases in \eqref{LT1b} and \eqref{LT2b} are extracted from \cite[Lem.~3.9]{BKLW} via the isomorphism $\fkf$ in \eqref{fkf}.\footnote{We modified their ``$\cdots=R$'' condition to a ``$\cdots\geq m$'' condition.}
Note also that the least/largest elements $\nu^{(0)}$ and $\nu^{(0')}$ in $\La(2n,m)_{h+1}$ and $\La(2n,m)_{h}$, respectively, have the form
$$\aligned
\nu^{(0)}&=(0,\ldots,0,a_{h+1,j}-a,a_{h+1,j+1},\ldots,a_{h+1,k},0,\ldots,0)\\
\nu^{(0')}&=(0,\ldots,0,a_{h,k},a_{h,k+1},\ldots,a_{h,j'+1},a_{h,j'}-a',0,\ldots,0),
\endaligned$$
where $j$ (resp. $j'$) is the index such that $\sum_{i=j+1}^ka_{h+1,i}< m\leq\sum_{i=j}^ka_{h+1,i}$ (resp., $\sum_{i=k}^{j'+1}a_{h,i}< m\leq\sum_{i=k}^{j'}a_{h,i}$) and 
$a=\sum_{i=j}^{k}a_{h+1,i}-m$ (resp., $a'=\sum_{i=k}^{j'}a_{h,i}-m$).

Let 
 \begin{equation}\label{set F}
 \mathscr T_{N}=\{(i,h,j)\mid 1\leq j\leq h<i\leq N\}.
 \end{equation}
  We order the set as in \cite[Thm.~3.10]{BKLW}:
\begin{equation}\label{order leq}
(i,h,j)\leq(i',h',j')\iff i<i'\text{ or }i=i',j<j'\text{ or }i=i',j=j', h>h'.
\end{equation}
For example, $\mathscr T_{4}$ has the following order:
$$\mathscr T_{4}=\{(2,1,1),(3,2,1),(3,1,1),(3,2,2),(4,3,1),(4,2,1),(4,1,1),(4,3,2),(4,2,2),(4,3,3)\}.$$

Like the construction in \cite[Thm.3.10]{BKLW}, for $A\in \Xienr$ and the largest element $(N,N-1,N-1)$ in $\mathscr T_{2n}$, where $N=2n$, let $D_{N,N-1,N-1}=\diag(\co(A)-a_{N,N-1}\bfe^\theta_{N-1})$ so that $\co(D_{N,N-1,N-1}+a_{N,N-1}E^\theta_{N,N-1})=\co(A)$. Inductively, suppose $D_{i',h',j'}$ is defined and $(i,h,j)$ is an immediate predecessor of $(i',h',j')$, then the diagonal matrix $D_{i,h,j}$ is uniquely defined by the equation
$$\co(D_{i,h,j}+a_{i,j}E_{h+1,h}^\theta)=\ro(D_{i',h',j'}+a_{i',j'}E_{h'+1,h'}^\theta).$$
Note that, for the least element $(2,1,1)$ in $\mathscr T_{2n}$, $D_{2,1,1}$ is the diagonal matrix uniquely defined by  
$\co(D_{2,1,1}+a_{2,1}E_{2,1}^\theta)=\ro(D_{3,2,1}+a_{3,1}E_{3,2}^\theta).$ In particular, $\ro(D_{2,1,1}+a_{2,1}E_{2,1}^\theta)=\ro(A)$.

The following result is the type C counterpart of  similar results for type B in \cite[Thm. 3.10]{BKLW} and for type $A$ in \cite[3.5]{BLM}. In \cite[\S5.4]{BKLW}, a similar triangular relation is developed via the twin products $[D_{i,n+1,j}+a_{i,j}E^{\prime\theta}_{n+2,n+1}][D_{i,n,j}+a_{i,j}E^{\prime\theta}_{n+1,n}]$ within the centraliser subalgebra $\SnriZ$ of $\SnrZ$. Our version is independent of $\SnriZ$ and simplify 
their version by using only the leading term of a twin product, i.e.,  by dropping a long tail. This becomes possible thanks to the new leading term formulas in Proposition \ref{LThn}(3).

\begin{theo}\label{TR}
For any $A=(a_{i,j})\in\Xienr$, the following triangular relation holds in $\SenrZ$:
\begin{eqnarray}\label{TR0}
\sfm(A):=\prod_{(i,h,j)\in(\mathscr T_{2n},\leq)}[D_{i,h,j}+a_{i,j}E^{\theta}_{h+1,h}]=[A]+(\text{lower terms})_\preccurlyeq.
\end{eqnarray}
\end{theo}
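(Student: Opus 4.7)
The plan is to proceed by descending induction on the total order \eqref{order leq}. First enumerate $\mathscr T_{2n}$ as $\tau_1<\tau_2<\cdots<\tau_T$, write $\tau_s=(i_s,h_s,j_s)$, and set $M_s:=[D_{\tau_s}+a_{i_s,j_s}E^\theta_{h_s+1,h_s}]$, so that $\sfm(A)=Q_1$ where $Q_s:=M_sM_{s+1}\cdots M_T$. Define matrices $B_s\in\Xienr$ inductively from $s=T$ downward by
\[B_T:=D_{\tau_T}+a_{i_T,j_T}E^\theta_{h_T+1,h_T},\qquad B_s:=B_{s+1}+a_{i_s,j_s}(E^\theta_{h_s+1,j_s}-E^\theta_{h_s,j_s}),\]
so that each $B_s$ has row/column sums matching those of $Q_s$ (ensured by the inductive construction of $D_{\tau_s}$), and one checks that the telescoping gives $B_1=A$. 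The key claim, to be proved by descending induction on $s$, is
\[Q_s=[B_s]+(\text{lower terms})_\preccurlyeq,\]
whose base case $s=T$ is immediate since $Q_T=M_T=[B_T]$.

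For the inductive step, I would write $Q_s=M_s\cdot[B_{s+1}]+\sum_{C\prec B_{s+1}}c_C\,M_s[C]$ and handle each piece. For the principal term $M_s\cdot[B_{s+1}]$, apply Proposition \ref{LThn}(2) with $m=a_{i_s,j_s}$ and $k=j_s$ when $h_s<n$, or Proposition \ref{LThn}(3) when $h_s=n$. The hypotheses---$(B_{s+1})_{h_s,j_s}\geq m$, and $(B_{s+1})_{h_s,j}=0$ for $j<j_s$ together with $(B_{s+1})_{h_s+1,j'}=0$ for $j'\leq j_s$ when $h_s<n$ (resp.\ the $h_s=n$ analogue for (3))---are verified combinatorially from the ordering: within each sub-block of fixed $(i_0,j_0)$, the right-to-left processing progressively shifts the column-$j_0$ mass $a_{i_0,j_0}$ from the diagonal position $(j_0,j_0)$ down through $(j_0+1,j_0),\ldots,(i_0,j_0)$; and when moving to a subsequent sub-block, the earlier transient column-$j_0$ entries have already reached their destination $(i_0,j_0)$, so the required zero-pattern in rows $h_s$ and $h_s+1$ of $B_{s+1}$ persists. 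The special cases \eqref{LT2b} and Proposition \ref{LThn}(3) then yield $M_s[B_{s+1}]=[B_s]+(\text{lower})_\preccurlyeq$.

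For the lower-order contributions, I would use Proposition \ref{keyMFm}(2) (or its $h=n$ analogue, obtainable from \cite[Lem.~A.13]{BKLW} via the isomorphism $\fkf$ of \eqref{fkf}) to expand $M_s[C]$ for each $C\prec B_{s+1}$. Each resulting basis element is indexed by a matrix obtained from $C$ by a row-$h_s$-to-row-$(h_s+1)$ migration of total weight $m$, and a direct monotonicity check on the partial sums in \eqref{prec} shows that this elementary migration is compatible with $\preccurlyeq$: if $C\prec B_{s+1}$, then every matrix appearing in $M_s[C]$ is $\prec B_s$. The main obstacle lies in these two bookkeeping tasks---the combinatorial verification of the zero-patterns needed for Proposition \ref{LThn} at every step, and the $\preccurlyeq$-monotonicity under the row-migration moves---both of which are systematic consequences of \eqref{order leq} and \eqref{prec} but require careful case analysis. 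The advantage of this approach over the twin-product method of \cite[\S5.4]{BKLW} is that the single-factor formula of Proposition \ref{LThn}(3) uniformly handles the $h_s=n$ case alongside $h_s<n$, removing the need to decompose $M_s$ into a twin product when $h_s=n$.
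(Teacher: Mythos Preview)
Your proposal is correct and follows essentially the same approach as the paper: the paper's proof is deliberately brief, stating only that one ``repeatedly us[es] the leading term formulas in Proposition \ref{LThn}'' and illustrating this with the $n=2$ example, while you have made the descending induction and the intermediate matrices $B_s$ explicit. One minor imprecision: for the lower-term analysis at $h_s=n$ you cite an ``$h=n$ analogue'' of Proposition \ref{keyMFm}(2) via \cite[Lem.~A.13]{BKLW}, but that lemma only gives $m=1$ and the paper explicitly notes that a closed formula for $[mE^\theta_{n+1,n}+\hla][A]$ is not available; the correct device is the decomposition \eqref{trump} together with \cite[Thm.~3.7]{BKLW} inside $\SnriZ$, exactly as in the proof of Lemma \ref{TLn}, which already shows that all non-leading contributions land strictly below the target.
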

\begin{proof} Similar to that of the type $A/B$ case, the proof is standard by repeatedly using the leading term formulas in Proposition \ref{LThn}.
We use the example for $n=2$ above to illustrate it. From the set $\mathscr T_4$ above, there are 10 factors in the product: 
 \begin{equation*}\label{TRex}
 \aligned{}
 [D_{2,1,1}+a_{2,1}E^{\theta}_{2,1}]
 &\cdot_{9}[D_{3,2,1}+a_{3,1}E^{\theta}_{3,2}]\cdot_8[D_{3,1,1}+a_{3,1}E^{\theta}_{2,1}]\cdot_7[D_{3,2,2}+a_{3,2}E^{\theta}_{3,2}]\\
 &\cdot_6[D_{4,3,1}+a_{4,1}E^{\theta}_{1,2}]\cdot_5[D_{4,2,1}+a_{4,1}E^{\theta}_{3,2}]\cdot_4[D_{4,1,1}+a_{4,1}E^{\theta}_{2,1}]\\
& \cdot_3[D_{4,3,2}+a_{4,2}E^{\theta}_{1,2}]\cdot_2[D_{4,2,2}+a_{4,2}E^{\theta}_{3,2}]\cdot_1[D_{4,3,3}+a_{4,3}E^{\theta}_{1,2}]\cdot_0[\co(A)]
\endaligned
 \end{equation*}
 Here we have used the fact $E^\theta_{h+1,h}=E^\theta_{4-h,5-h}$. We make the product in the order as indicated. Each step gives a leading term by the lemma above. The following 10 matrices is the leading term of the 10 multiplications ordered from 0 to 9. Recall $a_{i,j}=a_{5-1,5-j}$.
$$
\left(\begin{smallmatrix}\bullet&\ &a_{12}&&\cdot&&\cdot\\
&&{\;\uparrow_0}&&&&\\
\cdot&&\bullet&&\cdot&&\cdot\\
&&&&&&\\
\cdot&&\cdot&&\bullet&&\cdot\\
&&&&\;\downarrow^0&&\\
\cdot&&\cdot&&a_{43}&&\bullet\\
\end{smallmatrix}\right),
\left(\begin{smallmatrix}\bullet&\ &a_{12}&&\cdot&&\cdot\\
&&&&&&\\
\cdot&&\bullet&&a_{13}&&\cdot\\
&&\;\downarrow^1&&{\;\uparrow_1}&&\\
\cdot&&a_{42}&&\bullet&&\cdot\\
&&&&&&\\
\cdot&&\cdot&&a_{43}&&\bullet\\
\end{smallmatrix}\right),
\left(\begin{smallmatrix}
\bullet&\ &a_{12}&&a_{13}&&\cdot\\
&&&&{\;\uparrow_2}&&\\
\cdot&&\bullet&&\cdot&&\cdot\\
&&&&&&\\
\cdot&&\cdot&&\bullet&&\cdot\\
&&\;\downarrow^2&&&&\\
\cdot&&a_{42}&&a_{43}&&\bullet\\
\end{smallmatrix}\right),
\left(\begin{smallmatrix}
\bullet&\ &a_{12}&&a_{13}&&\cdot\\
\;\downarrow^3&&&&&&\\
a_{41}&&\bullet&&\cdot&&\cdot\\
&&&&&&\\
\cdot&&\cdot&&\bullet&&a_{14}\\
&&&&&&{\;\uparrow_3}\\
\cdot&&a_{42}&&a_{43}&&\bullet\\
\end{smallmatrix}\right),
\left(\begin{smallmatrix}
\bullet&\ &a_{12}&&a_{13}&&\cdot\\
&&&&&&\\
\cdot&&\bullet&&\cdot&&a_{14}\\
\;\downarrow^4&&&&&&{\;\uparrow_4}\\
a_{41}&&\cdot&&\bullet&&\cdot\\
&&&&&&\\
\cdot&&a_{42}&&a_{43}&&\bullet\\
\end{smallmatrix}\right),
$$
$$
\left(\begin{smallmatrix}
\bullet&\ &a_{12}&&a_{13}&&a_{14}\\
&&&&&&{\;\uparrow_5}\\
\cdot&&\bullet&&\cdot&&\cdot\\
&&&&&&\\
\cdot&&\cdot&&\bullet&&\cdot\\
\;\downarrow^5&&&&&&\\
a_{41}&&a_{42}&&a_{43}&&\bullet\\
\end{smallmatrix}\right),
\left(\begin{smallmatrix}
\bullet&\ &a_{12}&&a_{13}&&a_{14}\\
&&&&&&\\
\cdot&&\bullet&&a_{23}&&\cdot\\
&&\;\downarrow^6&&\;\uparrow_6&&\\
\cdot&&a_{32}&&\bullet&&\cdot\\
&&&&&&\\
a_{41}&&a_{42}&&a_{43}&&\bullet\\
\end{smallmatrix}\right),
\left(\begin{smallmatrix}
\bullet&\ &a_{12}&&a_{13}&&a_{14}\\
\;\downarrow^7&&&&&&\\
a_{31}&&\bullet&&a_{23}&&\cdot\\
&&&&&&\\
\cdot&&a_{32}&&\bullet&&a_{24}\\
&&&&&&\;\uparrow_7\\
a_{41}&&a_{42}&&a_{43}&&\bullet\\
\end{smallmatrix}\right),
\left(\begin{smallmatrix}
\bullet&\ &a_{12}&&a_{13}&&a_{14}\\
&&&&&&\\
\cdot&&\bullet&&a_{23}&&a_{24}\\
\;\downarrow^8&&&&&&\;\uparrow_8\\
a_{31}&&a_{32}&&\bullet&&\cdot\\
&&&&&&\\
a_{41}&&a_{42}&&a_{43}&&\bullet\\
\end{smallmatrix}\right),
$$
$$
\left(\begin{smallmatrix}
\bullet&\ &a_{12}&&a_{13}&&a_{14}\\
\;\downarrow^9&&&&&&\\
a_{21}&&\bullet&&a_{23}&&a_{24}\\
&&&&&&\\
a_{31}&&a_{32}&&\bullet&&a_{34}\\
&&&&&&\;\uparrow_9\\
a_{41}&&a_{42}&&a_{43}&&\bullet\\
\end{smallmatrix}\right)=A,
$$
Here the $j$th $\bullet$ on the diagonal of the first matrix is the sum of $j$th column of $A$ which decreases to $\bullet=a_{j,j}$ in the last matrix, an arrow $\uparrow_i$ or $\downarrow^i$ below or above an entry tells how the entry is moved up or down, and $i$ indicates the $i$th multiplication. Note that steps 1,4,6, and 8 used the leading term formula in Proposition \ref{LThn}(3).
\end{proof}

\begin{coro}The $\sZ$-algebra $\SenrZ$ is generated by the elements $[\hla]$ $(\la\in\La(n,r))$, $[E^\theta_{h,h+1}+\hmu]$ $(h\in[1,2n), \mu\in\Lambda(n,r-1))$, and has a new basis $\{\sfm(A)\mid A\in\Xi_{2n,2r}\}$ which is triangularly related to the basis $\{[A]\mid A\in\Xi_{2n,2r}\}$. 
\end{coro}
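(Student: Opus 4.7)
The corollary has two components: a basis assertion and a generation claim. The plan is to derive the basis part directly from Theorem~\ref{TR}, then establish the generation by a nested induction.

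For the basis: the relation $\sfm(A) = [A] + (\text{lower terms})_\preccurlyeq$ in Theorem~\ref{TR} expresses each $\sfm(A)$ as a $\sZ$-linear combination of the $[B]$'s in which $[A]$ appears with coefficient $1$ and every other $[B]$ satisfies $B\prec A$. Choosing any total order on the finite set $\Xi_{2n,2r}$ that refines the preorder $\preccurlyeq$ makes the transition matrix between $\{[A]\}$ and $\{\sfm(A)\}$ unitriangular over $\sZ$, hence invertible. Since $\{[A]\mid A\in\Xi_{2n,2r}\}$ is a $\sZ$-basis by Lemma~\ref{Snr basis}(2), it follows that $\{\sfm(A)\mid A\in\Xi_{2n,2r}\}$ is a $\sZ$-basis as well.

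For the generation: let $\mathcal B$ denote the $\sZ$-subalgebra of $\SenrZ$ generated by the listed elements, and prove $[A]\in\mathcal B$ for every $A\in\Xi_{2n,2r}$ by (outer) induction along the chosen total order. The triangular relation gives $[A]=\sfm(A)-\sum_{B'\prec A}c_{A,B'}[B']$ with $c_{A,B'}\in\sZ$, and the outer induction places the sum in $\mathcal B$, so it suffices to put $\sfm(A)\in\mathcal B$. As $\sfm(A)$ is by construction a product of factors $[D_{i,h,j}+a_{i,j}E^\theta_{h+1,h}]$ together with $[\co(A)]$, one is reduced to proving $[D+aE^\theta_{h+1,h}]\in\mathcal B$ for every diagonal $D$ and every $a\in\N$. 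When $a=0$ this is a listed $[\hla]$; when $a=1$ the identity $E^\theta_{h+1,h}=E^\theta_{2n-h,2n+1-h}$ rewrites the factor as $[E^\theta_{h',h'+1}+\hmu]$ with $h'=2n-h\in[1,2n)$, once again a listed generator; and when $a\geq 2$ I would run an inner induction on $a$, multiplying $[E^\theta_{h+1,h}+\hla]\cdot[D'+(a-1)E^\theta_{h+1,h}]$ via Lemma~\ref{keyMF} and extracting the dominant term using Proposition~\ref{LThn}, noting that every correction is strictly $\preccurlyeq$-lower and therefore already in $\mathcal B$ by the outer induction.

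The hard part will be controlling the $\sZ$-coefficients in the inner induction: a single application of Lemma~\ref{keyMF}(2) isolates $[D+aE^\theta_{h+1,h}]$ only up to a factor of $\overline{[\![a]\!]}$, a quantum integer which is not a unit in $\sZ$. The plan to circumvent this is to exploit Proposition~\ref{LThn} whose leading term has coefficient exactly $1$: by combining several products of $m=1$ factors with appropriately chosen diagonal weights, one can produce $[D+aE^\theta_{h+1,h}]$ with unit leading coefficient modulo strictly $\preccurlyeq$-smaller basis elements, which lie in $\mathcal B$ by the outer induction. Interleaving the outer induction on $A$ with this inner induction on $a$, and arranging the bookkeeping so that every auxiliary correction is provably $\preccurlyeq$-smaller, is expected to be the main technical hurdle of the proof.
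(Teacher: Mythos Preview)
Your basis argument is correct and is exactly what the paper intends: Theorem~\ref{TR} gives a unitriangular $\sZ$-valued transition matrix from $\{[A]\}$ to $\{\sfm(A)\}$, so the latter is a $\sZ$-basis. The paper offers no proof of this corollary, treating both assertions as immediate consequences of Theorem~\ref{TR}.

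The generation argument, however, has a genuine gap which you yourself locate but do not close. The outer induction correctly reduces the problem to showing that each factor $[D_{i,h,j}+a_{i,j}E^\theta_{h+1,h}]$ of $\sfm(A)$ lies in $\mathcal B$; one checks that every such factor is $\preccurlyeq A$, with strict inequality \emph{except} when $A$ itself has the form $D+mE^\theta_{h+1,h}$. So everything hinges on proving $[D+mE^\theta_{h+1,h}]\in\mathcal B$ for $m\geq 2$. Your proposed fix via Proposition~\ref{LThn} is circular: those leading-term formulas take $[mE^\theta_{h+1,h}+\hla]$ as the \emph{left} factor, which is precisely the element you are trying to generate. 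And ``combining several products of $m=1$ factors'' cannot work either: a direct computation with Lemma~\ref{keyMF}(2) shows that for $h\neq n$, the product $[E^\theta_{h+1,h}+\hla]\cdot[D'+(m-1)E^\theta_{h+1,h}]$ has exactly one term, with coefficient $\up^{\beta}\,\overline{[\![m]\!]}$, so iterating gives an $\overline{[\![m]\!]}^!$--multiple of $[D+mE^\theta_{h+1,h}]$, and $\overline{[\![m]\!]}$ is not a unit in $\sZ$.

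In fact the generation claim, read literally with only the $m=1$ elements, appears to be a slight misstatement: the analogous type-$A$ assertion already fails in $\sS_\sZ(2,2)$, where $e_{(2,0)}\sS_\sZ(2,2)e_{(0,2)}$ is free of rank one on $[2E_{1,2}]$, but every product of $m=1$ generators landing there is a $\sZ$-multiple of $[2]\,[2E_{1,2}]$. What \emph{does} follow immediately from Theorem~\ref{TR} is generation of $\SenrZ$ by the idempotents $[\hla]$ together with all $[mE^\theta_{h,h+1}+\hmu]$ for $m\geq 1$, since each $\sfm(A)$ is a product of such elements; this is also consistent with the paper's later use of divided-power generators in $\UinZ$ (\S8). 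You should either amend the generating set accordingly or supply an argument specific to type $C$ showing that the extra $h=n$ generator compensates for the missing divided powers.
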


The following result requires the generalised leading term in Proposition \ref{LThn}(1)\&(2) and will be needed in \S7.

\begin{coro}\label{TR1}
Maintain the notations in Theorem \ref{TR}. If several factors of the form $[D_{i,n,j}+a_{i,j}E^\theta_{n+1,n}]$ in the product (\ref{TR0}) are replaced by $[D_{i,n,j}'+(a_{i,j}-s)E^\theta_{n+1,n}]$ with $0<s\leq a_{i,j}$, then the resulting product is either 0 or a linear combination of $[B]$ with $B\prec A$.
\end{coro}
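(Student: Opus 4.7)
The strategy is to mimic the inductive calculation in the proof of Theorem \ref{TR}, carrying out the modified product in the same order and tracking, at each stage, how the leading matrix in the modified calculation compares with the one in the unmodified calculation.

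First, I would reduce to the nonzero case: by the weight compatibility \eqref{weight idemp}, the product vanishes unless $\co$ of each right factor matches $\ro$ of its left neighbor. If this fails, the first alternative of the corollary holds. Otherwise each $D'_{i,n,j}$ is forced, and the product can be evaluated by repeatedly applying the leading term formulas in Proposition \ref{LThn}.

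Next, let $M^{(k)}$ denote the sequence of leading matrices arising in the unmodified calculation (so $M^{(N)} = A$), and let $\widetilde M^{(k)}$ denote the analogous sequence in the modified calculation. At any modified $h = n$ step, Proposition \ref{LThn}(3) is applied with reduced mass $a_{i,j} - s$ in place of $a_{i,j}$: the resulting leading term differs from the unmodified one by transporting $s$ less mass from $(n,k)$ to $(n+1,k)$, together with the $\theta$-symmetric counterpart. A direct inspection of \eqref{prec}---checking the upper-right partial sum $\sum_{i \leq n,\, j \geq v} (\,\cdot\,)_{i,j}$ for $k < v \leq 2n+1-k$---shows this substitution strictly decreases the sum by $s$, so $\widetilde M^{(k)} \prec M^{(k)}$ at every modified step.

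Third, I would show the strict inequality $\widetilde M^{(k)} \prec M^{(k)}$ is preserved by subsequent unmodified multiplications. For any basis element $[B]$ with $B \prec M^{(k)}$, multiplying on the left by $[D_{i,h,j} + a_{i,j} E^\theta_{h+1,h}]$ yields, by Proposition \ref{LThn}, a leading matrix obtained from $B$ by the same transport that sends $M^{(k)}$ to $M^{(k+1)}$, plus lower terms. Since this transport---an addition of a fixed $\mathbb Z$-linear combination of $E^\theta_{h,*}$ and $E^\theta_{h+1,*}$---is monotone with respect to $\preccurlyeq$ on matrices of matching row/column sums, the resulting leading matrix stays strictly $\prec M^{(k+1)}$. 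Inducting through all remaining factors (and through multiple modifications, each contributing its own strict drop) yields $\widetilde M^{(N)} \prec A$, while all accumulated lower terms remain $\prec A$ by the same propagation argument. This gives the conclusion.

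The main obstacle will be the monotonicity in the third step: although intuitively clear, one must verify that the source column $k$ (and more generally the selection of $\nu^{(0)}$ or $\nu^{(0')}$ in Proposition \ref{LThn}) agrees for $M^{(k)}$ and for $\widetilde M^{(k)}$, so that the ``same transport'' is indeed applied in both calculations. This should follow from the fact that $\widetilde M^{(k)}$ differs from $M^{(k)}$ only in rows $n$ and $n+1$ at localized columns, which does not affect the zero-pattern conditions used to single out the next transport column in the ordered product.
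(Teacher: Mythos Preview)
Your overall strategy---evaluate the modified product in the same order as in Theorem \ref{TR}, track the leading matrix at each stage, and compare it with the unmodified one---is exactly the paper's approach. You also correctly isolate the step where the argument becomes delicate: after a modified $h=n$ step, does the \emph{next} leading-term formula still behave well?

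However, your proposed resolution of that obstacle is not quite right. You claim that the residual difference between $\widetilde M^{(k)}$ and $M^{(k)}$, being confined to rows $n$ and $n+1$, ``does not affect the zero-pattern conditions used to single out the next transport column.'' But the very next factor after $(i,n,j)$ in the ordered product is $(i,n+1,j)$, which, under the symmetry $E^\theta_{n+2,n+1}=E^\theta_{n-1,n}$, is governed by Proposition \ref{LThn}(1) with $h=n-1$: its hypotheses look precisely at rows $n-1$ and $n$. The leftover mass $s$ at position $(n,j)$ (and symmetrically at $(n+1,2n+1-j)$) \emph{does} change row $n$, so the special-case hypothesis $a_{h+1,k}\geq m$ of \eqref{LT1b} fails, and the transport is genuinely \emph{not} the same: the extremal $\nu^{(0)}$ now spreads across two columns rather than being concentrated at $k$.

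The paper's fix is to invoke the \emph{general} leading-term formulas \eqref{LT1a} and \eqref{LT2a} (this is why they were stated in that generality; see the sentence just before the corollary). These formulas still apply to $\widetilde M^{(k)}$, give a well-defined leading term via the extremal $\nu^{(0)}$ or $\nu^{(0')}$, and that leading term can be checked to be strictly $\prec$ the corresponding $M^{(k+1)}$ in the unmodified calculation. So rather than arguing that the ``same transport'' occurs, you should argue that the (possibly different) transport prescribed by \eqref{LT1a}/\eqref{LT2a} still lands strictly below the original leading term.
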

\begin{proof} The multiplication by $[D_{i,n,j}'+(a_{i,j}-s)E^\theta_{n+1,n}]$, if nonzero, moves $a_{i,j}-s$ from $(n,j)$ position to $(n+1,j)$ position. Thus, the resulting leading term is (strictly) $\prec$ the corresponding leading term when multiplied by $[D_{i,n,j}+a_{i,j}E^\theta_{n+1,n}]$. Now applying \eqref{LT1a} and \eqref{LT2a} to the multiplication by the next factor(s) produces leading terms that are $\prec$ the corresponding leading term in the original product. 
\end{proof}
 
\section{Long multiplication formulas in $\SenrQ$}
Recall the elements in \eqref{FirstAjr} that define the actions of the generators of $\bfUin$ on the tensor space
$\Omega_{2n}^{\otimes r}$. We now introduce the general version of such elements and extend the short multiplication formulas given in \S4 to these ``long'' elements defined below.  

Recall from \eqref{hla} the map $\la\mapsto\hla$ from $\La(n,r)$ to $\La(2n,2r)$ and, for $\bfj,\bfj'\in\Z^N$, let
$$\bfj\centerdot\bfj'=j_1j_1'+j_2j_2'+\cdots+j_Nj_N'.$$
Associated with $A\in\Xienl$ and $\bfj\in\Z^{2n}$, we define the following elements in $\SenrZ$  (cf. \cite[(4.1.1)]{DW}): 
\begin{equation}\label{Ajr}
A({\bf j},r)=\begin{cases}\displaystyle
\sum_{\la\in\Lambda(n,r-\frac{|A|}2)}\up^{\hla\centerdot\bfj}[A+{\hla}],&\text{ if }|A|\leq 2r,\\
0,\qquad&\text{ if }|A|>2r,\end{cases}
\end{equation}

 For any $\bfj=(j_1,\ldots,j_n,j_{n+1},\ldots,j_{2n})\in\Z^{2n}$, let
 \begin{equation}\label{j*}
 \bfj^*=(j_1+j_{2n},\ldots,j_n+j_{n+1})\in\Z^n,\quad\bfj^\circ=(j_1,\ldots,j_n,0,j_{n+1},\ldots,j_{2n})\in\Z^{2n+1},
 \end{equation}
 Then $\hla\centerdot\bfj=\la\centerdot\bfj^*$. Thus, $A(\bfj,r)=A(\bfj^*,r)$ where $\bfj^*$ is regarded as an element of $\Z^{2n}$ by adding $n$ zeros at the end. In particular, if $O$ denotes the zero matrix in $\Xi_{2n}$, then $O(\bfe_i,r)=O(\bfe_{2n+1-i},r)$ for all $i\in[1,n]$.
 
 Recall the idempotent $e=\sum_{\la\in\La(n,r)}[\diag(\hla^\dagger)]\in\SnrZ$ and the algebra isomorphism
 $\fkf:\SenrZ\to e\SnrZ e=\SnriZ$ in \eqref{fkf}.
\begin{lemm}\label{fkf2}
For $A\in\Xienrl$, $\bfj\in\Z^{2n}$, let $A^\dagger$ be defined as in \eqref{Adag} and $A^\dagger(\bfj^\circ,r)\in\SnrZ$ defined in \cite[(4.1.1)]{DW}. Then $A(\bfj,r)=\fkf^{-1}(eA^\dagger(\bfj^\circ,r)e)$.
\end{lemm}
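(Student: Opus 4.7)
The plan is to verify the equivalent identity $\fkf(A(\bfj,r)) = e\, A^\dagger(\bfj^\circ,r)\, e$ in $\SnriZ\subset\SnrZ$ by expanding both sides on the standard basis and matching coefficients term-by-term. Applying $\fkf$ to the left-hand side and using $\fkf([B])=[B^\dagger]$ yields
$$\fkf(A(\bfj,r)) = \sum_{\la \in \Lambda(n,r-|A|/2)} \up^{\hla\cdot\bfj}\,[(A+\hla)^\dagger].$$
Since $A^\dagger$ already carries the $1$ at position $(n+1,n+1)$, one has $(A+\hla)^\dagger = A^\dagger + \widetilde\la$, where $\widetilde\la$ is the diagonal matrix with entries $(\la_1,\ldots,\la_n,0,\la_n,\ldots,\la_1)$, so the diagonal of each surviving term is completely determined by $\la$.

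On the right-hand side, I expand $A^\dagger(\bfj^\circ,r)$ via the type-$B$ formula of \cite[(4.1.1)]{DW} as a sum $\sum_\mu \up^{\widetilde\mu\cdot\bfj^\circ}[A^\dagger+\widetilde\mu]$ over diagonal completions $\widetilde\mu$ indexed by $\Lambda(n+1,r-|A|/2)$. By \eqref{weight idemp}, the idempotent $e=\sum_{\nu\in\Lambda(n,r)}[\diag(\hnu^\dagger)]$ acts on the standard basis as a two-sided projection: $e\,[B]\,e=[B]$ exactly when both $\ro(B)$ and $\co(B)$ lie in $\hLa^\dagger(n,r)$, i.e., have $(n+1)$-st component equal to $1$, and $e\,[B]\,e=0$ otherwise. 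Because $A^\dagger$ already contributes the required $1$ in position $n+1$ of both row and column sums, this projection kills exactly those summands whose $\widetilde\mu$ has a nonzero $(n+1)$-st entry, cutting the indexing set $\Lambda(n+1,r-|A|/2)$ down to $\Lambda(n,r-|A|/2)$ and forcing $\widetilde\mu$ into the form described above.

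The last step is to match the exponents, and this is where the design of $\bfj^\circ$ becomes transparent: inserting a $0$ at the middle slot guarantees that, for the surviving completions $\widetilde\mu$, the quantity $\widetilde\mu\cdot\bfj^\circ$ is insensitive to the middle entry (now forced to vanish) and equals exactly $\hla\cdot\bfj$. Term-by-term comparison then gives the desired equality. The step I expect to require the most care is the projection interpretation of $e$ on the basis $\{[B]\}$ and the clean identification of the reduced index set; once that is in place, the matching of the remaining coefficients is a routine bookkeeping argument directly from the definitions.
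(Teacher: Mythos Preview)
Your proposal is correct and follows essentially the same route as the paper's proof: expand $A^\dagger(\bfj^\circ,r)$ as a sum over $\mu\in\Lambda(n+1,r-|A|/2)$, use the idempotent $e$ as a two-sided projection on the basis $\{[B]\}$ to kill precisely the terms with $\mu_{n+1}>0$, and then match the surviving exponents via $\wmu\centerdot\bfj^\circ=\hla\centerdot\bfj$ for $\la=(\mu_1,\dots,\mu_n)$. One bookkeeping point to be careful about is the convention from \cite[(4.1.1)]{DW}: there the input matrix is taken in $\Xi_{2n+1}^{0\diag}$ and the diagonal completion $\wmu=(\mu_1,\dots,\mu_n,2\mu_{n+1}+1,\mu_n,\dots,\mu_1)$ already carries the central ``$+1$'', so when you write $[A^\dagger+\widetilde\mu]$ you should use the zero-diagonal part of $A^\dagger$ (equivalently, absorb the central $1$ of $A^\dagger$ into $\wmu$); otherwise the $(n+1,n+1)$ entry is double-counted and no term survives the projection by $e$.
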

\begin{proof}By definition, we may assume $|A|\leq 2r$ and so $A^\dagger(\bfj^\circ,r)=
\sum_{\mu\in\Lambda(n+1,r-\frac{|A|}2)}\up^{\wmu\centerdot\bfj^\circ}[A^\dagger+{\wmu}]$, where $\wmu=(\mu_1,\dots,\mu_n,2\mu_{n+1}+1,\mu_n,\ldots,\mu_1)$, then 
$$e[A^\dagger+{\wmu}]e=\begin{cases}0,&\text{ if }\mu_{n+1}>0;\\
[A^\dagger+\hla^\dagger],&\text{ if }\mu_{n+1}=0,\end{cases}$$
 where $\la=(\mu_1,\ldots,\mu_n)$. The assertion now follows from the fact that $\wmu\centerdot\bfj^\circ=\hla\centerdot\bfj$.
\end{proof}

For $1\leq h\leq n$, put 
\begin{equation}\label{al-}
\al_h=\bfe_{h}-\bfe_{h+1},\;\;\al^-_h=-\bfe_h-\bfe_{h+1}\in\mathbb Z^{2n}.
\end{equation}
\begin{theo}\label{longMF}Maintain the notations introduced above. 
 For $A=(a_{i,j})\in\Xienl$, $h\in[1,n)$, and ${\bf j}=(j_1,j_2,\dots,j_{2n})\in \mathbb{Z}^{2n}$, let $\b_p(A,h),\b'_p(A,h)$ be defined as in \eqref{beta}. Then
  the following multiplication formulas hold in $\SenrQ$ for all $r\geq \frac{|A|}2$:
\begin{itemize}
\item[] \vspace{-3ex}
$$(1)\;\;{ O}({\bf j},r)A({\bf j'},r)=\up^{\ro(A)\centerdot{\bf j}}A({\bf {j+j'}},r),\quad A({\bf j'},r){O}({\bf j},r)=\up^{\co(A)\centerdot\bfj}A({\bf {j+j'}},r);\qquad\qquad\quad\;\;\quad$$
\item[]\vspace{-3ex}
\begin{equation*}
\aligned
(2)\;\;E^{\theta}_{h,h+1}({\bf 0},r)&\cdot A({\bf j},r)=\sum_{\substack{1\leq p<h\\ a_{h+1,p}\geq 1}}\up^{\b_p (A,h)}\overline{[\![a_{h,p}+1]\!]}
(A+E^{\theta}_{h,p}-E^{\theta}_{h+1,p})({\bf j}+\alpha_{h},r)\\
&+\varepsilon \frac{\up^{\b_h(A,h)-j_h-j_{2n+1-h}}}{\up-\up^{-1}}\Big((A-E^{\theta}_{h+1,h})({\bf j}+\al _h,r)-(A-E^{\theta}_{h+1,h})({\bf j}+\al^-_h,r)\Big)\\
&+ \up^{\b_{h+1}(A,h)+j_{h+1}+j_{2n-h}}\overline{[\![a_{h,h+1}+1]\!]}(A+E^{\theta}_{h,h+1})({\bf j},r)\\
&+\sum_{\substack{h+1<p\leq 2n\\ a_{h+1,p}\geq 1}}\up^{\b_p(A,h)}\overline{[\![a_{h,p}+1]\!]}
(A+E^{\theta}_{h,p}-E^{\theta}_{h+1,p})({\bf j},r)\\
\endaligned
\end{equation*}
where $\varepsilon=\delta^\leq_{1,a_{h+1,h}}$ is given in  \eqref{c_A}.
\item[]\vspace{-3ex}
\begin{equation*}
\aligned
(3)\, E^{\theta}_{h+1,h}(&{\bf 0},r)\cdot A({\bf j},r)=
\sum_{\substack{1\leq p<h\\ a_{h,p}\geq 1}}\up^{\b'_p(A,h)}\overline{[\![a_{h+1,p}+1]\!]}
(A-E^{\theta}_{h,p}+E^{\theta}_{h+1,p})({\bf j},r)\\
&+ \up^{\b'_h(A,h)+j_{h}+j_{2n+1-h}}\overline{[\![a_{h+1,h}+1]\!]}(A+E^{\theta}_{h+1,h})({\bf j},r)\\
&+\varepsilon'\frac{\up^{\b'_{h+1}(A,h)-j_{h+1}-j_{2n-h}}}{\up-\up^{-1}}\big((A-E^{\theta}_{h,h+1})({\bf j}-\al _h,r)-(A-E^{\theta}_{h,h+1})({\bf j}+\al^-_h,r)\big)\\
&+\sum_{\substack{h+1<p\leq 2n\\ a_{h,p}\geq 1}}\up^{\b'_p(A,h)}\overline{[\![a_{h+1,p}+1]\!]}
(A-E^{\theta}_{h,p}+E^{\theta}_{h+1,p})({\bf j}-\al_h,r),
\endaligned
\end{equation*}
where $\varepsilon'=\delta_{1,a_{h,h+1}}^\leq$.
\item[]\vspace{-3ex}
 \begin{equation*}\aligned
(4)\;E^{\theta}_{n+1,n}({\bf 0},&r)\cdot A({\bf j},r)=\sum_{i\neq n,n+1\atop a_{n,i}\geq1}\up^{\b'_i(A,n)-\delta^\leq_{n+1,i}}\overline{[\![a_{n+1,i}+1]\!]}(A-E^{\theta}_{n,i}+E^{\theta}_{n+1,i})({\bf j},r)\\
&\quad+\up^{\b'_n(A,n)+j_n+j_{n+1}}\overline{[\![a_{n+1,n}+1]\!]}(A+E^{\theta}_{n+1,n})({\bf j},r)\\
&\quad+\delta^\leq_{1,a_{n,n+1}}\frac{\up^{\b'_{n+1}(A,n)-j_{n+1}-j_n}}{\up-\up^{-1}}\Big((A-E^{\theta}_{n,n+1})({\bf j},r)-(A-E^{\theta}_{n,n+1})({\bf j}-\bfe^\theta_n,r)\Big)\\
&\quad+ c_AA({\bf j}-\bfe_n,r)\\
\endaligned
\end{equation*}
 where $c_A$ is defined in \eqref{c_A}.
\end{itemize}
\end{theo}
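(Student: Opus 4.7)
The plan is to expand $A(\bfj,r)$ via \eqref{Ajr} as $\sum_\la\up^{\hla\cdot\bfj}[A+\hla]$, apply the relevant short multiplication formula from Lemma \ref{keyMF} to each product $[E^\theta_{\ast,\ast}+\hmu_\la]\cdot[A+\hla]$ (where $\hmu_\la$ is uniquely forced by the Kronecker delta in the short formula), and then reassemble the resulting sums as $\up$-scaled $B(\bfj',r)$'s for suitable $B$ and $\bfj'$. Throughout, we crucially use $a_{i,i}=0$ for $A\in\Xienl$. For (1), this is immediate: $O(\bfj,r)=\sum_\la\up^{\hla\cdot\bfj}[\hla]$ is a linear combination of the pairwise orthogonal idempotents $[\hla]$ and, by \eqref{weight idemp}, $[\hla]\cdot[A+\hmu]=\delta_{\hla,\,\ro(A)+\hmu}[A+\hmu]$; summing over $\mu$ gives $\up^{\ro(A)\cdot\bfj}A(\bfj+\bfj',r)$, with the right multiplication symmetric via $\co(A+\hmu)=\co(A)+\hmu$.

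For (2) (and analogously (3)), apply Lemma \ref{keyMF}(1) to each $[A+\hla]$ and partition the resulting sum over $p\in[1,2n]$ by how $E^\theta_{h,p}-E^\theta_{h+1,p}$ meets the diagonal of $\hla$. For $p\notin\{h,h+1\}$, the entries $a^{(\la)}_{h,p}$ are $\la$-independent and $\beta_p(A+\hla,h)-\beta_p(A,h)$ equals $\la_h-\la_{h+1}$ when $p\le h$ and $0$ when $p\ge h+1$; summing over $\la$ yields $(A+E^\theta_{h,p}-E^\theta_{h+1,p})(\bfj+\al_h,r)$ or $(A+E^\theta_{h,p}-E^\theta_{h+1,p})(\bfj,r)$ respectively. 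For $p=h+1$, the diagonal piece $E^\theta_{h+1,h+1}$ is absorbed via the reindexing $\la'=\la-\bfe_{h+1}$, which multiplies by $\up^{j_{h+1}+j_{2n-h}}$ and produces the third term in (2). For $p=h$, reindexing $\la'=\la+\bfe_h$ absorbs $E^\theta_{h,h}$ and introduces a factor $\up^{-j_h-j_{2n+1-h}}$; using $a_{h,h}=0$ the surviving $q$-integer simplifies to $\overline{[\![\la'_h]\!]}\up^{\la'_h-1}=[\la'_h]$, and the identity $(\up-\up^{-1})[\la'_h]=\up^{\la'_h}-\up^{-\la'_h}$ rewrites the sum as precisely the bracketed difference $(A-E^\theta_{h+1,h})(\bfj+\al_h,r)-(A-E^\theta_{h+1,h})(\bfj+\al^-_h,r)$ divided by $\up-\up^{-1}$.

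For (4), proceed analogously using Lemma \ref{keyMF}(3). The extra $c_A$-term in the short formula contributes $\sum_\la c_{A+\hla}\up^{\hla\cdot\bfj}[A+\hla]$; a direct check using \eqref{c_A} and the zero-diagonal of $A$ gives $c_{A+\hla}=\up^{-\la_n}c_A$, so this sums to $c_A\,A(\bfj-\bfe_n,r)$. The remaining $p$-sum is treated as in (2) with $h=n$, except that the boundary case is $p=n+1$ where $E^\theta_{n+1,n+1}$ is diagonal and, because positions $n$ and $n+1$ of $\hla$ both equal $\la_n$, the identity $\hla+E^\theta_{n+1,n+1}=\widehat{\la+\bfe_n}$ shifts $\hla\cdot\bfj$ by $-(j_n+j_{n+1})=-\bfj\cdot\bfe^\theta_n$; this is why the fractional term in (4) is a difference between $\bfj$ and $\bfj-\bfe^\theta_n$ rather than the $\al_h$/$\al^-_h$ shifts appearing in (2) and (3).

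The main technical obstacle is the boundary case analysis ($p=h$ in (2), $p=h+1$ in (3), $p=n+1$ in (4)). One must (i) perform the $\la\mapsto \la\pm\bfe_{h\,{\rm or}\,h+1\,{\rm or}\,n}$ reindexing, (ii) simultaneously track the $\up$-power shifts arising both from $\hla\cdot\bfj\mapsto\widehat{\la'}\cdot\bfj\pm(j_{\ast}+j_{\ast'})$ and from $\beta_p(A+\hla,h)$, and (iii) use $a_{h,h}=0$ to collapse $\overline{[\![\la'_h]\!]}\up^{\la'_h-1}$ into the $q$-integer $[\la'_h]$ so that the result can be expressed as a difference of two $B(\bfj',r)$ terms. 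An alternative route for (2) and (3) would be to invoke Lemma \ref{fkf2} and pull back the corresponding type $B$ long multiplication formulas from \cite{DW}, but this does not apply to (4), which genuinely needs the new short formula Lemma \ref{keyMF}(3).
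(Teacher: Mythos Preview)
Your proposal is correct and matches the paper's approach essentially line by line: for (1)--(3) the paper says to expand $A(\bfj,r)$ via \eqref{Ajr} and apply Lemma~\ref{keyMF}(1)\&(2) term by term (exactly as in \cite[Thm.~4.2]{DW}), and for (4) it carries out precisely your computation using Lemma~\ref{keyMF}(3), including the observation $c_{A+\hmu}=\up^{-\mu_n}c_A$ and the boundary reindexing at $p=n,n+1$. Your closing remark that (2)--(3) could alternatively be pulled back from \cite{DW} via Lemma~\ref{fkf2} is also exactly the alternative route the paper mentions.
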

\begin{proof} For the $h<n$ case, by using Lemma \ref{keyMF}(1)\&(2), the formulas (1)--(3) can be proved similarly to that of \cite[Thm. 4.2]{DW}. Alternatively, they can also be obtained by applying Lemma \ref{fkf2} to the formulas (1)--(3) in \cite[Thm. 4.2]{DW} using $O^\dagger({\bf j}^\circ,r)$,
$E^{\theta\dagger}_{h,h+1}({\bf 0}^\circ,r), E^{\theta\dagger}_{h+1,h}({\bf 0}^\circ,r)$, and $A^\dagger({\bf j}^\circ,r)$.

We now use Lemma \ref{keyMF}(3) to prove (4), the $h=n$ case. 
 By the definition of $A(\bfj,r)$, 
\begin{equation}\label{expand}
\aligned
E^{\theta}_{n,n+1}({\bf 0},r)\cdot A({\bf j},r)&=\Big(\sum_{\nu\in \La(n,r-1)}[E^{\theta}_{n,n+1}+\hnu]\Big)\Big(\sum_{\mu \in \La(n,r-\frac{|A|}2)}\up^{\hmu\centerdot{\bf j}}[A+\hmu]\Big)\\
&=\sum_{\mu \in \La(n,r-\frac{|A|}2)}\up^{\hmu\centerdot{\bf j}}\bigg(\sum_{\nu\in \La(n,r-1)}[E^{\theta}_{n,n+1}+\hnu]\bigg)[A+\hmu]\\
&=\sum_{\mu\in\La(n,r-\frac{|A|}2)}\up^{\hmu.{\bf j}} [E^{\theta}_{n,n+1}-E^{\theta}_{n,n}+\ro(A)+\hmu]\cdot[A+\hmu].
\endaligned
\end{equation}
Here, by \eqref{weight idemp}, the only nonzero terms satisfy $\co(E^{\theta}_{n,n+1})+\hnu=\ro(A)+\hmu$.
Write $A+\hmu=(a^{\hmu}_{i,j})\in\Xienr$. By Lemma \ref{keyMF}(3),
\begin{eqnarray*}
&& [E^{\theta}_{n,n+1}-E^{\theta}_{n,n}+\ro(A)+\hmu]\cdot[A+\hmu]\\
&=&c_{A+\hmu}[A+\hmu]+\sum_{i\in[1,2n],a_{n,i}^\hmu\geq1}\up^{\b'_i(A+\hmu,n)-\delta_{n+1,i}^\leq}\overline{[\![a^{\hmu}_{n+1,i}+1]\!]}[A+\hmu-E^{\theta}_{n,i}+E^{\theta}_{n+1,i}].
\end{eqnarray*}
Since $a^{\hmu}_{i,i}=\hmu_{i}$, $a^{\hmu}_{i,j}=a_{i,j}$ for $i\neq j$, and $\hmu_{n+1}=\hmu_n$, it follows that $\b'_p(A+\hmu,n)=\b'_p(A,n)$ for all $p\in[1,2n]$, and $c_{A+\hmu}=\up^{-\hmu_n}c_A$.
Hence, substituting into \eqref{expand} gives the last term in (4):
$$\sum_{\mu\in\La(n,r-\frac{|A|}2)}\up^{\hmu.{\bf j}}\up^{-\hmu_n}c_A[A+\hmu]=
c_A\sum_{\mu\in\La(n,r-\frac{|A|}2)}\up^{\hmu\centerdot{(\bfj-\bfe_n)}}[A+\hmu]=c_AA(\bfj-\bfe_n,r).$$
On the other hand, substituting the summation into \eqref{expand} yields
\begin{equation}\label{zzz}
\aligned
&\quad\sum_{\mu\in\La(n,r-\frac{|A|}2)}\up^{\hmu\centerdot{\bf j}}\sum_{i\in[1,2n]\atop a^{\hmu}_{n,i}\geq1}\up^{\b'_i(A+\hmu,n)-\delta_{n+1,i}^\leq}\overline{[\![a^{\hmu}_{n+1,i}+1]\!]}[A+\hmu-E^{\theta}_{n,i}+E^{\theta}_{n+1,i}]\\
&=\sum_{i\neq n,n+1\atop a_{n,i}\geq1}\up^{\b'_i(A,n)-\delta_{n+1,i}^\leq}\overline{[\![a_{n+1,i}+1]\!]}\sum_{\mu\in\La(n,r-\frac{|A|}2)}\up^{\hmu.{\bf j}}[A+\hmu-E^{\theta}_{n,i}+E^{\theta}_{n+1,i}]\\
&+\sum_{\mu\in\La(n,r-\frac{|A|}2),\hmu_n\geq1}\up^{\b'_n(A,n)+\hmu.{\bf j}}\overline{[\![a_{n+1,n}+1]\!]}[A+\hmu-E^{\theta}_{n,n}+E^{\theta}_{n+1,n}]\\
&+\delta^\leq_{1,a_{n,n+1}}\sum_{\mu\in\La(n,r-\frac{|A|}2)}\up^{\b'_{n+1}(A,n)-1+\hmu.{\bf j}}\overline{[\![\hmu_{n+1}+1]\!]}[A+\hmu-E^{\theta}_{n,n+1}+E^{\theta}_{n+1,n+1}].
\endaligned
\end{equation}
Since $|A|=|A-E^{\theta}_{n,i}+E^{\theta}_{n+1,i}|$, the inner summation of the double sum term in \eqref{zzz} gives, for $i\neq n,n+1$, $(A-E^{\theta}_{n,i}+E^{\theta}_{n+1,i})(\bfj,r)$. So, the double sum gives the summation term in (4).
For the second summation term of \eqref{zzz}, since $E^\theta_{n,n}=\diag(\bfe_n^\theta)$, we have
\begin{eqnarray*}
&&\sum_{\mu\in\La(n,r-\frac{|A|}2),\hmu_n\geq1}\up^{\b'_n(A,n)+\hmu.{\bf j}}\overline{[\![a_{n+1,n}+1]\!]}[A+E^{\theta}_{n+1,n}+\hmu-\bfe_n^\theta]\\
&=&\up^{\b'_n(A,n)+j_n+j_{n+1}}\overline{[\![a_{n+1,n}+1]\!]}\sum_{\mu\in\La(n,r-\frac{|A|}2),\hmu_n\geq1}\up^{(\hmu-\bfe_n^\theta).{\bf j}}[A+E^{\theta}_{n+1,n}+\hmu-\bfe_n^\theta]\\
&=&\up^{\b'_n(A,n)+j_n+j_{n+1}}\overline{[\![a_{n+1,n}+1]\!]}(A+E^{\theta}_{n+1,n})({\bf j},r),
\end{eqnarray*}
giving the second term in (4).

Finally, if $a_{n,n+1}\geq1$, then, by noting $E^\theta_{n+1,n+1}=E^\theta_{n,n}=\diag(\bfe_n^\theta)$ and $\hmu_n=\hmu_{n+1}$, the last summation in \eqref{zzz} has the form
\begin{eqnarray*}
&&\sum_{\mu\in\La(n,r-\frac{|A|}2)}\up^{\b'_{n+1}(A,n)-1+\hmu.{\bf j}}\overline{[\![\hmu_{n+1}+1]\!]}[A+\hmu-E^{\theta}_{n,n+1}+\bfe_n^\theta]\\
&=&\frac{\up^{\b'_{n+1}(A,n)-1}}{1-\up^{-2}}\sum_{\mu\in\La(n,r-\frac{|A|}2)}\up^{\hmu.{\bf j}}(1-\up^{-2(\hmu_{n+1}+1)})[A-E^{\theta}_{n,n+1}+\hmu+\bfe_n^\theta]\\
&=&\frac{\up^{\b'_{n+1}(A,n)-j_{n+1}-j_n}}{\up-\up^{-1}}\sum_{\mu\in\La(n,r-\frac{|A|}2)}\up^{(\hmu+\bfe_n^\theta)\centerdot{\bf j}}(1-\up^{-2(\hmu_{n+1}+1)})[A-E^{\theta}_{n,n+1}+\hmu+\bfe_n^\theta]\\
&=&\frac{\up^{\b'_{n+1}(A,n)-j_{n+1}-j_n}}{\up-\up^{-1}}\sum_{\mu\in\La(n,r-\frac{|A|}2)}\big(\up^{(\hmu+\bfe_n^\theta).{\bf j}}-\up^{(\hmu+\bfe_n^\theta).({\bf j}-\bfe_n^\theta)}\big)[A-E^{\theta}_{n,n+1}+\hmu+\bfe_n^\theta]\\
&=&\frac{\up^{\b'_{n+1}(A,n)-j_{n+1}-j_n}}{\up-\up^{-1}}\sum_{\la\in\La(n,r-\frac{|A|}2+1)}\big(\up^{\hla.{\bf j}}-\up^{\hla.({\bf j}-\bfe_n^\theta)}\big)[A-E^{\theta}_{n,n+1}+\hla],\\
\end{eqnarray*}
giving the third term in (4). This completes the proof of the theorem.
\end{proof}

We may now compute certain divided powers  in $\SenrQ$.
\begin{coro}\label{DP} Let $m$ be a positive integer. 
\begin{itemize}
\item[(1)] If $h\in[1,n)$, then we have, for all $r\geq m$, 
$$\frac{E^\theta_{h,h+1}(\bfl,r)^m}{[m]^!}=(mE^\theta_{h,h+1})(\bfl,r),\quad \frac{E^\theta_{h+1,h}(\bfl,r)^m}{[m]^!}=(mE^\theta_{h+1,h})(\bfl,r).$$
\item[(2)]
If $h=n$, then
 there exist $f_{s,t}\in\Q(\up)$, $\bfj_{s,t}\in\Z^{2n}$ for $s\in[0,m),1\leq t\leq n_s$, independent of $r\geq m$, such that
$$\frac{E^{\theta}_{n,n+1}({\bf 0},r)^m}{[m]^!}=(mE^{\theta}_{n,n+1})({\bf 0},r)+\sum_{s=0}^{m-1}\sum_{t=1}^{n_s}f_{s,t}(sE^{\theta}_{n,n+1})(\bfj_{s,t},r).$$ 
\item[(3)] For $A=(a_{i,j})\in\Xienl$ and ${\bf j}=(j_1,j_2,\dots,j_{2n})\in \mathbb{Z}^{2n}$, there exist finitely many 
$B_a\in\Xienl$, $\bfj^{(b)}\in\Z^{2n}$, and
$g_{B_a,\bfj^{(b)}}\in\Q(\up)$ such that, for all $r\geq\frac{|A|}2$, 
$$(mE_{n+1,n}^{\theta})(\bfl,r)\cdot A(\bfj,r)=\sum_{a,b}g_{B_a,\bfj^{(b)}}B_a(\bfj^{(b)},r).$$
\end{itemize}
\end{coro}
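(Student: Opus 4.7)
The plan is to induct on $m$ for all three parts, using the multiplication formulas of Theorem \ref{longMF} in a telescoping fashion.

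For Part (1), I would apply Theorem \ref{longMF}(2) to $E^\theta_{h,h+1}(\bfl,r)\cdot(kE^\theta_{h,h+1})(\bfl,r)$, taking $A = kE^\theta_{h,h+1}$ and $\bfj = \bfl$. Since $h \in [1,n)$, the indices $h, h+1, 2n-h, 2n+1-h$ are pairwise distinct; the matrix $A$ has $a_{h,h+1} = a_{2n+1-h,2n-h} = k$ and zeros elsewhere, so row $h+1$ is identically zero. This makes the first, second, and fourth summations vanish outright and forces $\varepsilon = \delta^\leq_{1,a_{h+1,h}} = 0$. Only the third summand survives, with $\beta_{h+1}(A,h) = k$ and $\overline{[\![a_{h,h+1}+1]\!]} = \overline{[\![k+1]\!]}$. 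Using $\up^{k}\overline{[\![k+1]\!]} = [k+1]$, we obtain $E^\theta_{h,h+1}(\bfl,r)\cdot(kE^\theta_{h,h+1})(\bfl,r) = [k+1]\,((k+1)E^\theta_{h,h+1})(\bfl,r)$, and the induction closes upon dividing by $[k+1]$. The claim for $E^\theta_{h+1,h}$ is symmetric, using Theorem \ref{longMF}(3).

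For Part (2), I would run the analogous induction using Theorem \ref{longMF}(4), in which extra terms now arise. Setting $A = kE^\theta_{n+1,n}$ (so $a_{n,n+1} = a_{n+1,n} = k$ and the rest are zero) and $\bfj = \bfl$, the second term again produces the leading contribution $[k+1]\,((k+1)E^\theta_{n+1,n})(\bfl,r)$ by the same calculation as in Part (1). The third summand, with $\delta^\leq_{1,a_{n,n+1}} = 1$ and $\beta'_{n+1}(A,n) = k$, yields correction terms of degree $k-1$ of the shape $(\up-\up^{-1})^{-1}\up^{k}\bigl(((k-1)E^\theta_{n+1,n})(\bfl,r) - ((k-1)E^\theta_{n+1,n})(\bfl-\bfe_n^\theta,r)\bigr)$; the fourth summand contributes $c_A\,(kE^\theta_{n+1,n})(-\bfe_n,r)$ with $c_A = \up^k-\up^{-k}$, a degree-$k$ correction whose index vector $-\bfe_n$ differs from $\bfl$. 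All coefficients and $\bfj$-values are independent of $r$. Combining with the inductive hypothesis at level $m-1$ and multiplying once more on the left by $E^\theta_{n+1,n}(\bfl,r)$ via Theorem \ref{longMF}(4), each pre-existing degree-$s$ correction term with $s \leq m-2$ generates new terms of degree at most $s+1 \leq m-1$, so the full expansion of $E^\theta_{n,n+1}(\bfl,r)^m/[m]^!$ has the claimed shape.

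For Part (3), I would again induct on $m$. The case $m = 1$ is Theorem \ref{longMF}(4) verbatim. For $m \geq 2$, Part (2) rearranges to
\[
(mE^\theta_{n+1,n})(\bfl,r)\cdot A(\bfj,r) = \frac{E^\theta_{n+1,n}(\bfl,r)^m\cdot A(\bfj,r)}{[m]^!} - \sum_{s=0}^{m-1}\sum_{t} f_{s,t}\,(sE^\theta_{n+1,n})(\bfj_{s,t},r)\cdot A(\bfj,r).
\]
Each correction piece $(sE^\theta_{n+1,n})(\bfj_{s,t},r)\cdot A(\bfj,r)$ with $s < m$ is in the desired form by the inductive hypothesis. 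For the main piece $E^\theta_{n+1,n}(\bfl,r)^m\cdot A(\bfj,r)$, I would apply Theorem \ref{longMF}(4) iteratively: the formula transforms any single $B(\bfj',r)$ into a finite $\Q(\up)$-linear combination of elements of the same form with indices independent of $r$, so this property is preserved under repeated left multiplication by $E^\theta_{n+1,n}(\bfl,r)$.

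The main obstacle will be the bookkeeping in Part (2), where one must verify that no iteration produces a correction term of the form $(mE^\theta_{n+1,n})(\bfl,r)$ itself—i.e., that the excess terms always have either strictly smaller degree or an index vector different from $\bfl$. This is what ensures the sum in Part (2) is genuinely indexed by $s \in [0,m)$, rather than $s \in [0,m]$.
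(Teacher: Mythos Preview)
Your approach is essentially the paper's own. Parts (1) and (2) match exactly: the paper also computes $E^\theta_{n,n+1}(\bfl,r)\cdot(aE^\theta_{n,n+1})(\bfl,r)$ via Theorem~\ref{longMF}(4) and obtains the same three-term recursion (their display~\eqref{EaE}), then inducts on $m$. Your worry about the Part~(2) bookkeeping is unfounded: each left multiplication by $E^\theta_{n,n+1}(\bfl,r)$ raises the degree (the coefficient in front of $E^\theta_{n,n+1}$) by at most $1$, so the correction terms at step $m-1$, which have degree $\leq m-2$, contribute only degrees $\leq m-1$ after one more multiplication. The unique degree-$m$ contribution comes from the leading term alone and is exactly $[m]\,(mE^\theta_{n,n+1})(\bfl,r)$, so there is nothing to cancel.

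For Part~(3) there is one small gap: your inductive hypothesis is stated for $(sE^\theta_{n+1,n})(\bfl,r)\cdot A(\bfj,r)$, but the correction pieces you feed into it have the form $(sE^\theta_{n+1,n})(\bfj_{s,t},r)\cdot A(\bfj,r)$ with $\bfj_{s,t}$ possibly nonzero. This is repaired by Theorem~\ref{longMF}(1): write $(sE^\theta_{n+1,n})(\bfj_{s,t},r)=\up^{-c}\,(sE^\theta_{n+1,n})(\bfl,r)\,O(\bfj_{s,t},r)$ for the appropriate exponent $c$, and then $O(\bfj_{s,t},r)A(\bfj,r)$ is a power of $\up$ times $A(\bfj+\bfj_{s,t},r)$, to which the inductive hypothesis applies. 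The paper organises Part~(3) slightly differently and avoids this issue: it first uses Part~(2) at all levels $\leq m$ to rewrite $(mE^\theta_{n+1,n})(\bfl,r)$ entirely as a $\Q(\up)$-linear combination of products $E^\theta_{n,n+1}(\bfl,r)^{(m-i)}O(\bfj_{i,a},r)$, and then applies Theorem~\ref{longMF}(1),(4) directly, with no separate induction on $m$ in Part~(3).
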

\begin{proof}The first assertion is clear; see the proof of \cite[Cor.~4.5]{DW}. We now prove the $h=n$ case.
For $A=E^{\theta}_{n,n+1}$, we first observe that, for a positive integer $a$,
$$\b'_p(aE^{\theta}_{n,n+1},n)=\sum_{l\leq p}a_{n+1,l}-\sum_{l<p }a_{n,l} =\begin{cases}0,&\text{ if }p<n;\\
a, &\text{ if }p=n,n+1;\\
0,&\text{ if }p>n+1.\\
\end{cases}$$
Thus, by Theorem \ref{longMF}(4) and \eqref{cE}, we have, for any positive integer $a$,
\begin{equation*}\label{EE}
\aligned
E^{\theta}_{n,n+1}({\bf 0},r)&(aE^{\theta}_{n,n+1})({\bfj},r)\\
&=\up^{a+j_n+j_{n+1}}\overline{[\![a+1]\!]}((a+1)E^{\theta}_{n+1,n})({\bfj},r)\\
&\quad+\frac{\up^{a-j_{n+1}-j_n}}{\up-\up^{-1}}\big(((a-1)E^{\theta}_{n,n+1})(\bfj,r)-((a-1)E^{\theta}_{n,n+1})(\bfj-\bfe_n^\theta,r)\big)\\
&\quad+(\up^a-\up^{-a})(aE^{\theta}_{n,n+1})(\bfj-\bfe_n,r).
\endaligned\end{equation*}
Hence, for $\bfj=\bfl$, we have
\begin{equation}\label{EaE}
\aligned
E^{\theta}_{n,n+1}({\bf 0},r)&(aE^{\theta}_{n,n+1})({\bfl},r)\\
&=[a+1]((a+1)E^{\theta}_{n+1,n})({\bfl},r)+(\up^a-\up^{-a})(aE^{\theta}_{n,n+1})(-\bfe_n,r)\\
&\quad+\frac{\up^{a}}{\up-\up^{-1}}\big(((a-1)E^{\theta}_{n,n+1})(\bfl,r)-((a-1)E^{\theta}_{n,n+1})(-\bfe_n^\theta,r)\big).
\endaligned\end{equation}
Now assertion (2) follows easily from an induction on $m$.

Finally, for assertion (3), let $E^{\theta}_{n,n+1}({\bf 0},r)^{(m)}:=\frac{E^{\theta}_{n,n+1}({\bf 0},r)^m}{[m]^!}$. By (2), we write $(mE_{n+1,n}^{\theta})(\bfl,r)$ as a linear combination of $E^{\theta}_{n,n+1}({\bf 0},r)^{(m-i)}O(\bfj_{i,a},r)$ ($i\in[0,m],a\in[1,p_i]$) and so (3) follows from Theorem \ref{longMF}(1),(4).
\end{proof}

\begin{coro}\label{CF}
For $h\in[1,n-1)$ and $\bfj\in\Z^{2n}$, we have
$$\aligned
(1)\,\quad\qquad O(\bfj,r)E_{n,n+1}^\theta(\bfl,r)&=E_{n,n+1}^\theta(\bfl,r)O(\bfj,r),\\
(2)\quad E_{h,h+1}(\bfl,r)E_{n,n+1}^\theta(\bfl,r)&=E_{n,n+1}^\theta(\bfl,r)E_{h,h+1}(\bfl,r),\\
(3)\quad E_{h+1,h}(\bfl,r)E_{n,n+1}^\theta(\bfl,r)&=E_{n,n+1}^\theta(\bfl,r)E_{h+1,h}(\bfl,r).
\endaligned$$
\end{coro}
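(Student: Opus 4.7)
The plan is to derive each identity as a direct consequence of the long multiplication formulas in Theorem \ref{longMF}, using only that for $h\in[1,n-1)$ (i.e.\ $h\le n-2$) the matrices $E^\theta_{h,h+1}$ and $E^\theta_{h+1,h}$ have nonzero entries only in rows $h,h+1,2n-h,2n+1-h$; in particular rows $n,n+1$ are identically zero. A second useful observation is the matrix identity $E^\theta_{n,n+1}=E_{n,n+1}+E_{n+1,n}=E^\theta_{n+1,n}$, so that the formula of Theorem \ref{longMF}(4), stated for a leftmost factor $E^\theta_{n+1,n}(\bfl,r)$, applies verbatim to $E^\theta_{n,n+1}(\bfl,r)$.

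First I would handle (1). From the identity just recalled, $\ro(E^\theta_{n,n+1})=\co(E^\theta_{n,n+1})=\bfe^\theta_n$. Applying Theorem \ref{longMF}(1) with $A=E^\theta_{n,n+1}$ and $\bfj'=\bfl$ then shows both sides equal $\up^{\bfe^\theta_n\centerdot\bfj}E^\theta_{n,n+1}(\bfj,r)$.

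For (2), I compute $E^\theta_{h,h+1}(\bfl,r)\cdot E^\theta_{n,n+1}(\bfl,r)$ from Theorem \ref{longMF}(2) taken at $A=E^\theta_{n,n+1}$ and $\bfj=\bfl$. Since rows $h$ and $h+1$ of $A$ are zero, every term in the first and fourth summations disappears; the middle $\varepsilon$-term also vanishes because $\varepsilon=\delta^{\le}_{1,a_{h+1,h}}=0$, and the surviving third term has $\overline{[\![a_{h,h+1}+1]\!]}=1$ and $\b_{h+1}(A,h)=0$, yielding $(E^\theta_{h,h+1}+E^\theta_{n,n+1})(\bfl,r)$. For the opposite order $E^\theta_{n,n+1}(\bfl,r)\cdot E^\theta_{h,h+1}(\bfl,r)$, the identification above lets me apply Theorem \ref{longMF}(4) with $A=E^\theta_{h,h+1}$. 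Here rows $n$ and $n+1$ of $A$ vanish, so $c_A=0$, the first summation is empty, and the third term vanishes because $a_{n,n+1}=0$; only the second term survives with coefficient $1$, producing once more $(E^\theta_{h,h+1}+E^\theta_{n,n+1})(\bfl,r)$. This proves (2). Relation (3) is strictly parallel: the left–right product is computed by Theorem \ref{longMF}(3) with $A=E^\theta_{n,n+1}$ (only the second summand survives, for the same reasons, giving $(E^\theta_{h+1,h}+E^\theta_{n,n+1})(\bfl,r)$), and the right–left product is again handled by Theorem \ref{longMF}(4), this time with $A=E^\theta_{h+1,h}$, producing the same result.

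I do not foresee any serious obstacle; the content is purely bookkeeping once one verifies the vanishing of $\varepsilon$, $\varepsilon'$, $c_A$, the $\delta^{\le}$-factors and the exponents $\b_p$, $\b'_p$ under the support restriction coming from $h\le n-2$. The one place to be careful is the use of $E^\theta_{n,n+1}=E^\theta_{n+1,n}$: without this observation Theorem \ref{longMF}(4) would seem not to apply to the products in (2) and (3).
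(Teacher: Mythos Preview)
Your proposal is correct and follows essentially the same route as the paper: both apply Theorem \ref{longMF}(1) for part (1), then compute each side of (2) and (3) via Theorem \ref{longMF}(2),(3),(4), using that for $h\le n-2$ the relevant rows vanish so that only the single term $(E^\theta_{h,h+1}+E^\theta_{n,n+1})(\bfl,r)$ (resp.\ $(E^\theta_{h+1,h}+E^\theta_{n,n+1})(\bfl,r)$) survives. Your explicit remark that $E^\theta_{n,n+1}=E^\theta_{n+1,n}$ is exactly what the paper relies on (noted after \eqref{Eij}) to invoke Theorem \ref{longMF}(4) on the right-left products.
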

\begin{proof} By Theorem \ref{longMF}(1), $O(\bfj,r)E_{n,n+1}^\theta(\bfl,r)=\up^{\bfj\centerdot\bfe^\theta_n}E_{n,n+1}(\bfj,r)=E_{n,n+1}^\theta(\bfl,r)O(\bfj,r),$ proving (1).
For (2), since $h<n-1$,  all $\b_{h+1}(E^{\theta}_{n,n+1},h), \b'_n(E^{\theta}_{h,h+1},n)$, and $c_{E^{\theta}_{h,h+1}}$ are 0. Thus, by Theorem \ref{longMF}(2)\&(4),
\begin{eqnarray*}
E^{\theta}_{h, h+1}({\bf 0},r)E^{\theta}_{n,n+1}({\bf 0},r)
&=&\up^{\b_{h+1}(E^{\theta}_{n,n+1},h)}(E^{\theta}_{n,n+1}+E^{\theta}_{h,h+1})({\bf 0},r)=(E^{\theta}_{n,n+1}+E^{\theta}_{h,h+1})({\bf 0},r)\\
&=&\up^{\b'_n(E^{\theta}_{h+1,h},n)}(E^{\theta}_{h,h+1}+E^{\theta}_{n+1,n})({\bf 0},r)+c_{E^{\theta}_{h,h+1}} E^{\theta}_{h,h+1}(-\bfe_n,r)\\
&=&E^{\theta}_{n,n+1}({\bf 0},r)E^{\theta}_{h,h+1}({\bf 0},r),
\end{eqnarray*}
as desired. The proof of (3) is similar.
\end{proof}
We end this section with an application by showing the map in \eqref{rho3} is surjective. For $i\in[1,n]$, $m\in\N$, let
$
k_{i,r}:=
O(\bfe_i,r),$ and $\big[{k_{i,r};0\atop m}\big]$ as defined in \eqref{Kbinom}.
 Then, for any $\la\in\La(n,r)$, we have in $\SenrZ$
\begin{equation}\label{k over la}
\Big[{k\atop\la}\Big]:=\prod_{i=1}^{n}\begin{bmatrix}k_{i,r};0\\\l_{i,r}\end{bmatrix}=[\hla].
\end{equation}
See \cite[Lemma 6.4]{DW} for a proof.
\begin{coro} \label{rho4}
The $\rho^\imath_{r}$ in \eqref{rho3} is surjective.
\end{coro}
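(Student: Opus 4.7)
The plan is, via the identification $\End_{\bsH(r)}(\Omega_{2n}^{\otimes r})\cong\SenrQ$ afforded by $\tilde\eta_r$ of \eqref{teta} (with the minor adjustment $\End_{\bsH(r)}(\Omega_{2n}^{\otimes r})\cong f\bsS^\imath(r,r)f\cong\SenrQ$ in the case $n<r$), to show that the image of $\tilde\eta_r\circ\rho^\imath_r$ contains the full standard basis $\{[A]\mid A\in\Xienr\}$ of $\SenrQ$. By Corollary \ref{motivation ex}, together with inverting $d_j$ and the observation that $t-O(-\bfe_n,r)=E^\theta_{n+1,n}(\bfl,r)=E^\theta_{n,n+1}(\bfl,r)$, this image contains $O(\pm\bfe_j,r)$ for $j\in[1,n]$ and $E^\theta_{h+1,h}(\bfl,r), E^\theta_{h,h+1}(\bfl,r)$ for $h\in[1,n]$.

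The driving tool is the triangular relation of Theorem \ref{TR}: $\sfm(A)=[A]+(\text{lower terms})_{\preccurlyeq}$, where $\sfm(A)$ is a product of factors of the shape $[D_{i,h,j}+a_{i,j}E^\theta_{h+1,h}]$ with $D_{i,h,j}$ a $\theta$-symmetric diagonal matrix, i.e. of the form $\hmu$ for some $\mu$. To put every such factor into the image, I would proceed in two preparatory steps. Step~1 uses the polynomial identity \eqref{k over la} in conjunction with $O(\pm\bfe_j,r)$ to obtain every idempotent $[\hla]$ for $\la\in\La(n,r)$. Step~2 obtains the divided powers $(aE^\theta_{h+1,h})(\bfl,r)$ and $(aE^\theta_{h,h+1})(\bfl,r)$ for every $a\in\mathbb N$: for $h<n$ this is exactly Corollary \ref{DP}(1), while for $h=n$ one iteratively inverts Corollary \ref{DP}(2), rewriting each auxiliary term $(sE^\theta_{n,n+1})(\bfj_{s,t},r)$ with $s<m$ as a scalar multiple of $O(\bfj_{s,t},r)\cdot(sE^\theta_{n,n+1})(\bfl,r)$ via Theorem \ref{longMF}(1) (the $O(\bfj_{s,t},r)$ being products of the already-available $O(\pm\bfe_j,r)$ as $O(\bfj,r)O(\bfj',r)=O(\bfj+\bfj',r)$). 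With these in hand, the product $(aE^\theta_{h+1,h})(\bfl,r)\cdot[\hmu]$ collapses, by the column/row matching condition \eqref{weight idemp}, to the single basis element $[aE^\theta_{h+1,h}+\hnu]$ with $\nu=\mu-a\bfe_h$ (valid whenever $\mu_h\geq a$, which is automatic from the construction of $D_{i,h,j}$), so each factor of $\sfm(A)$ lies in the image.

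The conclusion then follows by induction on the preorder $\preccurlyeq$ of \eqref{prec} on $\Xienr$. The base case ($A$ diagonal) is Step~1. For the inductive step, $\sfm(A)$ is in the image by the paragraph above, and Theorem \ref{TR} presents $[A]$ as $\sfm(A)$ minus lower terms already in the image by hypothesis; hence $[A]$ lies in the image. Since $\{[A]\mid A\in\Xienr\}$ is a $\mathbb Q(\up)$-basis of $\SenrQ$, surjectivity of $\rho^\imath_r$ follows. I expect the main obstacle to be the recursive handling of Step~2 at $h=n$: Corollary \ref{DP}(2) provides only an implicit expression with auxiliary twist parameters $\bfj_{s,t}$, and the hard bookkeeping is in verifying inductively that each such term has already been produced. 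Everything else is a direct transcription of the type $A$/$B$ strategy (cf.\ \cite{DW}).
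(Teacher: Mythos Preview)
Your proposal is correct and follows essentially the same route as the paper: obtain the idempotents $[\hla]$ and the elements $(mE^\theta_{h+1,h})(\bfl,r)$, $(mE^\theta_{h,h+1})(\bfl,r)$ in the image (the paper cites Corollaries~\ref{DP} and~\ref{motivation ex} for this, while you unpack the $h=n$ case explicitly), then invoke the triangular relation of Theorem~\ref{TR} and induct on $\preccurlyeq$. Your more detailed treatment of Step~2 at $h=n$---the inductive inversion of Corollary~\ref{DP}(2) via $O(\bfj,r)\cdot(sE^\theta_{n,n+1})(\bfl,r)$---is exactly what the paper's one-line citation of Corollary~\ref{DP} is implicitly appealing to, so the anticipated ``obstacle'' dissolves just as you describe.
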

\begin{proof} By Corollaries \ref{DP} and \ref{motivation ex}, elements $[\hla]$, $(mE^{\theta}_{h+1,h})(\bfl,r)$, $(mE^{\theta}_{h,h+1})(\bfl,r)$ are all in the image of $\tilde\eta_r\circ\rho^\imath_{r}$, where $\tilde\eta_r$ is given in \eqref{teta}. Thus, for any $A\in\Xienr$, Theorem \ref{TR} implies that
$$\prod _{(i,h,j)\in(\mathscr T_{2n},\leq)}(a_{i,j}E^{\theta}_{h+1,h})(\bfl,r)\cdot[\co(A)]=[A]+(\text{lower terms})_\preccurlyeq,$$
where the product is taken over the order $\leq$ on the set $\mathscr T_{2n}$ defined in \eqref{set F} and \eqref{order leq}.
Hence, all $[A]\in\im(\tilde\eta_r\circ\rho^\imath_{r})$. Consequently, $\rho^\imath_{r}$ is surjective.
\end{proof}
We remark that the epimorphism was first obtained by Bao--Wang in \cite[Thm.~5.4]{BW}.

\section{A BLM type construction for $\bfUin$}
We are ready to give a new presentation for $\bfUin$ via a basis and multiplication formulas of basis elements by generators.

Consider the $\Q(\up)$-algebra of the direct product of $\SenrQ$:
$$\bsS^\imath(n):=\prod_{r\geq0}\SenrQ.$$
For all $A\in\Xienl$ and $\bfj\in\Z^{2n}$, write
$$A(\bfj)=(A(\bfj,r))_{r\in\mathbb N}.$$ 
The algebra homomorphisms in \eqref{rho3} and isomorphisms in  \eqref{teta} 
induce a homomorphism and isomorphism, respectively,
$$\aligned
\rho^\imath:=(\rho^\imath_{r})_{r\in\N}:&\bfUin\lra\prod_{r\geq0}\End_{\bsH(r)}(\Om_{2n}^{\otimes r})\\
\tilde\eta:=\Pi_{r\geq0}\tilde\eta_{r}:&\prod_{r\geq0}\End_{\bsH(r)}(\Om_{2n}^{\otimes r})\lra\bsSen.
\endaligned$$
Recall also the involution $\omega$ given in \eqref{omega}.
Corollary \ref{motivation ex} gives immediately the following result.

\begin{prop}\label{phii}
There is a $\mathbb{Q}(\up)$-algebra injective homomorphism
$$\phi^\imath:=\tilde\eta\circ\rho^\imath\circ\omega: \bfUin\lra \bsSen$$
such that $e_h\mapsto E^{\theta}_{h,h+1}({\bf 0})$, $f_h\mapsto E^{\theta}_{h+1,h}({\bf 0})$, $d_i^{\pm 1}\mapsto O(\pm \bfe_i)$, and $t\mapsto E^{\theta}_{n,n+1}({\bf 0})+O(-\bfe_n)$ for all $1\leq h\leq n-1$ and $1\leq i\leq n$.
 \end{prop}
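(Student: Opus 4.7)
The plan is to verify three things in order: that $\phi^\imath$ is a well-defined $\Q(\up)$-algebra homomorphism, that its values on the generators of $\bfUin$ match the displayed formulas, and that it is injective. The first two are essentially formal consequences of what has already been proved; injectivity is the only point that needs genuine input.

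For the first two, I would note that $\omega$ is an algebra automorphism of $\bfUin$ by \eqref{omega}; each component $\rho^\imath_r$ is an algebra homomorphism into $\End_{\bsH(r)}(\Om_{2n}^{\otimes r})$ by Theorem \ref{rH action}; and $\tilde\eta=\prod_r\tilde\eta_r$ is an algebra isomorphism onto $\bsSen$ (through \eqref{teta} when $n\geq r$, and via the idempotent identification $f\bsS^\imath(r,r)f\cong \bsS^\imath(n,r)$ recorded before \eqref{bimod} when $n<r$). Hence $\phi^\imath$ is an algebra homomorphism. The formulas on generators then drop out of Corollary \ref{motivation ex} once we precompose with $\omega$: since $\omega$ swaps $e_h\leftrightarrow f_h$, sends $d_i\mapsto d_i^{-1}$, and fixes $t$, we get $\phi^\imath(e_h)=\tilde\eta\rho^\imath(f_h)=E^\theta_{h,h+1}(\bfl)$, $\phi^\imath(f_h)=E^\theta_{h+1,h}(\bfl)$, $\phi^\imath(d_i^{\pm 1})=O(\pm\bfe_i)$, and $\phi^\imath(t)=E^\theta_{n+1,n}(\bfl)+O(-\bfe_n)$. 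The last expression coincides with the claimed $E^\theta_{n,n+1}(\bfl)+O(-\bfe_n)$ by the symmetry $E^\theta_{n,n+1}=E^\theta_{n+1,n}$ noted after \eqref{Eij}.

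For injectivity, since $\tilde\eta$ is an isomorphism and $\omega$ an automorphism, it suffices to show that $\rho^\imath=(\rho^\imath_r)_{r\geq 0}$ is injective. The plan is to factor through $\bfU(\mathfrak{gl}_{2n})$: by definition $\rho^\imath_r=\rho_r\circ\iota$, and Lemma \ref{iota} gives the injectivity of $\iota:\bfUin\hookrightarrow\bfU(\mathfrak{gl}_{2n})$, so it is enough that the combined tensor-space action $\prod_r\rho_r:\bfU(\mathfrak{gl}_{2n})\to\prod_r\End(\Om_{2n}^{\otimes r})$ be faithful. This is the expected main obstacle, but it is a classical fact from the type $A$ Schur--Weyl theory: every finite-dimensional polynomial module of $\bfU(\mathfrak{gl}_{2n})$ appears as a composition factor in some $\Om_{2n}^{\otimes r}$, so the intersection of the annihilators of these tensor powers is trivial (see, e.g., \cite[Ch.~14]{DDPW08}). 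A self-contained alternative, if desired, would be to combine the triangular relation of Theorem \ref{TR} with the long multiplication formulas of Theorem \ref{longMF} to identify $\phi^\imath$ on a spanning set and read off linear independence directly; this is the route that will naturally feed into the description of $\im(\phi^\imath)$ in the subsequent sections.
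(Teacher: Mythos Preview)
Your proof is correct and follows essentially the same approach as the paper: both verify that $\phi^\imath$ is an algebra homomorphism as a composite of such, read off the generator formulas from Corollary \ref{motivation ex} combined with the automorphism $\omega$, and deduce injectivity by factoring $\rho^\imath=\rho\circ\iota$ with $\iota$ injective (Lemma \ref{iota}) and invoking the classical faithfulness of $\rho=(\rho_r)_r$ on $\bfU(\mathfrak{gl}_{2n})$. Your additional remark on the alternative route via Theorems \ref{TR} and \ref{longMF} is a reasonable aside but not needed here.
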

 \begin{proof}Clearly, $\phi^\imath$ is an algebra homomorphism. Since $\rho^\imath$ is the restriction to $\iota(\bfUin)$ of the injective algebra homomorphism
$$\rho=(\rho_{r})_{r\in\N}:\bfU(\mathfrak{gl}_{2n})\lra\prod_{r\geq0}\End_{\bsH(\fS_r)}(\Om_{2n}^{\otimes r}),$$
where $\rho_{r}$ are given in \eqref{rho2}, it follows that $\phi^\imath$ is injective.
 \end{proof}
We now determine the image of $\phi^\imath$. Let $\Ain$ be the subspace of $\bsSen$ spanned by the linear independent set\footnote{See, e.g., \cite[Prop. 4.1]{DF} for one proof.}
$$\mathcal B=\{A(\bfj)\mid A\in\Xienl,\bfj\in\Z^{2n}\}.$$

\begin{theo}\label{epi} We have $\Ain=\im(\phi^\imath)$. Moreover, the algebra homomorphism $\phi^\imath$ induces an algebra isomorphism
$\phi^\imath:\bfUin\to\Ain$.
\end{theo}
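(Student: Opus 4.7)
The plan is to prove the two inclusions $\im(\phi^\imath)\subseteq\Ain$ and $\Ain\subseteq\im(\phi^\imath)$; combined with the injectivity of $\phi^\imath$ from Proposition \ref{phii}, this will yield the stated isomorphism.

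For the first inclusion, each generator image listed in Proposition \ref{phii} is visibly a $\Q(\up)$-linear combination of elements of the form $B(\bfj)$ with $B\in\Xienl$ and $\bfj\in\Z^{2n}$, and hence lies in $\Ain$. Moreover, the long multiplication formulas of Theorem \ref{longMF} show that left- and right-multiplication by any generator image sends each $A(\bfj)\in\Ain$ to a $\Q(\up)$-linear combination of elements $A'(\bfj')$; parts (1)--(4) together cover all four types of generators. An easy induction on the length of a word in the generators then gives $\im(\phi^\imath)\subseteq\Ain$.

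For the reverse inclusion, I show that every spanning element $A(\bfj)$ of $\Ain$ lies in $\im(\phi^\imath)$, in four stages. First, every $O(\bfj)$ is in the image: the element $O(\bfj)$ depends only on $\bfj^*\in\Z^n$, the generators $O(\pm\bfe_i)$ map to the standard basis of $\Z^n$ under the correspondence $\bfj\mapsto\bfj^*$, and Theorem \ref{longMF}(1) specialized to $A=O$ gives $O(\bfj)O(\bfj')=O(\bfj+\bfj')$. Second, for $h\in[1,n-1]$ the elements $E^\theta_{h,h+1}(\bfj)$ and $E^\theta_{h+1,h}(\bfj)$ are obtained from $\phi^\imath(e_h)=E^\theta_{h,h+1}(\bfl)$ and $\phi^\imath(f_h)=E^\theta_{h+1,h}(\bfl)$ by right-multiplication by $O(\bfj)$ via Theorem \ref{longMF}(1); for $h=n$ the same argument applies once one recognises $E^\theta_{n,n+1}(\bfl)=E^\theta_{n+1,n}(\bfl)=\phi^\imath(t-d_n^{-1})$. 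Third, the divided-power elements $(mE^\theta_{h,h+1})(\bfj)$ and $(mE^\theta_{h+1,h})(\bfj)$ lie in $\im(\phi^\imath)$ by Corollary \ref{DP}(1) for $h\ne n$, and by Corollary \ref{DP}(2) combined with induction on $m$ for $h=n$.

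Fourth, for arbitrary $A\in\Xienl$ I prove $A(\bfl)\in\im(\phi^\imath)$ by induction on $A$ under the preorder $\preccurlyeq$, using the product
$$\Pi_A:=\prod_{(i,h,j)\in(\mathscr T_{2n},\leq)}(a_{i,j}E^\theta_{h+1,h})(\bfl)\in\im(\phi^\imath).$$
At each level $r\geq|A|/2$, the weight compatibility \eqref{weight idemp} forces the surviving terms in the expansion to be parametrised by a single ``initial diagonal'' $\hmu$, and each such choice produces exactly the product $\sfm(A+\hmu)$ of Theorem \ref{TR}, hence $[A+\hmu]+(\text{lower})_\preccurlyeq$. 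Summing over $\hmu\in\La(n,r-|A|/2)$ gives $\Pi_A=A(\bfl)+(\text{lower terms})_\preccurlyeq$, where Corollary \ref{TR1} allows the lower terms to be regrouped as a $\Q(\up)$-linear combination of elements $B(\bfj_B)$ with $B\prec A$. By the induction hypothesis each such $B(\bfj_B)\in\im(\phi^\imath)$, hence so is $A(\bfl)$; finally $A(\bfj)=\up^{-\bfj\centerdot\co(A)}A(\bfl)\,O(\bfj)\in\im(\phi^\imath)$ by Theorem \ref{longMF}(1). The delicate step I expect to be the main obstacle is this last regrouping: verifying that the lower-order contributions from $\sfm(A+\hmu)$, after summing over $\hmu$, organise into honest ``long'' elements $B(\bfj_B)$ so that the $\preccurlyeq$-induction closes. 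Theorem \ref{TR} supplies the structure at each single level, and Corollary \ref{TR1} is designed precisely to handle the mixing of diagonal shifts in the $h=n$ row, where the argument genuinely departs from the type $B$ treatment of \cite{DW}.
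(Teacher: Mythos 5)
Your proposal is correct and follows essentially the same route as the paper: the forward inclusion via Theorem \ref{longMF}, and the reverse inclusion via the product $\prod_{(i,h,j)}(a_{i,j}E^{\theta}_{h+1,h})(\bfl)$ whose level-$r$ components are the sums $\sum_{\mu}\sfm(A+\hmu)$, with the triangular relation of Theorem \ref{TR} (supplemented by Corollary \ref{TR1} and Corollary \ref{DP} for the $h=n$ row) giving $A(\bfl)+(\text{lower terms})_{\preccurlyeq}$ and a well-founded induction closing the argument. The only cosmetic difference is that you induct directly on the (finite, since $\preccurlyeq$-down-sets in $\Xienl$ are finite) partial order $\prec$ rather than on the paper's numerical invariant $\|A\|$, and you defer the same uniform-in-$r$ regrouping of lower terms into honest elements $B(\bfj)$ that the paper itself delegates to the argument of \cite[Lemma 5.1]{DW}.
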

\begin{proof} The proof for the first assertion is somewhat standard (see that of \cite[Thm. 5.2]{DW} or \cite[Lem.~5.5]{BLM}). We outline it as follows.

By Proposition \ref{phii}, $\im(\phi^\imath)$ is generated by 
$$E^{\theta}_{i,i+1}({\bf 0}), \; E^{\theta}_{i+1,i}({\bf 0}),\;  O(\pm \bfe_i)$$
 for all $1\leq i\leq n$ (Note that $E^{\theta}_{n,n+1}=E^{\theta}_{n+1,n}$). By Theorem \ref{longMF}, $\im(\phi^\imath)\subseteq\Ain$. We now prove the reverse inclusion.
 
 For $A\in\Xienl,(i,h,j)\in\mathscr T_{2n}$ as in \eqref{set F},  $(\phi^\imath)^{-1}\big((a_{i,j}E^{\theta}_{h+1,h})(\bfl)\big)=\begin{cases}f_h^{(a_{i,j})},&\text{if }h<n;\\
 e_{2n-h}^{(a_{i,j})},&\text{if }h>n,\end{cases}$ by Corollary \ref{DP}. Let
 $${}^{(a_{i,j})\!}g:=(\phi^\imath)^{-1}\big((a_{i,j}E^{\theta}_{n+1,n})(\bfl)\big).$$
 Note that ${}^{(a_{i,j})\!}g$ is not a divided power and $g:={}^{(1)\!}g=t-d_n^{-1}$ by Proposition \ref{phii}.
 
 For $A\in\Xienl$, define
\begin{equation}\label{MB}
\fkmAl:=\prod _{(i,h,j)\in(\mathscr T_{2n},\leq)}(\phi^\imath)^{-1}\big((a_{i,j}E^{\theta}_{h+1,h})(\bfl)\big)\in\bfUin,
\end{equation}
where the product is taken with respect to the order $\leq$ (cf. \cite[(5.0.3)]{DW}). We claim that
$$\scmAl:=\phi^\imath(\fkmAl)=\prod _{(i,h,j)\in(\mathscr T_{2n},\leq)}(a_{i,j}E^{\theta}_{h+1,h})(\bfl)=A(\bfl)+\text{a linear comb. of $B(\bfj)$ with }B\prec A.$$
By Theorem \ref{longMF} and Corollary \ref{DP}, $\scmAl\in\im(\phi^\imath)$ is a linear combination of $B(\bfj)$. We need to determine its leading term.

Let 
\begin{equation}\label{pi_r}
\pi_r:\boldsymbol\sS^\imath(n)\lra\SenrQ
\end{equation}
 be the canonical projection on $r$th component. Then
$$\pi_r(\scmAl)=\prod _{(i,h,j)\in(\mathscr T_{2n},\leq)}(a_{i,j}E^{\theta}_{h+1,h})(\bfl,r),$$
 Now a proof similar to that of \cite[Lemma 5.1]{DW} via Theorem \ref{TR} together Corollary \ref{DP} shows that
$$\prod _{(i,h,j)\in(\mathscr T_{2n},\leq)}(a_{i,j}E^{\theta}_{h+1,h})(\bfl,r)=A(\bfl,r)+\text{a linear combination of $C(\bfj,r)$ with }C\prec A,$$
proving the claim.

 Finally, by induction on $\|A\|:=\sum_{i=1}^{2n}{j-i\choose 2}(a_{i,j}+a_{j,i})$, a proof similar to that of \cite[Thm.~5.2]{DW} shows that every $A(\bfl)\in\im(\phi^\imath)$, and hence, all $A(\bfj)\in\im(\phi^\imath)$.
\end{proof}

By identifying $\bfUin$ with $\Ain$ via $\phi^\imath$, we now summarise our discovery so far in the following result. 

Let $\fkm^{A,\bfj}=O(\bfj)\scmAl=O(\bfe_1)^{j^*_1}\cdots O(\bfe_n)^{j^*_n}\scmAl$, where $(j^*_1,\ldots,j^*_n)=\bfj^*$ is defined in \eqref{j*}. 
Let $\Z^{n*}$ denote the subset of $\Z^{2n}$ consisting of $(\bfj,0^n)$ for all $\bfj\in\Z^n$.

\begin{theo}\label{summary} The $i$-quantum group $\bfUin$ has two bases
$$\aligned
\mathcal B&=\{A(\bfj)\mid A\in\Xienl,\bfj\in\Z^{2n}\}=\{A(\bfj)\mid A\in\Xienl,\bfj\in\Z^{n*}\} \text{ and }\\
\mathcal M&=\{\fkm^{A,\bfj}\mid A\in\Xienl,\bfj\in\Z^{2n}\}=\{\fkm^{A,\bfj}\mid A\in\Xienl,\bfj\in\Z^{n*}\}.
\endaligned$$
Furthermore, with the notation in \eqref{al-}, we may present $\bfUin$ by the basis $\mathcal B$ and the following multiplication formulas by generators $d_j,e_h,f_h,g=t-d_n^{-1}$:
\begin{itemize}
\item[] \vspace{-3ex}
$$(1)\;\;d_j\cdot A({\bf j})=\up^{\sum_{i=1}^{2n}a_{j,i}}A({\bf {j+\bfe_j}}),\quad A({\bf j})\cdot d_j=\up^{\sum_{i=1}^{2n}a_{i,j}}A({\bf {j+\bfe_j}});\qquad\qquad\quad\;\;\quad\;$$
\item[]\vspace{-3ex}
\begin{equation*}
\aligned
(2)\;\;e_h\cdot A({\bf j})&=\sum_{\substack{1\leq p<h\\ a_{h+1,p}\geq 1}}\up^{\b_p (A,h)}\overline{[\![a_{h,p}+1]\!]}
(A+E^{\theta}_{h,p}-E^{\theta}_{h+1,p})({\bf j}+\alpha_{h})\\
&+\varepsilon \frac{\up^{\b_h(A,h)-j_h-j_{2n+1-h}}}{\up-\up^{-1}}\Big((A-E^{\theta}_{h+1,h})({\bf j}+\al _h)-(A-E^{\theta}_{h+1,h})({\bf j}+\al^-_h)\Big)\\
&+ \up^{\b_{h+1}(A,h)+j_{h+1}+j_{2n-h}}\overline{[\![a_{h,h+1}+1]\!]}(A+E^{\theta}_{h,h+1})({\bf j})\\
&+\sum_{\substack{h+1<p\leq 2n\\ a_{h+1,p}\geq 1}}\up^{\b_p(A,h)}\overline{[\![a_{h,p}+1]\!]}
(A+E^{\theta}_{h,p}-E^{\theta}_{h+1,p})({\bf j})\\
\endaligned
\end{equation*}
where $\varepsilon=\delta^\leq_{1,a_{h+1,h}}$.
\item[]\vspace{-3ex}
\begin{equation*}
\aligned
(3)\; \;f_h\cdot A({\bf j})&=
\sum_{\substack{1\leq p<h\\ a_{h,p}\geq 1}}\up^{\b'_p(A,h)}\overline{[\![a_{h+1,p}+1]\!]}
(A-E^{\theta}_{h,p}+E^{\theta}_{h+1,p})({\bf j})\\
&+ \up^{\b'_h(A,h)+j_{h}+j_{2n+1-h}}\overline{[\![a_{h+1,h}+1]\!]}(A+E^{\theta}_{h+1,h})({\bf j})\\
&+\varepsilon'\frac{\up^{\b'_{h+1}(A,h)-j_{h+1}-j_{2n-h}}}{\up-\up^{-1}}\big((A-E^{\theta}_{h,h+1})({\bf j}-\al _h)-(A-E^{\theta}_{h,h+1})({\bf j}+\al^-_h)\big)\\
&+\sum_{\substack{h+1<p\leq 2n\\ a_{h,p}\geq 1}}\up^{\b'_p(A,h)}\overline{[\![a_{h+1,p}+1]\!]}
(A-E^{\theta}_{h,p}+E^{\theta}_{h+1,p})({\bf j}-\al_h),
\endaligned
\end{equation*}
where $\varepsilon'=\delta_{1,a_{h,h+1}}^\leq$ is given in \eqref{c_A}.
\item[]\vspace{-3ex}
 \begin{equation*}\aligned
(4)\;\;g\cdot& A({\bf j})=\sum_{i\neq n,n+1\atop a_{n,i}\geq1}\up^{\b'_i(A,n)-\delta^\leq_{n+1,i}}\overline{[\![a_{n+1,i}+1]\!]}(A-E^{\theta}_{n,i}+E^{\theta}_{n+1,i})({\bf j})\\
&\quad+\up^{\b'_n(A,n)+j_n+j_{n+1}}\overline{[\![a_{n+1,n}+1]\!]}(A+E^{\theta}_{n+1,n})({\bf j})\\
&\quad+\delta^\leq_{1,a_{n,n+1}}\frac{\up^{\b'_{n+1}(A,n)-j_{n+1}-j_n}}{\up-\up^{-1}}\Big((A-E^{\theta}_{n,n+1})({\bf j})-(A-E^{\theta}_{n,n+1})({\bf j}-\bfe^\theta_n)\Big)\\
&\quad+ c_AA({\bf j}-\bfe_n)\\
\endaligned
\end{equation*}
 where $c_A$ is defined in \eqref{c_A}.
\end{itemize}
\end{theo}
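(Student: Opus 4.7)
The plan is to identify $\bfUin$ with its image $\Ain$ under $\phi^\imath$ (Theorem~\ref{epi}) and then verify the three assertions---that $\mathcal B$ is a basis, that $\mathcal M$ is a basis, and that the four multiplication formulas hold---by harvesting the machinery already developed.

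For $\mathcal B$: the set $\{A(\bfj)\mid A\in\Xienl,\bfj\in\Z^{2n}\}$ is linearly independent by construction of $\Ain$, and by Theorem~\ref{epi} it spans $\Ain\cong\bfUin$; hence it is a basis. The reduction to $\bfj\in\Z^{n*}$ follows from the remark made just after \eqref{j*}: since $\hla\centerdot\bfj=\la\centerdot\bfj^*$, one has $A(\bfj)=A(\bfj^*)$ as elements of $\Ain$, and viewing $\bfj^*\in\Z^n$ as an element of $\Z^{n*}\subset\Z^{2n}$ (padded with $n$ zeros) identifies the two indexing sets. The analogous identity $O(\bfj)=O(\bfj^*)$ shows the same for $\mathcal M$.

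For $\mathcal M$, I would lift the triangular relation $\scmAl=A(\bfl)+(\text{lower terms})_\preccurlyeq$ established inside the proof of Theorem~\ref{epi}. Left multiplication by $O(\bfj)$, handled componentwise via Theorem~\ref{longMF}(1), yields
\begin{equation*}
\fkm^{A,\bfj}\;=\;O(\bfj)\,\scmAl\;=\;\up^{\ro(A)\centerdot\bfj}\,A(\bfj)\;+\;\sum_{B\prec A}c_{B,\bfj^\prime}\,B(\bfj+\bfj^\prime)
\end{equation*}
for some scalars $c_{B,\bfj^\prime}\in\Q(\up)$. Order $\Xienl\times\Z^{n*}$ so that $A\prec A'$ implies $(A,\bfj)<(A',\bfj')$ for all $\bfj,\bfj'$, and refine arbitrarily within each slice of fixed $A$. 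The transition matrix from $\mathcal M$ to $\mathcal B$ is then block triangular with each diagonal block itself diagonal, with entries the units $\up^{\ro(A)\centerdot\bfj}\in\Q(\up)^{\times}$. It is therefore invertible, so $\mathcal M$ is a basis; equivalently, linear independence and spanning can be proved directly by induction on the partial order $\preccurlyeq$.

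For the four multiplication formulas, Proposition~\ref{phii} together with $g=t-d_n^{-1}$ identifies
$$\phi^\imath(d_j)=O(\bfe_j),\quad\phi^\imath(e_h)=E^\theta_{h,h+1}(\bfl),\quad\phi^\imath(f_h)=E^\theta_{h+1,h}(\bfl),\quad\phi^\imath(g)=E^\theta_{n,n+1}(\bfl).$$
Applying the projections $\pi_r$ of \eqref{pi_r} componentwise reduces each identity in $\bsSen$ to the corresponding identity in $\SenrQ$, which is exactly the content of Theorem~\ref{longMF}(1)--(4); formula~(1) additionally uses $\ro(A)\centerdot\bfe_j=\sum_i a_{j,i}$ and $\co(A)\centerdot\bfe_j=\sum_i a_{i,j}$. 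The main obstacle has already been cleared upstream: producing the refined leading-term formula for the $(n{+}1,n)$-generator (Proposition~\ref{LThn}(3)) and wiring it into the triangular relation (Theorem~\ref{TR}) both feed directly into the argument above. Given those inputs, the present theorem amounts to a careful repackaging rather than a new calculation.
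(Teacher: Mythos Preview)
Your proposal is correct and follows essentially the same route as the paper: identify $\bfUin$ with $\Ain$ via Theorem~\ref{epi}, read off $\mathcal B$ as a basis from the definition of $\Ain$ together with the reduction $A(\bfj)=A(\bfj^*)$, deduce $\mathcal M$ is a basis from the triangular relation $\scmAl=A(\bfl)+(\text{lower terms})_\preccurlyeq$ established in the proof of Theorem~\ref{epi}, and obtain formulas (1)--(4) as the componentwise lift of Theorem~\ref{longMF}. Your explicit block-triangular transition-matrix argument for $\mathcal M$ is a bit more detailed than the paper's one-line ``Hence, the assertion for $\mathcal M$ follows,'' but the content is the same.
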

\begin{proof}We only make some comments on the first assertion. In the two descriptions for each basis,
the first ones have duplications since $A(\bfj)=A(\bfj^*)$ and $\fkm^{A,\bfj}=\fkm^{A,\bfj^*}$ where $\bfj^*$ is defined in \eqref{j*} and regarded as an element of $\sZ^{n*}$. The second ones are a more accurate description.\footnote{A similar description for \cite[Cor. 5.3]{DW} is also needed to avoid possible confusions.} The two descriptions giving the same basis set follows from the fact that the map $\Z^{2n}\to\Z^n, \bfj\mapsto\bfj^*$ is surjective.  One then uses a standard argument (see, e.g., the proof of \cite[Prop. 4.1]{DF}) to prove that the (second) set for $\mathcal B$
is linearly independent. Hence, the assertion for $\mathcal M$ follows,
\end{proof}
\begin{remark}
The presentation given in the theorem defines the regular representation of $\bfUin$. It would be interesting to know if the regular representation can be constructed directly via the $\up$-differential operator approach developed in \cite{DZ}.
\end{remark}

\section{Finite symplectic groups and quantum hyperalgebras of $\bfUin$}
We now bring finite symplectic groups into the game. As seen from \cite{BKLW}, the $q$-Schur algebra $\SenrZ$ or rather its $\sA$-form $\SenrA$, where $\sA=\Z[\bsq]$ ($\bsq=\up^2$), has a convolution algebra specialisation via a certain flag variety of type $C$. This specialisation links such algebras to representations  of finite symplectic groups in certain Harish-Chandra series at the cross-characteristic level. Now, the new construction of $\bfUin$ developed in \S7 allows us to extend further the link to representations of a certain hyperalgebra of $\bfUin$.


For any field $k$, let $\GL_{2n}(k)$ be the genernal linear group over $k$ and consider the group isomorphism
\begin{equation}\notag\label{vartheta}
\vartheta:\GL_{2n}(k)\longrightarrow \GL_{2n}(k),\;x\longmapsto J^{-1}(x^t)^{-1}J,
\end{equation}
where $J=\begin{pmatrix}0&I_n\\-I_n&0\end{pmatrix}$ with the $n\times n$ identity matrix $I_n$.
The symplectic group
$$\Sp_{2n}(k):=\{x\in\GL_{2n}(k)\mid J=x^tJx\}$$
is the fixed-point group of $\vartheta$.

Let  $G(q):=\Sp_{2r}(k)$ for $k=\mathbb F_q$, the finite field of $q$ elements. 

For $\la\in \Lambda(n,r)$, let $P_{\hla}(q)$ be the standard parabolic subalgebra of $\GL_{2r}(\mathbb F_q)$ associated with $\hla$, consisting of upper quasi-triangular matrices with blocks of sizes $\hla_i$ on the diagonal.  Let
$$P_\la(q)=P_{\hla}(q)\cap G(q).$$
Then $G(q)$ acts on the set $G(q)/P_\la(q)$ of left cosets $gP_\la(q)$ in $G(q)$. For any commutative ring $R$, this action induces a permutation representation over $R$ which is isomorphic to the induced representations $\text{Ind}_{P_{\la}(q)}^{G(q)}1_R$ of the trivial representation $1_R$ of $P_\la(q)$ to $G(q)$ 
and define
\begin{equation}\label{UPB}
\mathcal E_{q,R}(n,r)=\End_{RG(q)}\bigg(\bigoplus_{\la\in\Lambda(n,r)}\text{Ind}_{P_{\la}(q)}^{G(q)}1_R\bigg)^{\text{op}}.
\end{equation}
For any integral domain $R$ with $q\in R$, base change via the
specialisation $\sA\to R, \up^2\mapsto q$ induces an isomorphism
 $$\SenrR:=\SenrA\otimes_\sA R\cong \mathcal E_{q,R}(n,r).$$
See \cite[Prop. 6.6]{BKLW}\footnote{The algebra $\mathbf S^\imath$ in \cite{BKLW} is the centraliser subalgebra $\SnriZ$ given in \eqref{Sji}.} or \cite[Thm.~4.2]{LW}. 

\begin{remark} By Remark \ref{Sjnr}, we may also introduce the $q$-Schur algebra $\SnrA$ of type $B$ over $\mathcal A=\Z[\bsq]$ which has an interpretation similar to \eqref{UPB} via finite orthogonal groups $G(q)=\O_{2r+1}(q)$ with $P_\hla$ replaced by $P_\wla(q)$ and $\La(n,r)$ replaced by $\La(n+1,r)$; see \cite[(3.0.2)]{DW}. 
Here, for $\la=(\la_1,\ldots,\la_n,\la_{n+1})\in\La(n+1,r)$,
$\wla=(\la_1,\ldots,\la_n,2\la_{n+1}+1,\la_n,\ldots,\la_1).$
\end{remark}

As seen above, representations of $\SenrR$ is related to those of finite symplectic groups $G(q)$. If we can lift the epimorphism in Corollary \ref{rho4} to the integral level (i.e., a homomorphism from some quantum hyperalgebra $\UinR$ to $\SenrR$), then the representation category of $\SenrR$ is a full subcategory of that of $\UinR$. In this way, we establish a link between representations of $i$-quantum groups and finite symplectic groups in cross-characteristics.

To define $\UinR$, we need a candidate Lusztig type form $\UinZ$ of $\bfUin$. Traditionally,
$\UinZ$ is the $\ZZ$-subalgebra of $\bfUin$ generated by divided powers $e_{h}^{(m)}$, $f_{h}^{(m)}$, $t^{(m)}$ and $d_i$, $\begin{bmatrix}d_i;0\\s\end{bmatrix}$ for all $m,s\in\N$  , $1\leq h\leq n-1$ and $1\leq i\leq n$.  However, by Theorem \ref{summary}, $E^{\theta}_{n,n+1}(\bfl)O(-\bfe_n)=O(-\bfe_n)E^{\theta}_{n,n+1}(\bfl)$ and, by identifying $\bfUin$ as $\Ain$ under $\phi^\imath$, 
$$t^{(m)}=\frac1{[m]^!}(E^{\theta}_{n,n+1}(\bfl)+O(-\bfe_n))^m=\frac1{[m]^!}\sum_{j=0}^m{m\choose j}E^{\theta}_{n,n+1}(\bfl)^{m-j}O(-\bfe_n)^j.$$
Hence,
$$t^{(m)}=E^{\theta}_{n,n+1}(\bfl)^{(m)}+f_1E^{\theta}_{n,n+1}(\bfl)^{(m-1)}O(-\bfe_n)+\cdots+f_{m-1}E^{\theta}_{n,n+1}(\bfl)O(-\bfe_n)^{m-1}+f_mO(-\bfe_n)^m$$
for some $f_1,\ldots,f_m\in\Q(\up)$. These rational function coefficients in the display above and in Corollary \ref{DP} show that this form cannot be used as the image $\pi_{r}\circ\phi^\imath(\UinZ)$ cannot be inside $\SenrZ$. 

Motivated by the proof of Corollary \ref{motivation ex}, we make the following definition. 
\begin{defi} Let $\UinZ$ be the $\sZ$-subalgebra of $\bfUin$ generated by 
$$e_h^{(m)}, f_h^{(m)}, d_i, \begin{bmatrix}d_i;0\\s\end{bmatrix}, {}^{(m)\!}g,$$
for all $h,i\in[1,n]$ ($h\neq n$) and $m,s\in\N$.
\end{defi}

Note that, if  we identify $\bfUin$ with $\Ain$  as in Theorem \ref{summary}, then,
by Corollary \ref{DP}, the generators $e_h^{(m)}, f_h^{(m)},{}^{(m)\!}g$ can be unified as the generators
$(mE^{\theta}_{h,h+1})(\bfl), (mE^{\theta}_{h+1,h})(\bfl)$.

For $\la\in\N^n$ and $\tau\in\N_2^n$, where $\N_2=\{0,1\}$, let 
$$\bigg[{d\atop \la}\bigg]=\prod_{i=1}^n\begin{bmatrix}d_i;0\\\la_i\end{bmatrix},\quad
d^\tau=d_1^{\tau_1}d_2^{\tau_2}\cdots d_n^{\tau_n},$$
where $\big[{d_i;0\atop\la_i}\big]$ is defined in \eqref{Kbinom}.
Recall the elements $\fkm^{A,\bfl}$ defined in \eqref{MB} and the canonical projection $\pi_r$ in \eqref{pi_r}.
\begin{theo}The $\sZ$-algebra $\UinZ$ contains the basis
$$\mathcal M_\sZ=\bigg\{d^\tau\bigg[{d\atop \la}\bigg]\fkm^{A,\bf0}\mid A\in\Xienl,\tau\in\N_2^n,\la\in\N^n\bigg\}$$
for $\bfUin$. Hence, restricting the map $\phi^\imath$ in Proposition \ref{phii} to $\UinZ$
induces a surjective homomorphism
$$\phi_{r,\sZ}^\imath:=\pi_r\circ\phi^\imath:\UinZ\lra\SenrZ.$$
\end{theo}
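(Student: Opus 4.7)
The plan is to proceed in three steps: (i) verify $\mathcal M_\sZ\subset\UinZ$; (ii) establish $\mathcal M_\sZ$ as a $\mathbb Q(\up)$-basis of $\bfUin$; and (iii) prove that the restricted map $\phi^\imath_{r,\sZ}=\pi_r\circ\phi^\imath|_{\UinZ}$ lands in $\SenrZ$ and is surjective.

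For (i), recall from \eqref{MB} that $\fkm^{A,{\bf0}}$ is a product over $(i,h,j)\in\mathscr T_{2n}$ of the factors $(\phi^\imath)^{-1}((a_{i,j}E^\theta_{h+1,h})({\bf0}))$. Using Corollary \ref{DP}(1) together with the symmetry $E^\theta_{h+1,h}=E^\theta_{2n-h,2n-h+1}$, each such factor equals $f_h^{(a_{i,j})}$ when $h<n$, $e_{2n-h}^{(a_{i,j})}$ when $h>n$, or, by definition, ${}^{(a_{i,j})}g$ when $h=n$; all three are among the defining generators of $\UinZ$, so $\fkm^{A,{\bf0}}\in\UinZ$. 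Combined with the Cartan generators $d^\tau$ and $\bigg[{d\atop\la}\bigg]$, this yields $\mathcal M_\sZ\subset\UinZ$. For (ii), Theorem \ref{summary} already furnishes the $\mathbb Q(\up)$-basis $\mathcal M=\{O(\bfj)\fkm^{A,{\bf0}}\mid A\in\Xienl,\bfj\in\mathbb Z^{n*}\}$ of $\bfUin$, so it suffices to show that $\{d^\tau\bigg[{d\atop\la}\bigg]\}_{\tau,\la}$ and $\{O(\bfj)\}_{\bfj\in\mathbb Z^{n*}}$ span the same $\mathbb Q(\up)$-subspace, namely the commutative Cartan subalgebra generated by $d_1^{\pm 1},\dots,d_n^{\pm 1}$. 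This reduces to the classical invertible Kostant--Lusztig change of basis between $\{K^a\}_{a\in\mathbb Z}$ and $\{K^\tau\begin{bmatrix}K;0\\m\end{bmatrix}\}_{\tau\in\{0,1\},m\in\mathbb N}$ in the quantum torus $\mathbb Q(\up)[K^{\pm 1}]$, applied one variable at a time.

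For (iii), we first verify that each defining generator of $\UinZ$ maps into $\SenrZ$: by Corollary \ref{DP}(1), $\pi_r\phi^\imath(e_h^{(m)})=(mE^\theta_{h,h+1})({\bf0},r)$ and $\pi_r\phi^\imath(f_h^{(m)})=(mE^\theta_{h+1,h})({\bf0},r)$ for $h<n$; by the definition of ${}^{(m)}g$, $\pi_r\phi^\imath({}^{(m)}g)=(mE^\theta_{n+1,n})({\bf0},r)=\sum_{\mu\in\La(n,r-m)}[mE^\theta_{n+1,n}+\hmu]\in\SenrZ$; and a direct computation using $\phi^\imath(d_i)=O(\bfe_i)$ yields $d_i\mapsto\sum_\la\up^{\la_i}[\hla]$ and $\begin{bmatrix}d_i;0\\s\end{bmatrix}\mapsto\sum_{\mu\in\La(n,r)}\left[{\mu_i\atop s}\right][\hmu]$, all with $\sZ$-coefficients. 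For surjectivity, combining the last computation with the identity \eqref{k over la} gives $\pi_r\phi^\imath\big(\bigg[{d\atop\la}\bigg]\big)=[\hla]$ for $\la\in\La(n,r)$. The orthogonal-idempotent identity \eqref{weight idemp} then allows us to isolate each factor of the triangular product $\sfm(A)$ in Theorem \ref{TR}: namely, $[D_{i,h,j}+a_{i,j}E^\theta_{h+1,h}]=[\hnu]\cdot(a_{i,j}E^\theta_{h+1,h})({\bf0},r)$ for the unique $\nu\in\La(n,r)$ satisfying $\hnu=\ro(D_{i,h,j}+a_{i,j}E^\theta_{h+1,h})$, so it is in $\pi_r\phi^\imath(\UinZ)$. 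Hence $\sfm(A)\in\pi_r\phi^\imath(\UinZ)$, and Theorem \ref{TR} gives $[A]=\sfm(A)-\sum_{B\prec A}c_B[B]$ with $c_B\in\sZ$. An induction on the preorder $\preccurlyeq$ over $\Xienr$ then yields every basis element $[A]\in\pi_r\phi^\imath(\UinZ)$, completing surjectivity.

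The main obstacle I anticipate is step (ii): although the Kostant--Lusztig transition between Laurent monomials and divided Gaussian binomials is classical in quantum group theory, making precise the bijection between the index sets $\mathbb Z^{n*}$ and $\mathbb N_2^n\times\mathbb N^n$ and verifying the invertibility of the transition matrix over $\mathbb Q(\up)$ requires a careful one-variable inductive argument not explicitly recorded earlier in the paper, to be reconciled with the identification $\bfj=(\bfj^*,0^n)\in\mathbb Z^{n*}\subset\mathbb Z^{2n}$ used throughout Theorem \ref{summary}.
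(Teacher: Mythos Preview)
Your proposal is correct and follows essentially the same approach as the paper. Steps (i) and (ii) match the paper exactly: containment is by definition of the generators of $\UinZ$, and the basis claim reduces to the Kostant--Lusztig transition between $\{d^{\bfj}\}_{\bfj\in\Z^n}$ and $\{d^\tau\big[{d\atop\la}\big]\}_{\tau,\la}$ in the Laurent polynomial ring $\Q(\up)[d_1^{\pm1},\ldots,d_n^{\pm1}]$ (the paper cites \cite[Thm.~14.20]{DDPW08} for this).

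For step (iii) the paper is terser: it directly computes $\phi^\imath_{r,\sZ}\big(\big[{d\atop\ro(A)}\big]\fkm^{A',\bfl}\big)=[\ro(A)]\cdot\prod(a_{i,j}E^\theta_{h+1,h})(\bfl,r)=[A]+(\text{lower terms})$ and concludes that these images form a $\sZ$-basis of $\SenrZ$ by unitriangularity. Your route via $\sfm(A)$ is equivalent, since multiplying the product $\prod(a_{i,j}E^\theta_{h+1,h})(\bfl,r)$ on the left by $[\ro(A)]$ and propagating the weight-idempotent relation \eqref{weight idemp} through the factors recovers exactly $\sfm(A)$. One small refinement: your ``induction on the preorder $\preccurlyeq$'' should be phrased either as inverting a unitriangular $\sZ$-matrix over the finite set $\Xienr$, or as induction on the integer-valued function $\|A\|=\sum_{i<j}{j-i\choose2}(a_{i,j}+a_{j,i})$ used in the proof of Theorem~\ref{epi}, to avoid any ambiguity about well-foundedness of a genuine preorder.
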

\begin{proof}By definition, we have $\mathcal M_\sZ\subset\UinZ$. The basis claim follows from Theorem \ref{summary} and the fact that both $\{d^\bfj\mid\bfj\in\Z^n\}$ and $\{d^\tau\big[{d\atop \la}\big]\mid \tau\in\N_2^n,\la\in\N^n\}$ form bases for $\Q(\up)[d_1^{\pm1},\ldots,d_n^{\pm1}]$ (see, e.g., the proof for \cite[Thm.~14.20]{DDPW08}).
 
 The surjectivity assertion is seen from the proof of Corollary \ref{rho4} since, for any $A\in\Xi_{2n,2r}$ and $\la=\ro(A)$,
 we have, by \eqref{k over la}, $\phi^\imath_{r,\sZ}\big(\big[{d\atop\la}\big]\big)=\big[{k\atop\la}\big]=[\ro(A)]$ and, by Theorem \ref{TR},
 $$\aligned
 \phi_{r,\sZ}^\imath(\bigg[{d\atop \la}\bigg]\fkm^{A',\bfl})&=[\ro(A)]\cdot\prod _{(i,h,j)\in(\mathscr T_{2n},\leq)}(a_{i,j}E^{\theta}_{h+1,h})(\bfl,r)\\
 &=[A]+(\text{a $\sZ$-linear comb. of $[B]$ with }B\prec A),
 \endaligned$$
 where $A'$ is the matrix obtained from $A$ by replacing its diagonal with zeros.
Thus, the set $\{\phi_{r,\sZ}^\imath(\big[{d\atop \ro(A)}\big]\fkm^{A',\bfl})\mid A\in\Xi_{2n,2r}\}$ forms a basis for $\SenrZ$. Hence, the theorem is proved.
\end{proof}

Note that, if we identify $\SenrZ$ as a $\sZ$-form of $\End_{\bsH(r)}(\Omega_{2n}^{\otimes r})$, then $\phi_{r,\sZ}^\imath$ may be identified as the restriction of the map $\rho_r^\imath$ given in \eqref{rho3} to $\UinZ$.
\begin{coro}For any commutative ring $R$ which is a $\sZ$-algebra via $\up\mapsto \sqrt{q}\in R$, the $q$-Schur algebra $\SenrR$ is a homomorphic image
of $\UinR$. In particular, the category $\SenrR$-{\bf mod} of $\SenrR$-modules is a full subcategory of the category $\UinR$-{\bf mod} of $\UinR$-modules.
\end{coro}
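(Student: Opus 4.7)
The plan is to obtain the corollary directly by base change from the surjection $\phi_{r,\sZ}^\imath:\UinZ\twoheadrightarrow\SenrZ$ established in the previous theorem, together with the standard fact that a surjection of algebras realises the target's module category as a full subcategory of the source's.

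First I would set $\UinR:=\UinZ\otimes_\sZ R$, where $R$ is a $\sZ$-algebra via $\up\mapsto\sqrt q$. Since tensor product is right exact, applying $-\otimes_\sZ R$ to the surjective $\sZ$-algebra map $\phi_{r,\sZ}^\imath$ produces a surjective $R$-algebra map
\[
\phi_{r,R}^\imath\;:\;\UinR\;=\;\UinZ\otimes_\sZ R\;\twoheadrightarrow\;\SenrZ\otimes_\sZ R.
\]
To identify the right-hand side with $\SenrR$, I would invoke the base-change identification recalled before the theorem, namely $\SenrR\cong\SenrA\otimes_\sA R$, together with the fact that $\SenrZ=\SenrA\otimes_\sA\sZ$, so that $\SenrZ\otimes_\sZ R\cong\SenrA\otimes_\sA R=\SenrR$. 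Composing yields the desired surjective $R$-algebra homomorphism $\UinR\twoheadrightarrow\SenrR$.

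For the second assertion, I would use the general categorical principle: whenever $\varphi:A\twoheadrightarrow B$ is a surjective homomorphism of $R$-algebras, the pullback functor $\varphi^\ast:B\text{-}\mathbf{mod}\to A\text{-}\mathbf{mod}$ is fully faithful. Faithfulness is immediate since the underlying $R$-modules and $R$-linear maps are unchanged. Fullness follows because any $A$-linear map $f:M\to N$ between two $B$-modules automatically satisfies $f(b.m)=f(\varphi(a).m)=\varphi(a).f(m)=b.f(m)$ for any lift $a$ of $b$ (which exists by surjectivity of $\varphi$), hence is already $B$-linear. Applied to $\varphi=\phi_{r,R}^\imath$, this exhibits $\SenrR\text{-}\mathbf{mod}$ as a full subcategory of $\UinR\text{-}\mathbf{mod}$.

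There is essentially no obstacle: the corollary is a formal consequence of the theorem, and the only point demanding care is the bookkeeping around the two integral forms $\sA=\Z[\bsq]$ and $\sZ=\Z[\up^{\pm1}]$ and the specialisation $\up\mapsto\sqrt q$, which is handled by the isomorphism $\SenrZ\otimes_\sZ R\cong\SenrR$ already recorded in the paper.
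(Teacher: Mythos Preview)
Your proposal is correct and matches the paper's intent: the paper states this corollary without proof, treating it as an immediate consequence of the preceding theorem via base change and the standard fact that a surjective algebra map makes the target's module category a full subcategory of the source's. You have simply spelled out these routine details, including the bookkeeping $\SenrZ\otimes_\sZ R\cong\SenrA\otimes_\sA R=\SenrR$, which is exactly what is needed.
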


\begin{remark}(1) If $R$ takes a member from a modular system $(\mathcal O,K,k)$, where $\mathcal O$ is a local DVR with fraction field $K$ and residue field $k$, and $q$ is a prime power and non-zero in $k$, representations of $\SenrR$ are closely related to the representations of finite symplectic group $G(q)$ in cross-characteristics, especially those in the unipotent principal series. For example, the decomposition matrix for $\SenrR$ is unitriangular and is part of the decomposition matrix $RG(q)$. See \cite[Chap. 5]{DPS} for more details. 

(2) The $\sZ$-subalgebra $\UinZ$ of $\bfUin$ contains the $\sZ$-subalgebra $U_\sZ(\mathfrak{gl}_n)$ having the same generators with all $(mE_{n,n+1}^\theta)(\bfl)$ removed. This subalgebra is the Lusztig form of $\bfU(\mathfrak{gl}_n)$ which is $\sZ$-free. It would be interesting to know if the set $\mathcal M_\sZ$ spans $\UinZ$ and hence, forms a basis for $\UinZ$. 
\end{remark}


\smallskip
{\bf Acknowledgement.} The authors would like to thank Weiqiang Wang and Yiqiang Li  for many helpful discussions during the writing of the paper.

 \end{document}